\newtheorem{lemma}{Lemma}[section]
\newtheorem{theorem}[lemma]{Theorem}
\newtheorem{proposition}[lemma]{Proposition}
\newtheorem{corollary}[lemma]{Corollary}
\theoremstyle{definition}
\newtheorem{example}[lemma]{Example}
\newtheorem{remark}[lemma]{Remark}
\newtheorem*{remark*}{Remark}
\newtheorem{mainthm}{Theorem}
\newtheorem{maincor}[mainthm]{Corollary}
\numberwithin{equation}{section}
\newcounter{constantno}
\begin{document}

\title{On differential geometry of non-degenerate CR manifolds
  \footnote{\textbf{Mathematics Subject Classification}: 32V05, 53C07, 53C25, 53C17}
  \footnote{\textbf{Keywords}: pseudo-Hermitian manifold, pseduo-K\"ahler manifold, canonical connection, Cartan-type theorem}
}
\author{Yuxin Dong\footnote{Supported by NSFC Grant No. 12171091} \and Yibin Ren}
\date{}
\maketitle

\begin{abstract}
  In this paper, 
  we consider a non-degenerate CR manifold \((M,H(M),J)\) with a given pseudo-Hermitian \(1\)-form \(\theta\), 
  and endow the CR distribution \(H(M)\) with any Hermitian metric \(h\) instead of the Levi form \(L_{\theta }\). 
  This induces a natural Riemannian metric \(g_{h,\theta }\) on \(M\) compatible with the structure. 
  The synthetic object \((M,\theta ,J,h)\) will be called a pseudo-Hermitian manifold, 
  which generalizes the usual notion of pseudo-Hermitian manifold \((M,\theta ,J,L_{\theta })\) in the literature. 
  Our purpose is to investigate the differential-geometric aspect of pseudo-Hermitian manifolds. 
  By imitating Hermitian geometry, 
  we find a canonical connection on \((M,\theta ,J,h)\), 
  which generalizes the Tanaka-Webster connection on \((M,\theta ,J,L_{\theta })\). 
  We define the pseudo-K\"{a}hler \(2\)-form by \(g_{h,\theta }\) and \(J\); and introduce the notion of a pseudo-K\"{a}hler manifold, 
  which is an analogue of a K\"{a}hler manifold. 
  It turns out that \((M,\theta ,J,L_{\theta })\) is pseudo-K\"{a}hlerian. 
  Using the structure equations of the canonical connection, 
  we derive some curvature and torsion properties of a pseudo-Hermitian manifold, 
  in particular of a pseudo-K\"{a}hler manifold. 
  Then some known results in Riemannian geometry are generalized to the pseudo-Hermitian case. 
  These results include some Cartan type results. 
  As an application, 
  we give a new proof for the classification of Sasakian space forms.  
\end{abstract}

\section*{Introduction}

CR manifolds are a type of manifold that arise in the study of several complex variables and complex geometry.
They play an important role in these fields.
The geometry of CR manifolds goes back to Poincar\'{e} and received a great attention in the works of Cartan, Tanaka, Chern-Moser, and others.
In recent decades,
many studies have been conducted on geometric analysis problems in CR manifolds (cf. \cite{jacobowitz1990cr}, \cite{dragomir2007diff}, \cite{boyer2008sasaki}, \cite{case2021lichnerowicz}, \cite{afeltra2023variation}).

A non-degenerate CR manifold is a real \((2m+1)\)-dimensional orientable manifold \(M^{2m+1}\) endowed with a non-degenerate CR structure \((H(M),J_{b})\) of hypersurface type.
The pair \((H(M),J_{b})\) consists of a rank \(2m\) subbundle of \(TM\) with a formally integrable almost complex structure.
The subbundle \(H(M)\) can be described by a global nowhere vanishing section \( \theta \) of its conormal bundle in \(T^{\ast } (M)\),
that is,
\(H(M)=\ker \theta \).
This \(1\)-form \(\theta \) is usually called a pseudo-Hermitian structure on \(M\) in the literature.
However,
we argue that this widely used terminology is not very appropriate when compared with a Hermitian structure on a complex manifold.
We prefer to call it a pseudo-Hermitian \(1\)-form and refer the term pseudo-Hermitian structures to the geometric structures that will be presented below.
The non-degenerate condition on the CR structure means that the pseudo-Hermitian \(1\)-form \(\theta \) is a contact form,
which gives rise to the Reeb vector field \(\xi \) that is transversal to \(H(M)\),
and the Levi form \(L_{\theta }\) that is a non-degenerate \(J_{b}\)-invariant symmetric bilinear form on \(H(M)\).
Then \(M\) admits a natural pseudo-Riemannian metric \( g_{\theta }\) (the Webster metric),
extending \(L_{\theta }\) in such a way that \(TM\) is an orthogonal direct sum of \(H(M)\) and \(span\{\xi \}\).
Clearly this metric becomes a Riemannian metric when the CR manifold \((M,\theta ,J_{b})\) is strictly pseudoconvex.
An important discovery in CR geometry is the canonical connection \(\nabla \) on any non-degenerate CR manifold \(M\) (the Tanaka-Webster connection),
found independently by N. Tanaka \cite{tanaka1975} and S. Webster \cite{webster1978}.
This connection is a linear connection compatible with both the metric \(g_{\theta }\) and the CR structure.

In this paper,
we consider a non-degenerate CR manifold \((M,H(M),J_b)\) with a given pseudo-Hermitian \(1\)-form \(\theta \),
and endow the vector bundle \((H(M),J_{b})\) with any Hermitian metric \(h\) instead of the Levi form \(L_{\theta }\).
From \(\theta \) and \(h\), we can define a Riemannian metric on \(M\) by
\begin{equation*}
  g_{h,\theta }=\pi_{H}^{\ast }h+\theta \otimes \theta
\end{equation*}
where \(\pi_{H}:T(M)=H(M)\oplus \mathbb{R} \xi \rightarrow H(M)\) is the natural projection.
Set \(J=\pi _{H}^{\ast }(J_{b}) \).
Henceforth we will refer to the synthetic object \((M,\theta ,J,h)\) as a pseudo-Hermitian manifold with a pseudo-Hermitian structure \(\{\theta ,J,h\}\) and to \(g_{h,\theta }\) as a pseudo-Hermitian metric on \(M\).
Our purpose is to investigate the differential-geometric aspect of pseudo-Hermitian manifolds.
Notice that our terminology "pseudo-Hermitian manifold" is more general than that in the literature,
which usually indicates the synthetic object $(M,\theta ,J,L_{\theta })$.

One effective way to study pseudo-Hermitian manifolds is to imitate the methods used to study Hermitian manifolds.
First we want to find a canonical connection for any pseudo-Hermitian manifold \((M,\theta ,J,h)\).
Applying \(J\) to the complexified tangent bundle of \(M\),
we have the following decomposition

\begin{equation*}
  T(M) \otimes \mathbb{C}
  =H^{1,0}(M) \oplus H^{0,1}(M) \oplus \mathbb{C} \xi
\end{equation*}
which induces natural projections \(\pi_{+}: T(M)\otimes \mathbb{C} \rightarrow H^{1,0}(M)\),
\(\pi_{-}=\overline{\pi_{+}}\),
and \(\pi_{0}:T(M)\otimes \mathbb{C} \rightarrow \mathbb{C} \xi \).
A natural connection on \(M\ \)should preserve the above decomposition of \(TM\otimes \mathbb{C} \) and \(g_{h,\theta }\).
This is equivalent to requiring that it makes \(J\) and \(g_{h,\theta }\) parallel.
Any connection with these two properties is called pseudo-Hermitian.
We shall determine a canonical connection among all pseudo-Hermitian connections on \(M\) by considering the components of their torsions.
Let \(\nabla \) be any pseudo-Hermitian connection on \(M\) and \( T^{\nabla }(\cdot ,\cdot )\) be its torsion that is extended complex linearly to \(TM\otimes \mathbb{C}\).
Then \(\pi_{0}(T_{\nabla }(\cdot ,\cdot ))\) is completely clear by Lemmas \ref{lem:1} and \ref{lem:2}. 
The remaining torsion information of the connection is thus contained in the \((H(M)\otimes \mathbb{C})\)-valued torsion \(2\)-form \(\pi_{H}(T_{\nabla }(\cdot ,\cdot ))\). 
We write
\begin{equation*}
  \pi_{H}(T_{\nabla }(\cdot ,\cdot ))=\Theta +\overline{\Theta }
\end{equation*}
where \(\Theta =\pi_{+}(T^{\nabla }(\cdot ,\cdot ))\). 
Let \(\tau\) be the \(H(M)\)-valued \(1\)-form defined by
\begin{equation*}
\tau (\cdot )=T^{\nabla }(\xi ,\cdot ).
\end{equation*}
Then we have the following decomposition
\begin{equation*}
\Theta +\overline{\Theta }=2\theta \wedge \tau +\Theta^{(2,0)}+\Theta^{(1,1)}+\overline{\Theta^{(2,0)}+\Theta^{(1,1)}}
\end{equation*}
where
\begin{equation*}
  \Theta^{(2,0)}=\Theta (\pi_{+},\pi_{+}), \quad
  \Theta^{(1,1)}=\Theta (\pi_{+},\pi_{-})+\Theta (\pi_{-},\pi_{+})\text{\ }
\end{equation*}
Notice that \(\Theta^{(0,2)}:=\Theta (\pi_{-},\pi_{-})=0\) due to the integrability of \(J_{b}\).

The first main theorem in this paper is the following:

\begin{mainthm}
  \label{thm:1}                 
  Let \((M^{2m+1},\theta ,J,h)\) be a pseudo-Hermitian manifold.
Set \(\sigma =\tau \circ J+J\circ \tau \).
Given any \(H^{1,0}(M)\)-valued \((1,1)\)-form \(\Psi \) on \(M\),
there is a unique pseudo-Hermitian connection \(\nabla \) such that
  \begin{equation*}
    \Theta^{(1,1)}=\Psi
  \end{equation*}
  and
  \begin{equation*}
    h(\sigma (X),Y)=-h(X,\sigma (Y))
  \end{equation*}
  for any \(X,Y\in T(M)\).
\end{mainthm}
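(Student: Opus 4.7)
The plan is to translate the problem into Cartan's first structure equation in a local unitary frame and to solve the resulting linear system for the connection $1$-forms in two stages. First I observe that any pseudo-Hermitian connection $\nabla$ automatically kills $\xi$ and preserves the splitting $TM\otimes\mathbb{C} = H^{1,0}(M)\oplus H^{0,1}(M)\oplus\mathbb{C}\xi$: since $J\xi = 0$ and $\nabla J = 0$ one has $\nabla_X\xi\in\ker J = \mathbb{R}\xi$, while $g_{h,\theta}(\xi,\xi)=1$ together with $\nabla g_{h,\theta}=0$ forces $\nabla_X\xi\perp\xi$. Picking a local $h$-unitary frame $\{Z_\alpha\}$ of $H^{1,0}(M)$ with dual coframe $\{\theta^\alpha\}$, the connection is encoded by a matrix of $1$-forms defined by $\nabla Z_\alpha = \omega_\alpha^\beta\otimes Z_\beta$, and unitarity of $h$ makes $(\omega_\alpha^\beta)$ skew-Hermitian. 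Expanding $\omega_\alpha^\beta = P_{\alpha\gamma}^\beta\theta^\gamma + Q_{\alpha\bar\gamma}^\beta\theta^{\bar\gamma} + R_\alpha^\beta\theta$, skew-Hermiticity couples $P$ to $Q$ and forces the endomorphism $R$ to be itself skew-Hermitian, so the independent unknowns are $Q$ and a skew-Hermitian $R$.

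Next I use the first Cartan equation $d\theta^\alpha + \omega_\beta^\alpha\wedge\theta^\beta = \tau^\alpha$, where $\tau^\alpha = \Theta^{(2,0),\alpha} + \Theta^{(1,1),\alpha} + 2\theta\wedge(\pi_+\tau)^\alpha$ is the $H^{1,0}$-component of the torsion (the $\pi_0$-piece being already fixed by Lemmas \ref{lem:1}--\ref{lem:2}). By the integrability of $J_b$ the form $d\theta^\alpha$ has no $(0,2)$-component, so the equation splits cleanly by bidegree. Equating $(1,1)$-parts involves only $Q$ and yields a square, explicitly invertible system whose unique solution is $Q_{\alpha\bar\gamma}^\beta = \bigl([d\theta^\beta]^{(1,1)}\bigr)_{\alpha\bar\gamma} - \Psi_{\alpha\bar\gamma}^\beta$; then $P$ is read off from skew-Hermiticity. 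The $(2,0)$-part of the Cartan equation is not a constraint at all but simply records $\Theta^{(2,0)}$, which is not free data.

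For the remaining unknown $R$ I analyze the $\theta\wedge(\cdot)$-part of the Cartan equation, which expresses the matrix of $\pi_+\tau|_{H^{1,0}}$ in the unitary frame as an affine function of $R_\alpha^\beta$. A short computation with $\sigma = \tau\circ J + J\circ\tau$ shows that on $H^{1,0}$ one has $\sigma = 2i\,\pi_+\tau|_{H^{1,0}}$, so the skew-adjointness requirement $h(\sigma X,Y) = -h(X,\sigma Y)$ is equivalent to the matrix of $\pi_+\tau|_{H^{1,0}}$ being Hermitian in the unitary frame. Pairing this Hermitian condition with the skew-Hermiticity of $R$ produces a square linear system that I solve explicitly, yielding a unique skew-Hermitian $R$ determined by the $\theta\wedge(\cdot)$-coefficients of $d\theta^\alpha$. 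Combining the two stages, all connection $1$-forms are uniquely pinned down; a routine check that the construction is equivariant under unitary changes of frame patches the local data into a globally defined $\nabla$.

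The main subtlety I expect is in the second stage: one must verify that the complex-linear constraint (skew-Hermiticity of $R$) and the real-linear constraint (skew-adjointness of $\sigma$) cut out matching real dimensions, so that the linear system on $R$ is neither over- nor under-determined. The explicit computation above exhibits this compatibility. A secondary bookkeeping issue is confirming that the $\pi_0$-part of the torsion, which is not prescribed in the theorem, is entirely fixed by Lemmas \ref{lem:1}--\ref{lem:2} and does not conflict with the choices of $Q$ and $R$; but since those lemmas depend only on $\theta$ and $J$ and not on the connection, this is automatic.
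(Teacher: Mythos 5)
Your proposal is correct, and at its core it is the paper's own proof (Theorem \ref{thm:5}) recast in moving-frame language: your three blocks of unknowns correspond exactly to the paper's three stages of the uniqueness argument. The block $Q$ (the $\theta^{\bar\gamma}$-components of $\omega_\alpha^\beta$) is the paper's formula \eqref{eqn9-1} determining $\nabla_{\overline{W}}Z$ from $\Theta^{(1,1)}=\Psi$; the block $P$, recovered from skew-Hermiticity of the connection matrix, is the Koszul-type identity \eqref{eqn9-2} determining $\nabla_XY$ from $\nabla g_{h,\theta}=0$; and the block $R$ is the determination of $\nabla_\xi$ via \eqref{eqn12} and \eqref{eqn12-2}. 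Your identification $\sigma=2\sqrt{-1}\,\pi_+\tau$ on $H^{1,0}(M)$ and the equivalence of the skew-adjointness of $\sigma$ with Hermiticity of the matrix $(A_\beta^\alpha)$ are precisely the paper's \eqref{3.9-2} and \eqref{3.9-2-1}, and your dimension count (the unique skew-Hermitian $R$ making $B+R$ Hermitian is minus the skew-Hermitian part of $B$, with $B_\gamma^\alpha=2d\theta^\alpha(\xi,\eta_\gamma)$) is the correct resolution of the one genuine subtlety; it plays the role of the conjugation check \eqref{eqn21} in the paper. Where the two treatments differ is in what they make automatic: the frame formulation gives you existence essentially for free (solve two invertible linear systems pointwise, then patch by uniqueness on overlaps), whereas the paper must verify at some length that its candidate \eqref{eqn13} preserves the decomposition \eqref{eqn8} and the metric and produces the asserted torsion (the computations \eqref{eqn15} through \eqref{eqn21}). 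In exchange, the paper's invariant formulas for $\nabla$ are reused later (e.g.\ in Lemma \ref{lem:7}), so if you adopt the frame route you should still record the resulting invariant expressions for $\nabla_{\overline{W}}Z$ and $\nabla_\xi X$.
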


By Theorem \ref{thm:1}, we have

\begin{maincor}
  \label{cor:1}                 
  On any pseudo-Hermitian manifold,
  there is a unique pseudo-Hermitian connection with \(\Theta^{(1,1)}=0\) and
  \(h(\sigma (X),Y)=-h(X,\sigma (Y))\).
\end{maincor}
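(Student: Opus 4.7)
The plan is to derive Corollary~\ref{cor:1} as an immediate specialization of Theorem~\ref{thm:1}. The hypotheses and notational setup of the corollary are identical to those of the theorem, and the extra condition $\Theta^{(1,1)} = 0$ is precisely what one obtains from Theorem~\ref{thm:1} by choosing the prescribed $(1,1)$-form $\Psi$ to be zero. So the whole proof consists of verifying that $\Psi \equiv 0$ is an admissible input and then invoking Theorem~\ref{thm:1}.

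First I would observe that the zero form is trivially an $H^{1,0}(M)$-valued $(1,1)$-form on $M$. Applying Theorem~\ref{thm:1} with this choice $\Psi \equiv 0$ then produces a unique pseudo-Hermitian connection $\nabla$ satisfying
\begin{equation*}
\Theta^{(1,1)} = 0 \quad\text{and}\quad h(\sigma(X), Y) = -h(X, \sigma(Y)) \quad \text{for all } X, Y \in T(M).
\end{equation*}
Existence is given by the existence half of Theorem~\ref{thm:1}, and uniqueness by its uniqueness half, because any connection satisfying the corollary's conditions is in particular a pseudo-Hermitian connection realizing $\Psi = 0$.

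There is essentially no obstacle here: the corollary is a one-line specialization. The real work is packaged into Theorem~\ref{thm:1}, where both the construction of the connection (out of the pseudo-Hermitian data $\theta$, $J$, $h$, the prescribed $\Psi$, and the skew-symmetry condition on $\sigma$) and the rigidity argument forcing uniqueness must be carried out. Corollary~\ref{cor:1} simply records the distinguished case $\Psi = 0$, which selects the most natural canonical connection on $(M, \theta, J, h)$ and will serve in the sequel as the analogue of the Tanaka--Webster connection in this more general pseudo-Hermitian setting.
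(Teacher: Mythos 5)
Your proposal is correct and matches the paper exactly: the paper derives Corollary \ref{cor:1} (restated in the body as Corollary \ref{cor:3}) as the immediate specialization $\Psi \equiv 0$ of Theorem \ref{thm:1} (Theorem \ref{thm:5}), with all the substantive work residing in the theorem's existence and uniqueness argument.
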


The connection in Corollary \ref{cor:1} will be referred to as the canonical pseudo-Hermitian connection of \(M\).
It is easy to see that if \(h=L_{\theta }\),
the canonical pseudo-Hermitian connection coincides with the Tanaka-Webster connection.

Analogous to the case of Hermitian manifolds,
we define the pseudo-K\"{a}hler form of \((M^{2m+1},\theta ,J,h)\) by \(\Omega _{h,\theta }=g_{h,\theta}(J\cdot ,\cdot )\).
Then \(M\) is said to be of pseudo-K\"{a}hler type or simply a pseudo-K\"{a}hler manifold if \(d\Omega _{h,\theta }=0\).
It is easy to see that if \(h=L_{\theta }\),
then \(\Omega _{L_{\theta },\theta }=d\theta \).
This implies that \((M,\mu \theta ,J,\lambda L_{\theta })\) for any positive constants \(\lambda \) and \(\mu \) is of pseudo-K\"{a}hler type.
Besides,
we find that any non-degenerate Hopf hypersurface with the induced pseudo-Hermitian structure in a K\"{a}hler manifold is automatically a pseudo-K\"{a}hler manifold,
and the pseudo-Hermitian metric \(g_{h,\theta }\) is not the Webster metric in general.

In this paper a pseudo-Hermitian manifold is always endowed with its canonical pseudo-Hermitian connection.
From this connection,
we have the corresponding curvature tensor,
and various related notions of curvature,
including horizontal sectional curvature,
pseudo-Hermitian sectional curvature,
pseudo-Hermitian Ricci curvature and scalar curvature.
It is useful to derive the structure equations of the connection,
which yield some relations between the curvature and torsion components.
Using the structure equations,
we obtain the following result.

\begin{mainthm}
  \label{thm:2}                 
  Let \((M^{2m+1},\theta ,J,h)\) be a pseudo-Hermitian manifold.
  Then \(M\) is of pseudo-K\"{a}hler type if and only if \(\Theta^{(2,0)}=0\),
  \(\sigma =0\) and \(\tau \) is symmetric with respect to \(g_{h,\theta }\).
\end{mainthm}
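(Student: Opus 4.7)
My plan is to compute $d\Omega_{h,\theta}$ directly in terms of the torsion of the canonical connection $\nabla$, and then extract the stated equivalent conditions by testing this on type-decomposed arguments. Since $\nabla g_{h,\theta}=0$ and $\nabla J=0$, the form $\Omega_{h,\theta}$ is $\nabla$-parallel, so rewriting Lie brackets through $T^{\nabla}$ in the standard bracket formula for $d\Omega_{h,\theta}$ yields
\[
d\Omega_{h,\theta}(X,Y,Z)=\Omega_{h,\theta}(T^{\nabla}(X,Y),Z)+\Omega_{h,\theta}(T^{\nabla}(Y,Z),X)+\Omega_{h,\theta}(T^{\nabla}(Z,X),Y).
\]
Two preliminary simplifications follow. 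First, because $\nabla$ preserves $\mathbb{R}\xi$ and $g_{h,\theta}(\xi,\xi)=1$, one has $\nabla\xi=0$, hence $\nabla\theta=0$ and $\theta\circ T^{\nabla}=d\theta$; but $J\xi=0$ forces $\Omega_{h,\theta}(\xi,\cdot)=0$, so the $\xi$-valued part of the torsion contributes nothing and only $\pi_{H}T^{\nabla}=\Theta+\overline{\Theta}$ together with $\tau$ enters. Second, $J$-invariance of $h$ gives $h(H^{1,0},H^{1,0})=0$, and from Corollary~\ref{cor:1} I may freely use $\Theta^{(1,1)}=0$, the skew-Hermiticity of $\sigma$, and the CR-integrability relation $\Theta^{(0,2)}=0$.

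The main body of the argument is a case analysis of $d\Omega_{h,\theta}(X,Y,Z)=0$ over the types of $(X,Y,Z)\in\{H^{1,0}(M),H^{0,1}(M),\mathbb{C}\xi\}^{3}$. The all-$H^{1,0}$ (and conjugate) case is automatic, since $\Theta^{(2,0)}(X,Y)\in H^{1,0}$ is killed by $h$ against $H^{1,0}$. The case $X,Y\in H^{1,0},Z\in H^{0,1}$ collapses, after $\Theta^{(1,1)}=0$ eliminates two cyclic terms, to $d\Omega_{h,\theta}(X,Y,Z)=i\,h(\Theta^{(2,0)}(X,Y),Z)$, whose vanishing is equivalent to $\Theta^{(2,0)}=0$ by non-degeneracy of the $H^{1,0}\times H^{0,1}$ pairing. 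The two subcases with one $\xi$-argument are the heart of the matter: for $X,Y\in H^{1,0},Z=\xi$, expanding with $J\tau(V)=i\,\tau^{1,0}(V)-i\,\tau^{0,1}(V)$ (the $H^{1,0}$- and $H^{0,1}$-valued components of $\tau$) produces $h(\tau^{0,1}(X),Y)=h(\tau^{0,1}(Y),X)$, equivalently $\tau$ is $g_{h,\theta}$-symmetric on $H^{1,0}\times H^{1,0}$; while for $X\in H^{1,0},Y\in H^{0,1},Z=\xi$ one obtains $h(\tau^{0,1}(Y),X)+h(\tau^{1,0}(X),Y)=0$, and combining this with the canonical-connection identity $h(\tau^{1,0}(X),Y)=h(\tau^{0,1}(Y),X)$ (extracted from $h(\sigma X,Y)=-h(X,\sigma Y)$ via $\sigma|_{H^{1,0}}=2i\,\tau^{1,0}|_{H^{1,0}}$) forces $\tau^{1,0}|_{H^{1,0}}=0$, i.e.\ $\sigma=0$. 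Conjugate cases produce no new conditions, and reversing all implications gives the converse.

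The main obstacle I anticipate is bookkeeping rather than anything conceptual: four torsion components $\Theta^{(2,0)},\Theta^{(1,1)},\tau^{1,0},\tau^{0,1}$ interact through the cyclic sum, and since $h$ extended complex-bilinearly vanishes on pure-type pairs but is non-degenerate on mixed ones, vanishings and signs must be tracked carefully. The subtlest step is the mixed-type case $(X\in H^{1,0},Y\in H^{0,1},Z=\xi)$: on its own it only relates $\tau^{1,0}$ and $\tau^{0,1}$ on opposite-type arguments, and only after feeding in the canonical-connection skew-Hermiticity of $\sigma$ does it upgrade to $\sigma=0$. I would also want to confirm that ``$\tau$ symmetric with respect to $g_{h,\theta}$'' splits by type into a nontrivial piece on $H^{1,0}\times H^{1,0}$ (the new condition from the $(X,Y,\xi)$ case) and a mixed-type piece that is already implicit in the canonical-connection axiom, so that the three listed conditions $\Theta^{(2,0)}=0$, $\sigma=0$, and symmetry of $\tau$ are genuinely independent and jointly characterize $d\Omega_{h,\theta}=0$.
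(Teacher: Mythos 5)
Your proposal is correct and follows essentially the same route as the paper: the paper computes \(d\Omega_{h,\theta}\) from the expression \(\Omega_{h,\theta}=2\sqrt{-1}\sum_{\alpha}\theta^{\alpha}\wedge\theta^{\overline{\alpha}}\) via the first structure equations (where the connection forms cancel by skew-Hermitian symmetry, leaving exactly the torsion terms), and then reads off the same type-by-type conditions \(A^{\alpha}_{\beta}=0\), \(A_{\alpha\beta}=A_{\beta\alpha}\), \(T^{\alpha}_{\beta\gamma}=0\) that you extract from your case analysis. Your invariant identity \(d\Omega_{h,\theta}=\sum_{\mathrm{cyc}}\Omega_{h,\theta}(T^{\nabla}(\cdot,\cdot),\cdot)\) for the \(\nabla\)-parallel form is just the frame-free version of that computation, and your use of \(h(\sigma X,Y)=-h(X,\sigma Y)\) to upgrade the mixed-type relation to \(\sigma=0\) matches the paper's use of \eqref{3.9-2-1}.
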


\begin{remark*}
  In \cite{dragomir2007diff},
  the authors introduced the so-called pure condition to describe the Tanaka-Webster connection on a non-degenerate CR manifold \((M,\theta ,J,L_{\theta })\).
  We find that a pseudo-Hermitian connection on a general pseudo-Hermitian manifold \((M^{2m+1},\theta ,J,h)\) satisfies the pure condition if and only if \(\Theta^{(2,0)}=\Theta^{(1,1)}=\Theta^{(0,2)}=0\) and \(\sigma =0\).
  Hence the above theorem shows that the canonical connection of a pseudo-K\"{a}hler manifold satisfies the pure condition too.
\end{remark*}

The geometric meaning of \(\tau =0\) is further explained by the following theorem,
which generalizes a known result for a non-degenerate CR manifold
\((M,\theta ,J,L_{\theta })\) (cf. \cite{webster1978}, \cite{dragomir2007diff}).

\begin{mainthm}
  \label{thm:3}                 
  Let \((M^{2m+1},\theta ,J,h)\) be a pseudo-Hermitian manifold of pseudo-K\"{a}hler type.
  Then the following conditions are equivalent:

  \begin{enumerate}[(i)]
  \item \(\tau =0\).

  \item \(\xi \) is an infinitesimal CR automorphism.

  \item \((M,\mathcal{F}_{\xi },h)\) is a Riemannian foliation,
    where \(\mathcal{F}_{\xi }\) denotes the Reeb foliation determined by \(\xi \).
  \end{enumerate}
\end{mainthm}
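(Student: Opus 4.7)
The plan is to translate conditions (ii) and (iii) into algebraic conditions on the torsion component $\tau$ by computing the Lie derivatives $\mathcal{L}_\xi J$ and $\mathcal{L}_\xi g_{h,\theta}$, and then to invoke the pseudo-K\"ahler constraints from Theorem \ref{thm:2} (namely $\sigma=0$ and symmetry of $\tau$) to collapse both computations to the same condition $\tau=0$.

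First I would verify that the canonical pseudo-Hermitian connection satisfies $\nabla \xi = 0$ (and hence $\nabla \theta = 0$). This should follow from the defining properties in Theorem \ref{thm:1}: since $\nabla$ preserves the decomposition $TM\otimes \mathbb{C} = H^{1,0}(M)\oplus H^{0,1}(M)\oplus \mathbb{C}\xi$, we have $\nabla \xi \in \mathbb{C}\xi$; together with $\nabla g_{h,\theta}=0$ and $g_{h,\theta}(\xi,\xi)=1$ this forces $\nabla \xi = 0$. Consequently, for every $X\in TM$,
\[
\tau(X) = T^\nabla(\xi,X) = \nabla_\xi X - [\xi,X].
\]

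For (i)$\Leftrightarrow$(ii): using $\nabla J=0$ and the identity above,
\[
(\mathcal{L}_\xi J)(X) = [\xi,JX]-J[\xi,X] = J\tau(X)-\tau(JX).
\]
On a pseudo-K\"ahler manifold, Theorem \ref{thm:2} gives $\sigma = \tau\circ J+J\circ \tau =0$, so $J\tau=-\tau J$ and the previous display becomes $(\mathcal{L}_\xi J)(X) = -2\tau(JX)$. Since $\tau$ is $H(M)$-valued and $J$ restricts to an automorphism of $H(M)$, the vanishing of $\mathcal{L}_\xi J$ on $H(M)$ is equivalent to $\tau=0$. Combined with $\mathcal{L}_\xi \theta = \iota_\xi d\theta + d(\theta(\xi))=0$, which forces $\mathcal{L}_\xi H(M)\subset H(M)$ automatically, this is exactly the statement that $\xi$ is an infinitesimal CR automorphism.

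For (i)$\Leftrightarrow$(iii): for $X,Y\in H(M)$, using $\nabla g_{h,\theta}=0$,
\[
(\mathcal{L}_\xi g_{h,\theta})(X,Y) = g_{h,\theta}(\tau X,Y)+g_{h,\theta}(X,\tau Y) = h(\tau X,Y)+h(X,\tau Y).
\]
Since $\tau$ is $g_{h,\theta}$-symmetric in the pseudo-K\"ahler case (Theorem \ref{thm:2}), the right-hand side equals $2h(\tau X,Y)$, and non-degeneracy of $h$ gives $(\mathcal{L}_\xi g_{h,\theta})|_{H(M)\times H(M)}=0$ if and only if $\tau=0$. This vanishing of the horizontal component of $\mathcal{L}_\xi g_{h,\theta}$ is precisely the bundle-like condition stating that the Reeb foliation $\mathcal{F}_\xi$ is Riemannian with respect to $h$.

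The main obstacle will be conceptual rather than computational: rigorously justifying $\nabla\xi=0$ from the abstract characterization in Corollary \ref{cor:1}, and carefully matching the definition of a Riemannian foliation to the transverse metric induced by $h$ rather than by $L_\theta$. Once those are in place, the equivalences follow from the two three-line calculations above.
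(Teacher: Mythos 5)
Your argument is correct, and the (i)$\Leftrightarrow$(ii) step is essentially the paper's own: the identity $(\mathcal{L}_\xi J)(X)=J\tau(X)-\tau(JX)$, combined with $\sigma=0$, is exactly the content of Lemma \ref{lem:7} (derived in the paper from \eqref{eqn11} and \eqref{eqn15}), and your justification of $\nabla\xi=0$ reproduces Lemma \ref{lem:2}. The only genuine divergence is in how you reach (iii). The paper links (ii) and (iii) directly, differentiating $\Omega_{h,\theta}(X,Y)=h(JX,Y)$ to get $(\mathcal{L}_\xi h)(JX,Y)=(\mathcal{L}_\xi\Omega_{h,\theta})(X,Y)-h((\mathcal{L}_\xi J)X,Y)=-h((\mathcal{L}_\xi J)X,Y)$, so that $\mathcal{L}_\xi h=0$ iff $\mathcal{L}_\xi J=0$; the only input is $i_\xi d\Omega_{h,\theta}=0$ (equivalently $\mathcal{L}_\xi\Omega_{h,\theta}=0$), which is why Proposition \ref{prp:1} is stated under that weaker hypothesis. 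You instead link (i) and (iii) via $(\mathcal{L}_\xi g_{h,\theta})(X,Y)=h(\tau X,Y)+h(X,\tau Y)$ and the $g_{h,\theta}$-symmetry of $\tau$ from Theorem \ref{thm:2}. Both are two-line computations and both inputs ($\sigma=0$ plus symmetry of $\tau$, versus $i_\xi d\Omega_{h,\theta}=0$) are equivalent by Theorem \ref{thm:6}(i), so nothing is lost; the paper's version isolates the minimal hypothesis, while yours keeps everything phrased in terms of the torsion. One small point to make explicit in a final write-up: you should note (as the paper does via Lemma \ref{lem:2}(ii)) that $\tau$ is $H(M)$-valued, which is what lets you conclude $\tau=0$ from $\tau\circ J=0$ on $H(M)$ together with $\tau(\xi)=0$.
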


A strictly pseudoconvex CR manifold \((M,\theta ,J,L_{\theta })\) with positive definite Levi form \(L_{\theta }\) and \(\tau =0\) is called Sasakian.
Thus a pseudo-K\"{a}hler manifold with \(\tau =0\) may be regarded as a generalization of Sasakian manifolds.

It is known that sectional curvatures of Riemannian manifolds or
holomorphic sectional curvatures of K\"{a}hler manifolds
determine their curvature tensors completely (cf. \cite{kobayashi1996diff1,kobayashi1996diff}).
These results can be generalized as follows.

\begin{mainthm}
  \label{thm:4}                 
  Let \((M^{2m+1},\theta ,J,h)\) be a pseudo-K\"{a}hler manifold with \(\tau =0\).
  Then the pseudo-Hermitian sectional curvatures of \( M \) determine the curvature tensor completely.
\end{mainthm}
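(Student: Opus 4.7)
My plan is to adapt the classical Kobayashi--Nomizu polarization argument showing that on a K\"ahler manifold the holomorphic sectional curvature determines the full curvature tensor. The proof splits into assembling the curvature symmetries of the canonical connection \(\nabla\), reducing the data to the horizontal piece \(R|_{H(M)^{\otimes 4}}\), and then applying an algebraic polarization lemma.

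First, the pseudo-Hermitian property \(\nabla J = 0\) and \(\nabla g_{h,\theta} = 0\) gives the antisymmetries \(R(X,Y,Z,W) = -R(Y,X,Z,W) = -R(X,Y,W,Z)\) and the \(J\)-invariance \(R(X,Y,JZ,JW) = R(X,Y,Z,W)\) on all of \(TM\). Preserving the decomposition \(TM = H(M) \oplus \mathbb{R}\xi\) together with \(\nabla g_{h,\theta} = 0\) forces \(\nabla\xi = 0\), hence \(R(X,Y)\xi = 0\) and \(R(X,Y)Z\) is always horizontal; consequently \(R(X,Y,Z,W) = 0\) whenever either of the last two slots equals \(\xi\), and the only data beyond \(R|_{H(M)^{\otimes 4}}\) lies in the slice \(R(\xi,Y)Z\) with \(Y,Z \in H(M)\). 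Under our hypotheses, Theorem~\ref{thm:2} combined with \(\tau = 0\) gives \(\Theta^{(2,0)} = 0\), and the canonical connection always satisfies \(\Theta^{(1,1)} = 0 = \Theta^{(0,2)}\), so the torsion of \(\nabla\) reduces to \(T(X,Y) \in \mathbb{R}\xi\) for horizontal \(X,Y\) and \(T(\xi,\cdot) = 0\). Expanding the first Bianchi identity for a torsion connection and projecting to \(H(M)\), every cyclic torsion contribution is \(\xi\)-valued and drops out, so \(R\) satisfies the first Bianchi on \(H(M)\); combined with the earlier symmetries this promotes to the pairing symmetry \(R(X,Y,Z,W) = R(Z,W,X,Y)\) on horizontal vectors. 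Thus \(R|_{H(M)^{\otimes 4}}\) carries all four K\"ahler-type curvature symmetries.

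Next I would show that the slice \(R(\xi,Y)Z\) is determined by \(R|_{H(M)^{\otimes 4}}\). Applying the first Bianchi identity to the triple \((\xi,Y,Z)\) and using \(T(\xi,\cdot) = 0\) with \(R(Y,Z)\xi = 0\) yields the symmetry \(R(\xi,Y)Z = R(\xi,Z)Y\). Applying the second Bianchi identity to the same triple gives \((\nabla_\xi R)(Y,Z) + (\nabla_Y R)(Z,\xi) + (\nabla_Z R)(\xi,Y) = 0\), and combining this with the pseudo-K\"ahler hypothesis \(d\Omega_{h,\theta} = 0\) should express \(R(\xi,Y)Z\) in terms of the horizontal curvature \(R|_{H(M)^{\otimes 4}}\) and its covariant derivatives, thereby completing the reduction.

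With the reduction accomplished, the classical Kobayashi--Nomizu algebraic polarization lemma applies: two \(4\)-tensors on the Hermitian vector space \((H_pM, J, h)\) satisfying the four K\"ahler curvature symmetries and agreeing on all values \(R(X,JX,X,JX)\) must coincide; substituting \(X \mapsto X + tY\) and \(X \mapsto X + tJY\), expanding in \(t\), and extracting coefficients recovers each individual component of \(R|_{H(M)^{\otimes 4}}\). I expect the main obstacle to lie in the second step: because \(\nabla\) carries essential torsion along \(\xi\) coming from \(d\theta\), eliminating the \(R(\xi,\cdot)\) slice is strictly more delicate than in the K\"ahler case and appears to genuinely require both \(\tau = 0\) and the pseudo-K\"ahler condition to close the Bianchi identities.
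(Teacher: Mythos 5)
Your overall architecture matches the paper's: reduce to the horizontal curvature tensor \(R^{H}\), check that it carries the K\"ahler curvature symmetries, and invoke the Kobayashi--Nomizu polarization lemma. The first and third steps are sound and are essentially what the paper does (the paper obtains the K\"ahler-type symmetries of \(R^{H}\) from Lemma \ref{lem:5} rather than from an invariant Bianchi computation, but this is only a difference of bookkeeping).

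The genuine gap is in your second step, the elimination of the mixed slice \(R(\xi ,Y)Z\). What is actually true --- and what the paper proves --- is that these components \emph{vanish identically}: by Lemma \ref{lem:5}, \(R_{\beta 0\mu }^{\alpha }\) and \(R_{\beta 0\overline{\mu }}^{\alpha }\) are expressed purely in terms of the torsion coefficients \(A_{\beta }^{\alpha },A_{\overline{\beta }}^{\alpha }\) and their covariant derivatives, all of which are zero when \(\tau =0\); this yields \eqref{4.8}, so the full curvature equals its horizontal restriction. Your argument does not reach this. The real first Bianchi identity applied to \((\xi ,Y,Z)\) only returns the cyclic sum, hence only the symmetrized information \(R(\xi ,Y)Z=R(\xi ,Z)Y\), and your appeal to the second Bianchi identity cannot close the gap: that identity relates \(\nabla_{Y}\bigl(R(Z,\xi )\bigr)\) to \(\nabla_{\xi }\) of the horizontal curvature, i.e.\ it is a differential relation, whereas the theorem asserts a pointwise algebraic determination of \(R\) by the sectional curvature function; moreover ``determined by derivatives of \(R^{H}\)'' would not be the conclusion you need even if it held. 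The fix stays entirely within the \emph{first} Bianchi identity but requires the complex type decomposition rather than the real splitting \(H(M)\oplus \mathbb{R}\xi \): apply it to a mixed-type triple \((\xi ,Z,\overline{W})\) with \(Z,W\in H^{1,0}(M)\) and project by \(\pi_{+}\). Since the canonical connection preserves \(H^{1,0}(M)\) and \(H^{0,1}(M)\) separately, \(\pi_{+}\bigl(R(\xi ,Z)\overline{W}\bigr)=0\) and \(R(Z,\overline{W})\xi =0\), so the cyclic sum isolates the single term \(\pi_{+}\bigl(R(\overline{W},\xi )Z\bigr)\); the torsion source terms all carry a factor of \(\tau \) or are \(\xi \)-valued, hence vanish under your hypotheses, giving \(R_{\beta 0\overline{\mu }}^{\alpha }=0\) (and conjugation/type symmetry gives \(R_{\beta 0\mu }^{\alpha }=0\)). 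This is precisely the content of equations \eqref{3.17}--\eqref{3.18} and Lemma \ref{lem:5} in the paper. You correctly sensed that this step was the delicate one; the missing idea is to exploit the finer splitting \(H^{1,0}\oplus H^{0,1}\) so that the first Bianchi identity pins down individual components instead of cyclic sums.
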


\begin{maincor}
  \label{cor:2}                 
  The pseudo-Hermitian sectional curvatures of a Sasakian manifold \((M^{2m+1},\theta ,J,L_{\theta })\) determine the curvature tensor completely.
\end{maincor}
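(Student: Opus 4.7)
The plan is to derive Corollary \ref{cor:2} as an immediate specialization of Theorem \ref{thm:4}, with the only work being a verification that every Sasakian manifold fits the hypotheses of that theorem.

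By the definition recalled just before Theorem \ref{thm:4}, a Sasakian manifold $(M^{2m+1},\theta,J,L_{\theta})$ is a strictly pseudoconvex CR manifold whose Hermitian metric on $H(M)$ is the positive definite Levi form $L_{\theta}$ and which satisfies $\tau=0$. Thus the torsion hypothesis $\tau=0$ of Theorem \ref{thm:4} is built into the definition. It remains to check that the pseudo-K\"ahler condition $d\Omega_{h,\theta}=0$ holds when $h=L_{\theta}$. But this was already observed in the discussion following the introduction of the pseudo-K\"ahler form: in this case $\Omega_{L_{\theta},\theta}=d\theta$, and so $d\Omega_{L_{\theta},\theta}=d(d\theta)=0$. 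Hence every Sasakian manifold is of pseudo-K\"ahler type with $\tau=0$.

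One should also observe, as noted in the text, that for $h=L_{\theta}$ the canonical pseudo-Hermitian connection of Corollary \ref{cor:1} coincides with the Tanaka-Webster connection. Consequently, the curvature tensor and pseudo-Hermitian sectional curvatures entering Theorem \ref{thm:4} agree with the classical Tanaka-Webster objects on a Sasakian manifold, so there is no ambiguity in the statement of Corollary \ref{cor:2}. With both hypotheses of Theorem \ref{thm:4} now verified, an application of that theorem yields the corollary. No genuine obstacle arises here; the entire substance is contained in Theorem \ref{thm:4}, and the corollary amounts to a translation of its hypotheses into the classical Sasakian vocabulary.
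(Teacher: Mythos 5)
Your proposal is correct and follows exactly the route the paper intends: Corollary \ref{cor:2} is stated as an immediate consequence of Theorem \ref{thm:4}, with the Sasakian hypotheses ($h=L_{\theta}$ strictly pseudoconvex, $\tau=0$) giving $\Omega_{L_{\theta},\theta}=d\theta$ and hence $d\Omega_{L_{\theta},\theta}=0$, so the manifold is pseudo-K\"ahler with $\tau=0$. Your additional remark that the canonical connection reduces to the Tanaka--Webster connection in this case matches the paper's own observation and removes any ambiguity about which curvature tensor is meant.
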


Theorem \ref{thm:4} also yields that the curvature tensor of a pseudo-K\"{a}hler manifold \(M\)
with \(\tau =0\) and constant pseudo-Hermitian sectional curvature has a unique expression.

A theorem of Cartan (\cite{cartan1951lecon}) tells us that the metric of a Riemannian manifold is,
in some sense,
determined locally by its curvature.
To generalize this result to the pseudo-Hermitian case,
we first introduce the notion of pseudo-Hermitian isometry.
A diffeomorphism \(f: (M,\theta ,J,h)\rightarrow (N,\widehat{\theta },\widehat{J},\widehat{h})\) between two pseudo-Hermitian manifolds is called a \emph{pseudo-Hermitian isometry} if \(f^{\ast }g_{\widehat{h},\widehat{\theta }}=g_{h,\theta }\) and \(f^{\ast }\widehat{J}=J\).
Next,
we present the theory of geodesics of the canonical connection on a pseudo-Hermitian manifold,
which includes the exponential map and Jacobi fields with respect to the canonical connection.
Then we are able to establish a Cartan-type result for pseudo-Hermitian manifolds that can be briefly described as follows (see \S \ref{sec:cartan-type-theorems} for the detailed statements).

\begin{mainthm}
  \label{thm:13}                
  The pseudo-Hermitian structure \(\{\theta ,J,h\}\) of a pseudo-Hermitian manifold is determined locally by the curvature and torsion of the canonical connection.
\end{mainthm}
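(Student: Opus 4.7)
The plan is to mirror the classical proof of Cartan's Riemannian rigidity theorem, replacing the Levi-Civita connection with the canonical pseudo-Hermitian connection and adapting the Jacobi field theory developed earlier in the paper to the torsion-bearing setting. A useful preliminary remark is that any linear map $F$ preserving $g_{h,\theta}$ and $J$ automatically preserves $\theta$ and $h$, since the Reeb vector $\xi$ is characterized as a unit vector spanning $\ker J$ with sign fixed by $\theta$; so throughout I work with the pair $(g_{h,\theta},J)$.

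Given pseudo-Hermitian manifolds $(M^{2m+1},\theta,J,h)$ and $(N^{2m+1},\widehat{\theta},\widehat{J},\widehat{h})$, base points $p\in M$, $q\in N$, and a pseudo-Hermitian linear isomorphism $F\colon T_pM\to T_qN$ that intertwines the curvature and torsion tensors of the canonical connections at the base points, the candidate local isometry is
\[
f=\exp_q^{\widehat{\nabla}}\circ F\circ(\exp_p^{\nabla})^{-1}
\]
on a normal neighborhood of $p$ on which the canonical-connection exponential map is a diffeomorphism. I then extend $F$ radially by parallel transport: for $v\in T_pM$, with $\gamma_v(t)=\exp_p^{\nabla}(tv)$ and $\widehat{\gamma}(t)=\exp_q^{\widehat{\nabla}}(tF(v))$, set
\[
F_{\gamma_v(t)}=\widehat{P}_t\circ F\circ(P_t^{v})^{-1}\colon T_{\gamma_v(t)}M\longrightarrow T_{\widehat{\gamma}(t)}N,
\]
where $P^{v}$ and $\widehat{P}$ denote parallel translation under the canonical connections. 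Because both connections are pseudo-Hermitian, each $F_{\gamma_v(t)}$ automatically preserves $g_{h,\theta}$ and $J$, and the theorem reduces to the Cartan identity $df_{\gamma_v(t)}=F_{\gamma_v(t)}$ along every radial geodesic.

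The Cartan identity is obtained by Jacobi field matching. A variation $\exp_p^{\nabla}(t(v+sw))$ produces the variation field $Y(t)=(d\exp_p^{\nabla})_{tv}(tw)$, which is a Jacobi field for $\nabla$ with $Y(0)=0$ and $\nabla_{\dot{\gamma}}Y(0)=w$. Derived from the commutator $[\partial_s,\partial_t]=0$ together with the structure equations of the preceding section, the Jacobi equation for the canonical connection reads
\[
\nabla_{\dot{\gamma}}\nabla_{\dot{\gamma}}Y+\nabla_{\dot{\gamma}}\bigl(T(Y,\dot{\gamma})\bigr)+R(Y,\dot{\gamma})\dot{\gamma}=0.
\]
Under the curvature/torsion correspondence along $\gamma_v$ provided by $F_{\gamma_v(t)}$, and using that $F_{\gamma_v(t)}$ commutes with parallel translation by construction, the image $F_{\gamma_v(t)}(Y(t))$ solves the corresponding Jacobi equation on $N$ with matching initial conditions. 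Uniqueness of solutions gives $F_{\gamma_v(t)}(Y(t))=df_{\gamma_v(t)}(Y(t))$; since such Jacobi fields span $T_{\gamma_v(t)}M$ for small $t>0$, this yields the identity and hence a local pseudo-Hermitian isometry.

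The principal obstacle is exactly the curvature/torsion correspondence along geodesics needed to align the two Jacobi equations. At the base point it is a hypothesis; along each radial geodesic it has to be propagated, which forces the standard Cartan ansatz that $F$ preserve all iterated covariant derivatives $\nabla^{k}R$ and $\nabla^{k}T$ at $p$ (automatic in the real-analytic category, and otherwise imposed as part of the data along the ray). A secondary technical difficulty is the careful bookkeeping of torsion contributions --- in the Jacobi equation above, in the commutation of $\nabla_{\dot{\gamma}}$ with curvature, and in the identification of $\nabla\xi$ through $\tau$ --- but once cast in the complex-tensorial language of the canonical connection, these computations parallel their Riemannian counterparts rather closely.
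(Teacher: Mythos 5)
Your proposal follows essentially the same route as the paper: the map $f=\exp_q^{\widehat{\nabla}}\circ F\circ(\exp_p^{\nabla})^{-1}$, the radial extension of $F$ by parallel transport of the canonical connections, the torsion-modified Jacobi equation (yours is equivalent to the paper's $\nabla_T\nabla_T V=R(T,V)T+(\nabla T_{\nabla})(T,V;T)+T_{\nabla}(T,\nabla_T V)$ after using $\nabla_{\dot\gamma}\dot\gamma=0$ and the antisymmetry of $R$), and the ODE-uniqueness matching of Jacobi fields written in radially parallel frames. The one place where you diverge is the formulation of the hypothesis. You assume the curvature/torsion intertwining only at the base points and then observe, correctly, that it must be propagated along each radial geodesic, proposing to do so via preservation of all iterated covariant derivatives $\nabla^k R$, $\nabla^k T$ at $p$ or via real-analyticity. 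The paper instead builds the propagation into the hypothesis itself (Theorem 5.10 / Corollary 5.11): the curvature and torsion are assumed to correspond under $\phi_t$ at \emph{every} point $q$ of the normal neighborhood, exactly as in do Carmo's formulation of Cartan's theorem. With that hypothesis, the correspondence of $(\nabla_{\dot\gamma}T_{\nabla})(\dot\gamma,\cdot)$ along each geodesic follows from a one-line differentiation in $t$ (the paper's Lemma 5.9, which only uses the correspondence restricted to the geodesic), and no analyticity or infinite-jet assumption is needed. So your fallback ``imposed as part of the data along the ray'' is in fact the paper's hypothesis, and you should state it that way from the outset; with the base-point-only hypothesis the theorem is false in the smooth category. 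A minor further point: preserving $g_{h,\theta}$ and $J$ only forces $df(\xi)=\pm\widehat{\xi}$, so to recover $\theta$ (rather than $-\theta$) one must additionally impose $F(\xi_p)=\widehat{\xi}_q$, as the paper does in Corollary 5.11 and Theorem 5.12.
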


We would like to point out that for pseudo-Hermitian manifolds
whose exponential maps are global diffeomorphisms,
their pseudo-Hermitian structures are determined globally by their curvatures and torsions.
In \cite{tanno1969sasak},
Tanno showed that Sasakian space forms are isomorphic to three model spaces.
However,
his proof used some analytic hypothesis for both \(M\) and \(g_{\theta }\).
As an application of Theorem \ref{thm:13},
we will give a new proof for this classification result.

\section{Pseudo-Hermitian structures on CR manifolds} \label{sec:pseudo-herm-struct}

Let \(M^{2m+1}\) be a \((2m+1)\)-dimensional smooth orientable manifold.
A \emph{CR structure} on \(M^{2m+1}\) is a complex rank-\(m\) subbundle \(H^{1,0}(M)\) of the complexified tangent bundle \(T(M)\otimes \mathbb{C}\) satisfying \( H^{1,0}(M)\cap H^{0,1}(M)=\{0\}\) (\(H^{0,1}(M)=\overline{H^{1,0}(M)}\)) and the Frobenius integrability property
\begin{equation}
  \bigl[ \Gamma (H^{1,0}(M),\Gamma (H^{1,0}(M) \bigr]
  \subseteq \Gamma (H^{1,0}(M)).  \label{eqn0-1}
\end{equation}
The complex subbundle \(H^{1,0}(M)\) corresponds to a real rank \(2m\) subbundle
\(H(M) := \operatorname{Re} \{H^{1,0}(M)\oplus H^{0,1}(M)\}\)
of \(T(M)\),
which carries a complex structure \(J_{b}\) defined by
\begin{equation*}
  J_{b}(Z+\overline{Z})=i(Z-\overline{Z})
\end{equation*}
for any \(Z\in H^{1,0}(M)\).
Hence the CR structure can also be described equivalently by the pair \((H(M),J_{b})\).
The synthetic object
\( (M,H^{1,0}(M)) \) or \((M,H(M),J_{b})\) is called a \emph{CR manifold}.

The conormal bundle \(E\) of \(H(M)\) in \(T^{\ast }M\) is a real line bundle on \( M \),
whose fiber at each point \(x\in M\) is given by
\begin{equation*}
  E_{x}=\{\omega \in T_{x}^{\ast }M:\ker \omega \supseteq H_{x}(M)\}.
\end{equation*}
Since the CR distribution \(H(M)\) is orientable due to the complex structure \(J_{b}\),
it follows from the orientability of \(M\) that the real line bundle \(E\) is also orientable,
so \(E\) has globally defined nowhere vanishing sections.
Any such a section
\(\theta \in \Gamma (E\backslash \{0\})\)
is referred to as a \emph{pseudo-Hermitian \(1\)-form} on \(M\).

A pseudo-Hermitian \(1\)-form \(\theta \) on \(M\) gives a dual description of \( H(M)\),
that is,
\(H(M)=\ker \theta \).
The Levi form \(L_{\theta }\) corresponding to \(\theta \) is defined by
\begin{equation}
  L_{\theta }(X,Y)=d\theta (X,J_{b}Y)  \label{eqn1}
\end{equation}
for any \(X,Y\in H(M)\).
The integrability condition (\ref{eqn0-1}) of the CR structure
implies that \(L_{\theta }\) is \(J_{b}\)-invariant,
and thus symmetric.
We say that \((M,H^{1,0}(M))\) is nondegenerate
if the Levi form \( L_{\theta }\) is non-degenerate on \(H(M)\)
for some choice of pseudo-Hermitian \(1\)-form \(\theta \) on \(M\).
In particular,
if \(L_{\theta }\) is positive definite on \(H(M)\) for some \(\theta \),
then \((M,H^{1,0}(M))\) is said to be strictly pseudoconvex.

From now on let \((M^{2m+1},H^{1,0}(M))\) be a non-degenerate CR manifold
with a given pseudo-Hermitian \(1\)-form \(\theta \).
The non-degenerate condition for \( L_{\theta }\) means that \(\theta \) is a contact form.
Thus there is a unique vector field \(\xi \in \Gamma (T(M))\) such that
\begin{equation}
  \theta (\xi )=1, \quad i_{\xi} d\theta =0  \label{eqn1-1}
\end{equation}
where \(i_{\xi }\) denotes the interior product with respect to \(\xi \).
This vector field \(\xi \) is referred to as the \emph{Reeb vector field}
of the contact manifold \((M,\theta )\).
The collection of all its integral curves forms an oriented one-dimensional foliation
\(\mathcal{F}_{\xi }\) on \(M\),
which is called the \emph{Reeb foliation}.
Using Cartan's magic formula,
we obtain from (\ref{eqn1-1}) that
\begin{equation}
  \mathcal{L}_{\xi }\theta =0  \label{eqn1-2}
\end{equation}
where \(\mathcal{L}_{\xi }\) denotes the Lie derivative with respect to \(\xi \).
The first condition in \eqref{eqn1-1} also implies that \(\xi \) is transversal to \(H(M)\).
Thus \(T(M)\) admits a decomposition
\begin{equation}
  T(M)=H(M)\oplus V_{\xi }  \label{eqn2}
\end{equation}
where \(V_{\xi }:= \mathbb{R} \xi \) is a trivial line bundle on \(M\).
In terms of terminology from foliation theory, 
\(H(M)\) and \(V_{\xi }\) are also called the horizontal and vertical distributions respectively.
Whenever the decomposition \eqref{eqn2} is given,
the complex structure \(J_{b}\) on \(H(M)\) can be extended to
an endomorphism \(J\) of \(T(M)\) by requiring that
\begin{equation}
  J \mid_{H(M)}=J_{b}\text{ \ and \ }J\mid_{V_{\xi }}=0  \label{eqn2-1}
\end{equation}
where \(\mid \) denotes the fiberwise restriction.
Clearly \( \operatorname{Im}(J)=H(M)\) and \((J,\xi ,\theta )\)
is an almost contact structure in the following sense
\begin{align}
  \label{eqn2-2}
  \begin{aligned}
    J^{2}
    & = -I+\theta \otimes \xi, \quad \theta (\xi )=1, \\
    J \xi
    &= 0, \quad \theta \circ J=0.
  \end{aligned}
\end{align}
Actually the latter two conditions\ in \eqref{eqn2-2} are deducible from the former two
(cf. Theorem 4.1 in \cite{blair2002riem}).
Notice that in the definition of an almost contact structure,
\(J\) is not required to satisfy the integrability condition \eqref{eqn0-1} in general.
However we will restrict our attention only to the integrable case in this paper.
Let \(\pi_{H}:T(M)\rightarrow H(M) \) be the natural projection
associated with the direct sum decomposition \eqref{eqn2}.
Then the Levi form \(L_{\theta }\) may be extended to a pseudo-Riemannian metric as follows:
\begin{equation}
  g_{\theta }=\pi_{H}^{\ast }(L_{\theta }) + \theta \otimes \theta  \label{eqn3}
\end{equation}
where \(\pi_{H}^{\ast }(L_{\theta })\) denotes the \(2\)-tensor field on \(M\) given by
\begin{equation}
  (\pi_{H}^{\ast }L_{\theta })(X,Y)=L_{\theta }(\pi_{H}X,\pi_{H}Y)
  \label{eqn3-1}
\end{equation}
for any \(X,Y\in T(M)\).
The metric \(g_{\theta }\) in \eqref{eqn3} is called the \emph{Webster metric} of
\((M,\theta ,J)\).
In particular,
if \( L_{\theta }\) is positive definite on \(H(M)\),
then \(g_{\theta }\) is a Riemannian
metric on \(M\).

In this paper,
we will endow \((H(M),J_b)\) with a Hermitian metric instead of \(L_{\theta}\).
By definition,
a Hermitian metric on \((H(M),J_b)\) is a fiberwise metric \(h\) on \(H(M)\) satisfying
\begin{equation}
  h(J_bX,J_bY)=h(X,Y) \label{eqn3-2}
\end{equation}
for any \(X,Y\in H(M)\).
With such a metric,
\(\left( H^{1,0}(M),h\right) \) becomes a Hermitian vector bundle of
complex rank \(m\) over \(M^{2m+1}\).
In terms of \(h\) and \(\theta \),
one may introduce a Riemannian metric \( g_{h,\theta}\) on \((M,\theta)\) by
\begin{equation}
  g_{h,\theta}=\pi_H^{\ast}(h)+\theta \otimes \theta \label{eqn4}
\end{equation}
where \(\pi_H^{\ast}(h)\) is defined similarly as in \eqref{eqn3-1}.
This metric \(g_{h,\theta}\) is a Riemannian extension of \(h\).
Notice that the tensor field \(\pi_H^{\ast}(h)\) depends on the decomposition \eqref{eqn2} too.
A fiberwise metric on \(H(M)\) may have different Riemannian extensions if different decompositions of \(T(M)\) are considered.
In following,
we will often write \(\pi_H^{\ast}(h)\) as \(h\) for simplicity when the decomposition \eqref{eqn2} is fixed.
From \eqref{eqn4},
we have
\begin{equation}
  \theta (X)=g_{h,\theta}(\xi ,X) \label{eqn5}
\end{equation}
and
\begin{equation}
  g_{h,\theta}(JX,JY)=g_{h,\theta}(X,Y)-\theta (X)\theta (Y) \label{eqn5-1}
\end{equation}
for any \(X,Y\in T(M)\).
We find that the Riemannian metric \(g_{h,\theta}\) is compatible with the almost contact structure \((J,\xi ,\theta)\).
Thus \( (J,\xi ,\theta ,g_{h,\theta})\) is an almost contact metric structure,
which corresponds to a \(\{1\}\times U(m)\)-structure on \(M^{2m+1}\) (see page 44 in \cite{blair2002riem} for related definitions of these notions or Chapter 6 in \cite{boyer2008sasaki}).
The decomposition \eqref{eqn2} clearly becomes an orthogonal direct sum of \(TM\) with respect to \(g_{h,\theta}\).
When \(M\) is a strictly pseudoconvex CR manifold,
one natural Hermitian metric on \(H(M)\) is given by the positive definite Levi form \(L_{\theta}\).
In this case,
\(g_{L_{\theta},\theta}\) is just the Webster metric \(g_{\theta}\).
One usually uses the contact condition about \(\theta \) to induce the decomposition \eqref{eqn2} on an abstract CR manifold.
However,
in some situations,
this kind of decomposition is available without the non-degenerate condition of \(d\theta \) (see Example \ref{ex:2}).

Henceforth the synthetic object \((M^{2m+1},\theta ,J,h)\) will be called a pseudo-Hermitian manifold with pseudo-Hermitian structure \((\theta ,J,h)\).
Notice that the terminology "pseudo-Hermitian manifold" here is more general than that in the literature,
which usually indicates the synthetic object \((M^{2m+1},\theta ,J,L_{\theta})\).
Since \(L_{\theta}\) is not positive definite for a general non-degenerate CR manifold,
the corresponding Webster metric \(g_{\theta}\) is only a pseudo-Riemannian metric.
This causes some trouble for using tools in differential geometry.
Introducing a Hermitian metric \(h\) on \(H(M)\) allows us to investigate geometric analysis problems on non-degenerate (or more general) CR manifolds.
Even if \(M\) is strictly pseudoconvex,
we may explore other metric structures besides the Levi metrics.

We have already mentioned that a non-degenerate CR manifold \((M^{2m+1},\theta ,J)\) has a natural foliation \(\mathcal{F}_{\xi}\).
A smooth \(r\)-form \(\omega \) on \(M\) is said to be horizontal if \(i_{\xi}\omega =0\).
Let \(\mathcal{A}_H^r(M)\) denote the space of all horizontal smooth \(r\)-forms on \(M\).
Notice that \( \mathcal{A}_H^r(M)=\{0\}\) if \(r>2m\) (the rank of \(H(M)\)).
Using the exterior differentiation \(d\),
one may introduce a differential operator
\(d_H:\mathcal{A}_H^r(M)\rightarrow \mathcal{A}_H^{r+1}(M)\) (\(r=0,1,...,2m\)) by
\begin{equation}
  (d_{H}\omega )(X_{1},...,X_{r+1})
  = d\omega \left( \pi_{H}(X_{1}) , \cdots , \pi_{H}(X_{r+1})\right)  \label{eqn5-2}
\end{equation}
for any \(r\)-form \(\omega \in \mathcal{A}_{H}^{r}(M)\),
and \( X_{1}, \cdots ,X_{r+1}\in TM\).
Clearly \(d_{H}\) satisfies
\begin{equation}
  d_{H}(\omega \wedge \sigma)
  = d_{H}\omega \wedge \sigma +(-1)^{r}\omega
  \wedge d_{H}\sigma  \label{eqn5-2-1}
\end{equation}
for any \(\omega \in \mathcal{A}_{H}^{r}(M)\) and \(\sigma \in \mathcal{A}_{H}^{s}(M)\).
However,
the sequence \(\{\mathcal{A}_{H}^{r}(M),d_{H}\}_{r\geq 0}\) is not a cochain complex in general due to the non-integrability of \( H(M) \) as a (real) distribution.

Following the terminology in foliation theory (cf. \cite{molino1988folia}),
a smooth \(r\)-form \(\omega \) on \(M\) is said to be \emph{basic} if it satisfies
\begin{equation}
  i_{\xi}\omega =0 \quad \mbox{and} \quad i_{\xi } d\omega =0.  \label{eqn5.4}
\end{equation}
Let \(\mathcal{A}_{b}^{r}(M)\) denote the space of all basic smooth \(r\)-forms on \(M\).
It follows from the definition that if \(\omega \in \) \(\mathcal{A}_{b}^{r}(M)\),
then \(d\omega \in \mathcal{A}_{b}^{r+1}(M)\).
Clearly \(\{\mathcal{A}_{b}^{r}(M),d\}_{p\geq 0}\) is a cochain complex,
and thus one can define the basic cohomology
\begin{equation}
  \mathcal{H}_{b}^{r}(M)=\frac{\ker \{d:\mathcal{A}_{b}^{r}(M)\rightarrow
    \mathcal{A}_{b}^{r+1}(M)\}}{\operatorname{Im}\{d:\mathcal{A}_{b}^{r-1}(M)\rightarrow
    \mathcal{A}_{b}^{r}(M)\}}.  \label{eqn5.5}
\end{equation}
This notion was introduced by B. Reinhart (\cite{reinhart1959harmon}) for a general foliation.
Notice that \(d\theta \) is a basic form in view of the second property of \eqref{eqn1-1}.
Although \(d\theta \) corresponds to a trivial de Rham cohomology class,
its basic cohomology class \([d\theta ]_{b}\) may not be trivial
(see \S \ref{sec:pseudo-kahl-mfd} for related discussions).

On a pseudo-Hermitian manifold \((M^{2m+1},\theta ,J,h)\),
we define a \(2\)-form \(\Omega_{h,\theta }\) on \(M\) by
\begin{equation}
  \Omega_{h,\theta }(X,Y):=g_{h,\theta }(JX,Y)=h(J_{b}\pi_{H}X,\pi_{H}Y)
  \label{eqn6}
\end{equation}
for any \(X,Y\in T(M)\),
which will be called the associated \emph{pseudo-K\"{a}hler form} of \((\theta ,J,h)\).
It is easy to verify by \eqref{eqn3-2},
\eqref{eqn4} and \eqref{eqn6} that \(\Omega_{h,\theta }\) is \(J\)-invariant and
\begin{equation*}
  \theta \wedge \Omega_{h,\theta }^{m}=m!2^{m}dv_{g_{h,\theta }}
\end{equation*}
where \(dv_{g_{h,\theta }}\) denotes the volume form of \(g_{h,\theta }\).
In particular,
\eqref{eqn6} implies
\begin{equation}
  i_{\xi }\Omega_{h,\theta }=0,  \label{eqn6-1}
\end{equation}
that is,
\(\Omega_{h,\theta }\in \mathcal{A}_{H}^{2}(M)\).
If \(d_{H}\Omega_{h,\theta }=0\),
\((M^{2m+1},\theta ,J,h)\) is said to be of \emph{horizontally K\"{a}hler type}.
Furthermore,
if \(d\Omega_{h,\theta }=0\),
that is,
\begin{equation}
  i_{\xi }d\Omega_{h,\theta }=0 \quad \mbox{and} \quad d_{H}\Omega_{h,\theta }=0,  \label{eqn6-2}
\end{equation}
then the pseudo-Hermitian manifold is said to be of \emph{pseudo-K\"{a}hler type}.
For simplicity,
it is also called a pseudo-K\"{a}hler manifold.
In this case,
the closed basic \(2\)-from \(\Omega_{h,\theta }\) corresponds to a basic cohomology class \([\Omega_{h,\theta }]_{b}\in \mathcal{H}_{b}^{2}(M)\) ,
which will be called the pseudo-K\"{a}hler class of \(g_{h,\theta }\).
By \eqref{eqn6},
we see that \(\Omega_{\lambda h,\theta }=\lambda \Omega_{h,\theta }\) for any positive smooth function \(\lambda \) on \(M\).
Clearly \( (M,H^{1,0}(M),\theta ,\lambda h)\) is no longer pseudo-K\"{a}hlerian in general.
Suppose now that \(\lambda \) and \(\mu \) are any positive constants.
Then \(\frac{1}{\mu }\xi \) is the Reeb vector field of \(\mu \theta \).
This implies that \(\theta \) and \(\mu \theta \) induce the same decomposition \eqref {eqn2} of the tangent bundle.
The transformation from \(g_{h,\theta }\) to \( g_{\lambda h,\mu \theta }\) is called a \emph{mixed} \emph{homothetic transformation }of the metric.
In particular,
\(g_{\lambda h,\theta }\) is referred to as a \emph{horizontal homothetic transformation} of \(g_{h,\theta}\).
Under the mixed homothetic transformation,
we have \(\Omega_{\lambda h,\mu \theta }=\lambda \Omega_{h,\theta }\).
It follows that the mixed homothetic transformation preserves the pseudo-K\"{a}hlerian property.

\begin{example}
  \label{ex:1}                  
  Let \((M^{2m+1},\theta ,J)\) be a strictly pseudoconvex CR manifold with a positive definite \(L_{\theta }\).
  Then \eqref{eqn1} and \eqref{eqn6} yield
  \begin{equation}
    \Omega_{L_{\theta },\theta }=d\theta  \label{eqn7}
  \end{equation}
  that is,
  \((J,\xi ,\theta ,g_{L_{\theta },\theta })\) becomes the so-called \emph{contact metric structure}.
  Notice that the almost CR structure in the definition of a contact metric structure
  is not required to be integrable
  (see, e.g., \cite{blair2002riem} or Definition 6.4.4 in \cite{boyer2008sasaki}). 
  As pointed before,
  we restrict our attention to integrable CR structures in this paper.
  From the above discussion,
  we know that \((M,\mu \theta ,J,\lambda L_{\theta })\) with \(\lambda ,\mu\) positive constants is pseudo-K\"{a}hlerian.
  Here,
  if \(\lambda \neq \mu\),
  \(g_{\lambda L_{\theta },\mu \theta }\) is not the Webster metric.
\end{example}

\begin{example}
  \label{ex:2}                  
  Let \((N^{m+1},\widetilde{J},\widetilde{g})\) be a K\"{a}hler manifold of complex dimension \(m+1\).
  Its K\"{a}hler form \(\Phi \ \) is defined by \(\Phi (X,Y)=\widetilde{g}(\widetilde{J}X,Y)\) for any \(X,Y\in TN\).
  Let \(\mathcal{I} :M^{2m+1}\hookrightarrow N^{m+1}\) be a real hypersurface with induced metric \(g\) and unit normal vector field \(v\).
  Set \(\widehat{\xi }:=-\widetilde{J}v\) and \(\theta =g(\widehat{\xi },\cdot )\).
  Then the induced CR structure on \(M\) is given by \((H(M),J_{b})\) with \(H(M)=\ker \theta \) and \(J_{b}=\widetilde{J} \mid_{H(M)}\).
  We have a natural decomposition
  \begin{equation}
    T(M)=H(M)\oplus \mathbb{R}\widehat{\xi }.  \label{eqn7-1}
  \end{equation}
  without imposing any non-degenerate condition on \(L_{\theta }\).
  Notice that \( \widehat{\xi }\) does not satisfy \(i_{\widehat{\xi }}d\theta =0\) in general.
  This property plays some role in next section.

  We say that \(M\) is a Hopf hypersurface if \(\widehat{\xi }\) is a principal vector everywhere.
  Tubes over complex submanifolds in \(N\) are known to be Hopf (cf.
  \cite{niebergall1997real},
  page 244).
  It is easy to verify that \(i_{\widehat{\xi }}d\theta =0\) if and only if \(M\) is Hopf.
  Assume now that \(M\) is a non-degenerate Hopf hypersurface.
  Let \(h\) be the induced Hermitian metric on \(H(M)\) from \(g\).
  Clearly
  \begin{equation*}
    g_{h,\theta }=g=\mathcal{I}^{\ast }(\widetilde{g}) \quad \mbox{and} \quad
    \Omega_{h,\theta }=\mathcal{I}^{\ast }(\Phi ).
  \end{equation*}
  Since \(N\) is K\"{a}herian,
  that is,
  \(d\Phi =0\),
  we have \(d\Omega_{h,\theta}=0\) automatically.
  This shows that \(M\) is pseudo-K\"{a}hlerian.
\end{example}

\section{Canonical connections} \label{sec:canon-conn}

Let \((M,H^{1,0}(M))\) be a non-degenerate CR manifold and \(\theta \) a fixed pseudo-Hermitian \(1\)-form with Reeb vector field \(\xi \).
In this section,
we will investigate linear connections on \(M\).
If \(\nabla \) is a linear connection on \(M\),
its torsion \(T_{\nabla }\) is defined by
\begin{equation}
  T_{\nabla }(X,Y)=\nabla_{X}Y-\nabla_{Y}X-[X,Y]  \label{eqn8-0}
\end{equation}
for any \(X,Y\in \Gamma (T(M))\).
By complex linear extensions,
\(T_{\nabla }\) may be defined for any \(Z,W\in \Gamma (T(M)\otimes \mathbb{C})\).
Applying \(J\) to the complexified decomposition of \eqref{eqn2} gives
\begin{equation}
  T(M)\otimes \mathbb{C}=H^{1,0}(M)\oplus H^{0,1}(M)\oplus \mathbb{C}\xi .
  \label{eqn8}
\end{equation}
Consequently we have the following projections
\begin{align*}
  \pi_{H}
  & : T(M)\otimes \mathbb{C}\longrightarrow H(M)\otimes \mathbb{C=}
    H^{1,0}(M)\oplus H^{0,1}(M)\text{, \ \ } \\
  \pi_{+}
  & : T(M)\otimes \mathbb{C}\rightarrow H^{1,0}(M)\text{, \ }
    \pi_{-}:T(M)\otimes \mathbb{C}\rightarrow H^{0,1}(M)\text{,} \\
  \pi_{0}
  & : T(M)\otimes \mathbb{C}\rightarrow \mathbb{C\xi }.
\end{align*}
Clearly \(\pi_{H}=\pi_{+}+\pi_{-}\) and
\begin{align}
  \label{eqn8-1}
  \begin{aligned}
    \pi_{+}(Z)
    & =\frac{1}{2}\left( \pi_{H}(Z)-\sqrt{-1}J\pi_{H}(Z)\right) \\
    \pi_{-}(Z)
    & =\frac{1}{2}\left( \pi_{H}(Z)+\sqrt{-1}J\pi_{H}(Z)\right)
  \end{aligned}
\end{align}
for any \(Z\in T(M)\otimes \mathbb{C}\).
A natural connection on \((M,\theta ,J)\) should preserve the decomposition \eqref{eqn8}.
This is equivalent to requiring that the connection \(\nabla \) preserves both \( H(M) \) and \(J\),
that is,
\(\nabla_{X}\Gamma (H(M))\subseteq \Gamma (H(M))\) and \(\nabla_{X}J=0\) for any \(X\in \Gamma (TM)\).
Notice here that \(J\) is the extended endomorphism of $J_{b}$.

\begin{lemma}
  \label{lem:1}                 
  Let \(\nabla \) be a linear connection on \(M\)
  preserving the decomposition \eqref{eqn8} and let \(T_{\nabla }\) be its torsion field.
  Then for any \(Z,W\in H^{1,0}(M)\),
  we have
  \begin{enumerate}[(i)]
  \item  \(\pi_{0}((T_{\nabla }(Z,W))=\pi_{0}(T_{\nabla }(\overline{Z},\overline{W}))=0\) and
    \begin{equation*}
      \pi_{0}(T_{\nabla }(Z,\overline{W}))
      =2\sqrt{-1}L_{\theta }(Z,\overline{W})\xi .
    \end{equation*}

  \item \(\pi_{+}(T_{\nabla }(\overline{Z},\overline{W}))=\pi_{-}(T_{\nabla}(Z,W))=0\).
  \end{enumerate}
\end{lemma}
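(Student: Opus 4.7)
The plan is to separate the two structural hypotheses: the invariance of the decomposition \eqref{eqn8} under $\nabla$ controls the covariant-derivative part of the torsion, while the CR integrability \eqref{eqn0-1} together with the definition of the Levi form controls the bracket part. Concretely, extending $\nabla$ complex-linearly to $T(M)\otimes \mathbb{C}$, the preservation hypothesis implies $\nabla_X \Gamma(H^{1,0}(M)) \subseteq \Gamma(H^{1,0}(M))$ for every $X \in \Gamma(T(M)\otimes \mathbb{C})$, together with the analogues for $H^{0,1}(M)$ and $\mathbb{C}\xi$. Rewriting the torsion as $T_\nabla(X, Y) = (\nabla_X Y - \nabla_Y X) - [X, Y]$, any component of $T_\nabla$ landing in a ``wrong'' summand must coincide with the corresponding component of $-[X, Y]$.

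For part (ii) and the two vanishing statements in part (i), I would take $Z, W \in \Gamma(H^{1,0}(M))$. Both $\nabla_Z W$ and $\nabla_W Z$ lie in $\Gamma(H^{1,0}(M))$, and by \eqref{eqn0-1} so does $[Z, W]$. Hence $T_\nabla(Z, W) \in \Gamma(H^{1,0}(M))$, which yields at once $\pi_-(T_\nabla(Z, W)) = 0$ and $\pi_0(T_\nabla(Z, W)) = 0$. Complex conjugation transfers the conclusion to $T_\nabla(\bar Z, \bar W) \in \Gamma(H^{0,1}(M))$, which disposes of $\pi_+(T_\nabla(\bar Z, \bar W)) = 0$ and $\pi_0(T_\nabla(\bar Z, \bar W)) = 0$.

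For the remaining mixed term in (i), I would observe that $\nabla_Z \bar W \in \Gamma(H^{0,1}(M))$ and $\nabla_{\bar W} Z \in \Gamma(H^{1,0}(M))$, so their difference has trivial $\mathbb{C}\xi$-component and $\pi_0(T_\nabla(Z, \bar W)) = -\pi_0([Z, \bar W]) = -\theta([Z, \bar W])\, \xi$. The task is then to express $\theta([Z, \bar W])$ through $L_\theta$. Since $\theta$ annihilates both $Z$ and $\bar W$, the invariant formula for $d\theta$ reduces to $d\theta(Z, \bar W)$ being proportional to $-\theta([Z, \bar W])$; combining this with $J_b \bar W = -\sqrt{-1}\, \bar W$ and the definition \eqref{eqn1} of $L_\theta$ produces the stated identity $\pi_0(T_\nabla(Z, \bar W)) = 2\sqrt{-1}\, L_\theta(Z, \bar W)\, \xi$.

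The whole argument is a direct unpacking of the preservation hypothesis together with the CR integrability condition, so I do not foresee a genuine obstacle. The only point requiring real care is tracking the numerical constant in the last identity, which depends on the paper's normalization of the exterior derivative and is responsible for the factor $2\sqrt{-1}$ rather than $\sqrt{-1}$.
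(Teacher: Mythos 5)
Your proposal is correct and follows essentially the same route as the paper: preservation of the decomposition kills the covariant-derivative contributions, integrability \eqref{eqn0-1} handles $[Z,W]$, and the mixed $\xi$-component reduces to $-\theta([Z,\overline{W}])=2d\theta(Z,\overline{W})=2\sqrt{-1}L_{\theta}(Z,\overline{W})$ via \eqref{eqn1} and $J_{b}\overline{W}=-\sqrt{-1}\,\overline{W}$. The only (harmless) difference is that you bundle the vanishing of $\pi_{0}$ and $\pi_{-}$ on $H^{1,0}\times H^{1,0}$ into the single observation $T_{\nabla}(Z,W)\in\Gamma(H^{1,0}(M))$, which makes the role of integrability in part (i) slightly more explicit than the paper does.
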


\begin{proof}
  Since \(\nabla \) preserves the decomposition \eqref{eqn8},
  we have
  \begin{equation*}
    \pi_{0}((T_{\nabla }(Z,W))=\pi_{0}(T_{\nabla }(\overline{Z},\overline{W}))=0
  \end{equation*}
  for any \(Z,W\in H^{1,0}(M)\).
  Using \(\ker \theta =H(M)\) and \eqref{eqn1},
  we deduce
  \begin{align*}
    \theta (T_{\nabla }(Z,\overline{W}))
    & = \theta (\nabla_{Z}\overline{W}
      -\nabla_{\overline{W}}Z-[Z,\overline{W}]) \\
    & = -\theta ([Z,\overline{W}])=2d\theta (Z,\overline{W}) \\
    & = 2\sqrt{-1}L_{\theta }(Z,\overline{W}).
  \end{align*}
  This completes the proof of (i).

  Next we prove (ii). The integrability of the CR structure implies that
  \begin{equation*}
    \pi_{+}(T_{\nabla }(\overline{Z},\overline{W}))
    =-\pi_{+}([\overline{Z}, \overline{W}])=0
  \end{equation*}
  for any \(Z,W\in H^{1,0}(M)\).
  Taking conjugation of \(\pi_{+}(T_{\nabla }(\overline{Z},\overline{W}))\),
  we get \(\pi_{-}(T_{\nabla }(Z,W))=0\).
\end{proof}

We now consider a pseudo-Hermitian manifold \((M,\theta ,J,h)\).
A connection on \(M\) preserving both the decomposition \eqref{eqn8}
and \( g_{h,\theta }\) will be called a \emph{pseudo-Hermitian connection}.

\begin{lemma}
  \label{lem:2}                 
  Let \(\nabla \) be a pseudo-Hermitian connection on \((M^{2m+1},\theta ,J,h)\).
  Then (i) \(\nabla \xi =0\) and \(\nabla \theta =0\).
  (ii) \(\pi_{0}(T_{\nabla }(\xi ,X))=0\ \)for any $X\in T(M)$.
\end{lemma}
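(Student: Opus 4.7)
The plan is to exploit the two defining properties of a pseudo-Hermitian connection (preservation of the decomposition \eqref{eqn8} and of the metric $g_{h,\theta}$) together with the contact identities \eqref{eqn1-1} for $\xi$ and $\theta$.

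\textbf{Part (i).} First I would observe that since $\nabla$ preserves the decomposition \eqref{eqn8}, in particular it preserves the line subbundle $\mathbb{C}\xi$. Because $\xi$ is real, this means $\nabla_X \xi = f(X)\xi$ for some real-valued $1$-form $f$ on $M$. On the other hand, by \eqref{eqn4} and $\theta(\xi)=1$ we have $g_{h,\theta}(\xi,\xi)=1$; differentiating this identity using $\nabla g_{h,\theta}=0$ gives $0 = 2g_{h,\theta}(\nabla_X\xi,\xi) = 2f(X)$, so $f\equiv 0$ and $\nabla\xi=0$. For $\nabla\theta=0$, I would apply \eqref{eqn5}, which says $\theta = g_{h,\theta}(\xi,\cdot)$; since both factors on the right are parallel, so is $\theta$.

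\textbf{Part (ii).} Using $\nabla\xi=0$ just established, \eqref{eqn8-0} simplifies to
\begin{equation*}
T_\nabla(\xi,X) = \nabla_\xi X - [\xi,X].
\end{equation*}
Applying $\theta$ and using $\nabla\theta = 0$ to rewrite $\theta(\nabla_\xi X) = \xi(\theta(X))$, I get
\begin{equation*}
\theta(T_\nabla(\xi,X)) = \xi(\theta(X)) - \theta([\xi,X]).
\end{equation*}
The contact identity $i_\xi d\theta = 0$ from \eqref{eqn1-1} combined with $\theta(\xi)=1$ (so $X\theta(\xi)=0$) gives
\begin{equation*}
0 = d\theta(\xi,X) = \xi(\theta(X)) - X(\theta(\xi)) - \theta([\xi,X]) = \xi(\theta(X)) - \theta([\xi,X]),
\end{equation*}
and the two right-hand sides coincide, so $\theta(T_\nabla(\xi,X))=0$. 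Since $\pi_0$ is projection onto $\mathbb{C}\xi$ and $\theta$ detects the $\xi$-component, this gives $\pi_0(T_\nabla(\xi,X))=0$.

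There is really no serious obstacle here; the argument is essentially a bookkeeping exercise. The only point where one must be careful is the second step of part (ii), where one should not confuse the two standard conventions for $d\theta$ — using $d\theta(\xi,X)=\xi\theta(X)-X\theta(\xi)-\theta([\xi,X])$ together with $i_\xi d\theta=0$ makes the cancellation transparent.
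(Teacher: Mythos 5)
Your proof is correct and follows essentially the same route as the paper: part (i) via preservation of the decomposition plus metric compatibility (the paper checks $\nabla\theta=0$ componentwise where you invoke $\theta=g_{h,\theta}(\xi,\cdot)$, an immaterial difference), and part (ii) via $i_\xi d\theta=0$, which is exactly the content of the paper's appeal to $\mathcal{L}_\xi\theta=0$. Your remark about the $d\theta$ convention is well taken but harmless here since the relevant quantity vanishes either way.
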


\begin{proof}
  In terms of the properties of a pseudo-Hermitian connection,
  we see that \(\nabla \xi \) is parallel to \(\xi\), 
  and \(g_{h,\theta }(\nabla \xi ,\xi )=0\).
  This shows
  \begin{equation}
    \nabla \xi =0,  \label{eqn8-2}
  \end{equation}
  which implies that \((\nabla_{X}\theta )(\xi )=0\) for any \(X\in \Gamma (T(M)) \).
  It is also easy to verify that \((\nabla_{X}\theta )(Y)=0\) for any \(X\in \Gamma (T(M))\) and \(Y\in \Gamma (H(M))\).
  Hence
  \begin{equation}
    \nabla \theta =0.  \label{eqn8-3}
  \end{equation}

  Using \eqref{eqn8-2}, \eqref{eqn8-3} and \eqref{eqn1-2},
  we deduce that
  \begin{align*}
    \theta (T_{\nabla }(\xi ,X))
    & = \theta (\nabla_{\xi }X-\nabla_{X}\xi -[\xi
      ,X]) \\
    & = \theta (\nabla_{\xi }X)-\theta ([\xi ,X]) \\
    & = \nabla_{\xi }(\theta (X))-\theta ([\xi ,X]) \\
    & = (\mathcal{L}_{\xi }\theta )(X) \\
    & = 0
  \end{align*}
  for any \(X\in \Gamma (T(M))\).
  This proves (ii).
\end{proof}

Let \(\nabla \) be a pseudo-Hermitian connection in \((M,\theta ,J,h)\) .
According to Lemmas \ref{lem:1} and \ref{lem:2},
the \(\xi \)-component of \(T_{\nabla }(\cdot ,\cdot )\) with respect to the decomposition \eqref{eqn8} is completely clear.
The remaining torsion information of the connection is thus contained in the \(H(M)\)-valued \(2\)-form \(\pi_{H}(T_{\nabla }(\cdot ,\cdot ))\).
The complex linear extension of this \(2\)-form,
still denoted by the same notation,
may be written as
\begin{equation}
  \pi_{H}(T_{\nabla }(\cdot ,\cdot ))=\Theta +\overline{\Theta }
  \label{eqn8-4}
\end{equation}
where \(\Theta :=\pi_{+}(T_{\nabla }(\cdot ,\cdot ))\).
In order to investigate the pseudo-Hermitian connection \(\nabla \) further,
let us consider the following \(H^{1,0}(M)\)-valued \(2\)-forms
\begin{align}
  \Theta^{(2,0)}(Z,W)
  & = \Theta (\pi_{+}(Z),\pi_{+}(W))  \notag \\
  \Theta^{(0,2)}(Z,W)
  & = \Theta (\pi_{-}(Z),\pi_{-}(W))  \label{eqn8-5} \\
  \Theta^{(1,1)}(Z,W)
  & = \Theta (\pi_{+}(Z),\pi_{-}(W))+\Theta (\pi_{-}(Z),\pi_{+}(W))  \notag
\end{align}
for any \(Z,W\in T(M)\otimes \mathbb{C}\) and introduce a \(T(M)\)-valued \(1\) -form \(\tau\) by
\begin{equation}
  \tau (X)=T_{\nabla }(\xi ,X)  \label{eqn8-6}
\end{equation}
for any \(X\in T(M)\).
We have already shown in Lemma \ref{lem:1} that \(\Theta^{(0,2)}=0\),
which is just the integrability of the CR structure.
As a partial ingredient of \(T_{\nabla }\),
\(\tau \) will be referred to as the \emph{pseudo-Hermitian torsion} \(1\)-form.
Since \(T_{\nabla }\) is anti-symmetric,

\begin{equation}
  \tau (\xi )=0.  \label{eqn8-6-0}
\end{equation}
By Lemma \ref{lem:2},
\(\tau \) is actually an \(H(M)\)-valued \(1\)-form,
whose complex linear extension will be still denoted by \(\tau \).
It is evident from the above discussion that \(\pi_{H}(T_{\nabla }(\cdot ,\cdot ))\)
is completely determined by \(\Theta^{(2,0)}\),
\(\Theta^{(1,1)}\) and $\tau $.

Let \(\Psi \) be a complex valued \(2\)-form on \(M\).
We say that \(\Psi \) is a \( (1,1)\)-form with respect to the structure \(J\) if
\begin{equation*}
  \Psi (\pi_{+},\pi_{+})=\Psi (\pi_{-},\pi_{-})=0
  \quad \mbox{and} \quad
  i_{\xi}\Psi =0.
\end{equation*}
For example,
\(\Theta^{(1,1)}\) is a \((1,1)\)-form on \(M\).
Put
\begin{equation}
  \sigma =\tau \circ J+J\circ \tau .  \label{eqn8-6-1}
\end{equation}
Clearly \(\sigma \) is an \(H(M)\)-valued \(1\)-form with \(\sigma (\xi )=0\).
By \eqref{eqn2-2},
we have
\begin{equation}
  \sigma \circ J=J\circ \sigma =J\circ \tau \circ J-\tau  \label{eqn8-6-2}
\end{equation}
and thus
\begin{equation}
  \sigma (H^{1,0}(M))\subseteq H^{1,0}(M), \quad \sigma
  (H^{0,1}(M))\subseteq H^{0,1}(M).  \label{eqn8-6-3}
\end{equation}

\begin{theorem}
  \label{thm:5}                 
  Let \((M^{2m+1},\theta ,J,h)\) be a pseudo-Hermitian manifold.
  Given any \(H^{1,0}(M)\)-valued \((1,1)\)-form \(\Psi \) on \(M\),
  there is a unique pseudo-Hermitian connection \(\nabla \) such that
  \begin{equation}
    \Theta^{(1,1)}=\Psi  \label{eqn8-7}
  \end{equation}
  and
  \begin{equation}
    h(\sigma (X),Y)=-h(X,\sigma (Y))  \label{eqn8-8}
  \end{equation}
  for any \(X,Y\in T(M)\),
  where \(\sigma \) is defined by \eqref{eqn8-6-1}.
\end{theorem}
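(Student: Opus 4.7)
My plan is to exploit the fact that a pseudo-Hermitian connection on $(M,\theta,J,h)$ is completely determined by its $\mathbb{C}$-linear action on $H^{1,0}(M)$: by Lemma \ref{lem:2}, $\nabla\xi=0$ is forced, the action on $H^{0,1}(M)$ comes by conjugation, and compatibility along $\mathbb{C}\xi$ is automatic. So it suffices to pin down $\nabla_X Z$ for $Z\in\Gamma(H^{1,0}(M))$ as $X$ ranges over $H^{1,0}(M)$, $H^{0,1}(M)$, and $\mathbb{R}\xi$. I follow a Koszul-style strategy: derive necessary formulas from the imposed constraints, then verify that the reassembled object is a pseudo-Hermitian connection satisfying every requirement.

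For the $H^{0,1}(M)$-direction, the prescription $\Theta^{(1,1)}=\Psi$ together with $\nabla_Z\bar W\in H^{0,1}(M)$ forces, for any $Z,W\in H^{1,0}(M)$,
\[
\pi_+\bigl(T_\nabla(Z,\bar W)\bigr) = -\nabla_{\bar W}Z - \pi_+([Z,\bar W]) = \Psi(Z,\bar W),
\]
hence the unique formula $\nabla_{\bar W}Z = -\Psi(Z,\bar W) - \pi_+([Z,\bar W])$, and by conjugation $\nabla_W\bar Z$ is also determined. For the $H^{1,0}(M)$-direction, Hermitian compatibility $W\cdot h(Z,\bar U)=h(\nabla_WZ,\bar U)+h(Z,\nabla_W\bar U)$ combined with the already-known $\nabla_W\bar U$ pins down $h(\nabla_WZ,\bar U)$ for every $U\in H^{1,0}(M)$; nondegeneracy of the pairing $h:H^{1,0}(M)\times H^{0,1}(M)\to\mathbb{C}$ then recovers $\nabla_WZ$ uniquely. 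Observe that $\Theta^{(2,0)}$ emerges here as a derived rather than a prescribed object.

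What remains is $\nabla_\xi Z$, equivalently $\tau(Z)=\nabla_\xi Z-[\xi,Z]$. The identity $\theta([\xi,Z])=-(i_\xi d\theta)(Z)=0$ and the requirement that $\nabla$ preserves \eqref{eqn8} force $\pi_0(\tau(Z))=0$ and $\tau(Z)^{0,1}=-\pi_-([\xi,Z])$, so only $A(Z):=\tau(Z)^{1,0}$ remains free, and a direct check via the Leibniz rule shows that $A$ is a genuine $\mathbb{C}$-linear endomorphism of $H^{1,0}(M)$ at each point. Using $J|_{H^{1,0}(M)}=\sqrt{-1}\,\mathrm{Id}$, one computes $\sigma(Z)=2\sqrt{-1}\,A(Z)$, so the skew-symmetry $h(\sigma X,Y)+h(X,\sigma Y)=0$ holds automatically when $X,Y$ are of the same pure type and reduces, with $X=Z$ and $Y=\bar W$, to the self-adjointness identity $h(A(Z),\bar W)=h(Z,\overline{A(W)})$. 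Meanwhile, Hermitian compatibility of $\nabla$ in the $\xi$-direction specifies the tensor $A+A^{*}$ in terms of known data $\xi\cdot h$ and $\pi_+[\xi,\cdot\,]$. The two conditions --- $A=A^{*}$ and a prescribed value of $A+A^{*}$ --- determine $A$ uniquely.

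The main obstacle, once these formulas are in hand, is the existence half: after reassembling the three pieces into a single $\mathbb{R}$-linear operator on $T(M)$ via $\nabla\xi=0$ and complex conjugation, one must verify that the resulting object genuinely preserves \eqref{eqn8}, is compatible with $g_{h,\theta}$, realizes $\Theta^{(1,1)}=\Psi$, and satisfies the skew-symmetry of $\sigma$ simultaneously. This amounts to running each derivation in reverse and checking that every prescription is tensorial in its entries; uniqueness then follows automatically from the ``forced'' nature of each step.
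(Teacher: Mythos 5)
Your proposal is correct and follows essentially the same route as the paper's proof of Theorem \ref{thm:5}: $\nabla_{\overline{W}}Z$ is forced by $\Theta^{(1,1)}=\Psi$, then $\nabla_{W}Z$ by metric compatibility and non-degeneracy of $h$, and finally $\nabla_{\xi}$ by $\nabla J=0$ together with metric compatibility and the skew-symmetry of $\sigma$ (your endomorphism $A=\tau^{1,0}|_{H^{1,0}(M)}$ equals $\tfrac{1}{2\sqrt{-1}}\,\sigma|_{H^{1,0}(M)}$, so your parametrization coincides with the paper's). The existence half, which you only sketch, is exactly the verification the paper carries out in \eqref{eqn13}--\eqref{eqn21}; the one point worth making explicit there is that the prescribed value of $A+A^{\ast}$ is itself self-adjoint, so that setting $A:=\tfrac{1}{2}(A+A^{\ast})$ really does satisfy $A=A^{\ast}$ (this is the paper's remark about taking the conjugate of \eqref{eqn12-2}).
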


\begin{proof}
  First we show the uniqueness of the pseudo-Hermitian connection \(\nabla \)
  that satisfies \eqref{eqn8-7} and \eqref{eqn8-8}.
  For any \(Z,W\in \Gamma (H^{1,0}(M))\),
  we have
  \begin{align*}
    \Theta^{(1,1)}(Z,\overline{W})
    & = \pi_{+}(T_{\nabla }(Z,\overline{W})) \\
    & = \pi_{+}(\nabla_{Z}\overline{W}-\nabla_{\overline{W}}Z-[Z,\overline{W}])
    \\
    & = -\nabla_{\overline{W}}Z-\pi_{+}([Z,\overline{W}])
  \end{align*}
  that is,
  \begin{equation}
    \nabla_{\overline{W}}Z=\pi_{+}([\overline{W},Z])-\Psi (Z,\overline{W}).
    \label{eqn9-1}
  \end{equation}
  For any \(X,Y,Z\in H^{1,0}(M)\),
  we obtain from \(\nabla g_{h,\theta }=0\) and \eqref{eqn9-1} that
  \begin{equation*}
    X(g_{h,\theta }(Y,\overline{Z})
    =g_{h,\theta }(\nabla_{X}Y,\overline{Z})
    +g_{h,\theta }(Y,\pi_{-}([X,\overline{Z}]+\overline{\Psi (\overline{X},Z)})
  \end{equation*}
  that is,
  \begin{equation}
    h (\nabla_{X}Y,\overline{Z})
    = Xh(Y,\overline{Z})
    - h(Y,\pi_{-}([X,\overline{Z}])
    + \overline{\Psi (\overline{X},Z)}).
    \label{eqn9-2}
  \end{equation}
  Since \(h\) is a Hermitian metric on \(H(M)\),
  \eqref{eqn9-2} determines \(\nabla_{X}Y\) uniquely for any \(X,Y\in \Gamma (H^{1,0}(M))\).

  In terms of \eqref{eqn8-6} and Lemma \ref{lem:2},
  we obtain
  \begin{equation}
    \nabla_{\xi }X=\tau (X)+[\xi ,X]=\tau (X)+\mathcal{L}_{\xi }X  \label{eqn10}
  \end{equation}
  for any \(X\in \Gamma (TM)\),
  where \(\mathcal{L}\) denotes the Lie derivative.
  Since the pseudo-Hermitian connection satisfies \(\nabla J=0\),
  we deduce by means of \eqref{eqn8-6-1} and \eqref{eqn10} that
  \begin{align*}
    0 & = (\nabla_{\xi }J)(X) \\
      & = \nabla_{\xi }(JX)-J\nabla_{\xi }X \\
      & = \tau (JX)+\mathcal{L}_{\xi }JX-J(\tau (X)+\mathcal{L}_{\xi }X) \\
      & = -2J\tau (X)+\sigma (X)+(\mathcal{L}_{\xi }J)(X)
  \end{align*}
  for any \(X\in \Gamma (TM)\).
  Therefore
  \begin{equation}
    \tau (X)
    = -\frac{1}{2} \bigl[J\circ \sigma (X)
    +J\circ (\mathcal{L}_{\xi}J)(X) \bigr]  \label{eqn11}
  \end{equation}
  and
  \begin{equation}
    \nabla_{\xi }X
    =-\frac{1}{2}\bigl[J\circ \sigma (X)
    +J\circ (\mathcal{L}_{\xi }J)(X)\bigr]+\mathcal{L}_{\xi }X
    \label{eqn12}
  \end{equation}
  for any \(X\in \Gamma (TM)\).
  Since \(\nabla \) preserves the decomposition \eqref {eqn8},
  we see from \eqref{eqn8-6-3} and \eqref{eqn12} that
  \begin{align}
    \label{eqn12-1}
    \begin{aligned}
      \bigl( -\frac{1}{2} J\circ (\mathcal{L}_{\xi }J)+\mathcal{L}_{\xi }\bigr)
      (H^{1,0}(M)
      & \subseteq H^{1,0}(M) \\
      \\
      \bigl( -\frac{1}{2}J\circ (\mathcal{L}_{\xi }J)+\mathcal{L}_{\xi }\bigr)
      (H^{0,1}(M)
      & \subseteq H^{0,1}(M)
    \end{aligned}
  \end{align}
  Using \(\nabla g_{h,\theta }=0\),
  \eqref{eqn12} and \eqref{eqn8-8},
  we derive for any \(Z,W\in \Gamma (H^{1,0}(M))\) the following condition for \(\sigma \)
  \begin{align*}
    0
    & = (\nabla_{\xi }g_{h,\theta })(Z,\overline{W})
      =\xi g_{h,\theta }(Z, \overline{W})
      -g_{h,\theta }(\nabla_{\xi }Z,\overline{W})
      -g_{h,\theta}(Z,\nabla_{\xi }\overline{W}) \\
    & = \xi g_{h,\theta }(Z,\overline{W})
      -g_{h,\theta }(-\frac{1}{2}[J\circ \sigma (Z)+J\circ (\mathcal{L}_{\xi }J)(Z)]
      +\mathcal{L}_{\xi }Z,\overline{W})
    \\
    & \quad -g_{h,\theta }(Z,-\frac{1}{2}[J\circ \sigma (\overline{W})+J\circ (\mathcal{L}_{\xi }J)(\overline{W})]
      +\mathcal{L}_{\xi }\overline{W}) \\
    & = \xi h(Z,\overline{W})+\sqrt{-1}h(\sigma (Z),\overline{W})
      +h(\frac{1}{2} J\circ (\mathcal{L}_{\xi }J)(Z)-\mathcal{L}_{\xi }Z,\overline{W}) \\
    & \quad +h(Z,\frac{1}{2}J\circ (\mathcal{L}_{\xi }J)(\overline{W})-\mathcal{L}_{\xi }\overline{W})
  \end{align*}
  that is,
  \begin{align}
    \label{eqn12-2}
    \begin{aligned}
      h(\sigma (Z),\overline{W})
      & =\sqrt{-1}
        \biggl[ \xi h(Z,\overline{W})
        +h(\frac{1}{2}J\circ (\mathcal{L}_{\xi }J)(Z)-\mathcal{L}_{\xi }Z,\overline{W}) \\
      & \quad +h(Z,\frac{1}{2}J\circ (\mathcal{L}_{\xi }J)(\overline{W})-\mathcal{L}_{\xi } \overline{W}) \biggr]
    \end{aligned}
  \end{align}
  We conclude from \eqref{eqn9-1}, \eqref{eqn9-2}, \eqref{eqn12}, \eqref{eqn12-2} and Lemma \ref{lem:2} that the connection is uniqueness.

  Conversely, let
  \(\nabla :\Gamma (T(M)\otimes C)\times \Gamma (T(M)\otimes C)\rightarrow \Gamma (T(M)\otimes C)\)
  be a map defined by
  \begin{equation}
    \label{eqn13}
    \begin{gathered}
      \nabla_{\overline{X}}Y=\pi_{+}([\overline{X},Y])-\Psi (Y,\overline{X}), \qquad
      \nabla_{X}\overline{Y}=\overline{\nabla_{\overline{X}}Y},  \\
      \nabla_{X}Y=\beta_{X}Y, \qquad
      \nabla_{\overline{X}}\overline{Y}= \overline{\nabla_{X}Y}, \qquad
      \nabla \xi =0 \\
      \nabla_{\xi }X
      =-\frac{1}{2}[J\circ \sigma (X)+J\circ (\mathcal{L}_{\xi}J)(X)]+\mathcal{L}_{\xi }X, \qquad
      \nabla_{\xi }\overline{X}= \overline{\nabla_{\xi }X},
    \end{gathered}
  \end{equation}
  for any \(X,Y\in \Gamma (H^{1,0}(M))\).
  Here
  \begin{equation*}
    \beta :\Gamma (H^{1,0}(M))\times \Gamma (H^{1,0}(M))\rightarrow \Gamma (H^{1,0}(M))
  \end{equation*}
  is uniquely determined by
  \begin{equation}
    h(\beta_{X}Y,\overline{Z})
    =Xh(Y,\overline{Z})-h(Y,\pi_{-}([X,\overline{Z}])+\overline{\Psi (\overline{X},Z)})  \label{eqn14}
  \end{equation}
  for any \(X,Y,Z\in H^{1,0}(M)\),
  and \(\sigma \ \)is a unique \(TM\)-valued \(1\)-form satisfying \(\sigma (\xi )=0\),
  \(\sigma :\Gamma (H^{1,0}(M))\rightarrow \Gamma (H^{1,0}(M))\) and \eqref{eqn12-2}.
  Clearly the map \(\nabla \) defined by \eqref{eqn13} is a linear connection on \(T(M)\).
  The remaining part of the proof is to show that
  \(\nabla \) is a pseudo-Hermitian connection satisfying \( \Theta^{(1,1)}=\Psi \) and \eqref{eqn8-8}.

  It is easy to see that the covariant derivative \(\nabla_{X}\) defined by \eqref{eqn13} preserves the decomposition \eqref{eqn8} for any \(X\in \Gamma (H(M))\).
  Next,
  using \eqref{eqn1-2} and \eqref{eqn2-2},
  we find that
  \begin{equation}
    \mathcal{L}_{\xi }(\Gamma (H(M))\subseteq \Gamma (H(M))  \label{eqn15}
  \end{equation}
  and
  \begin{align}
    \label{eqn16}
    \begin{aligned}
      \mathcal{L}_{\xi }(J^{2})
      & = (\mathcal{L}_{\xi }J)\circ J+J\circ (\mathcal{L}_{\xi }J) \\
      & = \mathcal{L}_{\xi }(-I+\theta \otimes \xi ) \\
      & =  0.
    \end{aligned}
  \end{align}
  In view of \(\sigma (\xi )=0\) and \(\sigma (H^{1,0}(M))\subseteq H^{1,0}(M)\), 
  we get
  \begin{equation}
    \sigma \circ J=J\circ \sigma . \label{eqn16-1}
  \end{equation}
  According to \eqref{eqn13},
  \eqref{eqn15}, \eqref{eqn16} and \eqref{eqn16-1},
  we have
  \begin{equation}
    \nabla_{\xi }(\Gamma (H(M)))\subseteq \Gamma (H(M))  \label{eqn17}
  \end{equation}
  and
  \begin{align}
    \label{eqn18}
    \begin{aligned}
      \nabla_{\xi }(JX)
      & = -\frac{1}{2}[J\circ \sigma (JX)+J\circ (\mathcal{L}_{\xi }J)(JX)]+\mathcal{L}_{\xi }(JX)  \\
      & = \frac{1}{2}\sigma (X)-\frac{1}{2}(\mathcal{L}_{\xi }J)(X)
        +(\mathcal{L}_{\xi }J)(X)+J(\mathcal{L}_{\xi }X) \\
      & = \frac{1}{2}\sigma (X)+\frac{1}{2}(\mathcal{L}_{\xi }J)(X)+J(\mathcal{L}_{\xi }X)   \\
      & = J\biggl[ -\frac{1}{2}J\sigma (X)
        -\frac{1}{2}J\circ (\mathcal{L}_{\xi}J)(X)+(\mathcal{L}_{\xi }X)\biggr] \\
      & = J\nabla_{\xi }X
    \end{aligned}
  \end{align}
  for any \(X\in \Gamma (H)\).
  Then \eqref{eqn13},
  \eqref{eqn17}\ and \eqref {eqn18} show that \(\nabla_{\xi }\) preserves the decomposition \eqref{eqn8} too.
  We conclude from above that \(\nabla \) preserves the decomposition \eqref{eqn8}.

  In terms of \eqref{eqn13},
  we deduce
  \begin{align}
    \label{eqn19}
    \begin{aligned}
      T_{\nabla }(X,\overline{Y})
      & =\nabla_{X}\overline{Y}-\nabla_{\overline{Y}}X-[X,\overline{Y}]   \\
      & =\pi_{-}([X,\overline{Y}])+\overline{\Psi (\overline{X},Y)}+\pi_{+}([X, \overline{Y}])+\Psi (X,\overline{Y})-[X,\overline{Y}]   \\
      & =\Psi (X,\overline{Y})+\overline{\Psi (\overline{X},Y)}-\theta ([X, \overline{Y}])\xi \\
      & =\Psi (X,\overline{Y})+\overline{\Psi (\overline{X},Y)}+2d\theta (X, \overline{Y})\xi   \\
      & =\Psi (X,\overline{Y})+\overline{\Psi (\overline{X},Y)}+2\sqrt{-1} L_{\theta }(X,\overline{Y})\xi .
    \end{aligned}
  \end{align}
  Hence \(\Theta^{(1,1)}=\Psi \).

  By the definition of \(\tau \) and using \eqref{eqn13},
  we get
  \begin{align}
    \label{eqn20}
    \begin{aligned}
      \tau (X)
      & =T_{\nabla }(\xi ,X)=\nabla_{\xi }X-\nabla_{X}\xi -[\xi ,X]  \\
      & =-\frac{1}{2}[J\circ \sigma (X)+J\circ (\mathcal{L}_{\xi }J)(X)]
        +\mathcal{L}_{\xi }X-\mathcal{L}_{\xi }X   \\
      & =-\frac{1}{2}[J\circ \sigma (X)+J\circ (\mathcal{L}_{\xi }J)(X)].
    \end{aligned}
  \end{align}
  It follows from \eqref{eqn16}, \eqref{eqn16-1} and \eqref{eqn20} that
  \begin{align*}
    (J\circ \tau +\tau \circ J)X
    & =\frac{1}{2}\sigma (X)+\frac{1}{2}(\mathcal{L}_{\xi }J)(X)-\frac{1}{2}J\circ \sigma (JX) \\
    & \quad -\frac{1}{2}J\circ (\mathcal{L}_{\xi }J)(JX) \\
    & =\sigma (X)
  \end{align*}
  for any \(X\in \Gamma (H(M))\).
  Obviously \((J\circ \tau +\tau \circ J)\xi =\sigma (\xi )=0\).
  Thus \(\sigma \) is given by \eqref{eqn8-6-1}.
  Notice that \( \sigma \) is determined by\ \eqref{eqn12-2}.
  It is easy to verify by taking conjugation of \eqref{eqn12-2} that
  \begin{equation}
    h(\sigma (Z),\overline{W})=-h(Z,\sigma (\overline{W}))  \label{eqn21}
  \end{equation}
  for any \(Z,W\in H^{1,0}(M)\).
  This implies \eqref{eqn8-8}.

  Finally,
  we would like to prove \(\nabla g_{h,\theta }=0\).
  Obviously this is equivalent to \(\nabla \theta =0\) and \(\nabla h=0\).
  The former one follows from \(\theta (\xi )=1\),
  \(\nabla_{X}\xi =0\) and \(\nabla_{X}(\Gamma (H(M)))\subseteq \Gamma (H(M))\) for any \(X\in \Gamma (T(M))\).
  For the latter one,
  we first observe that
  \begin{align*}
    (\nabla_{X}h)(\xi ,Y)
    & =X(h(\xi ,Y))-h(\nabla_{X}\xi ,Y)-h(\xi ,\nabla_{X}Y) \\
    & =0
  \end{align*}
  for any \(X,Y\in \Gamma (T(M))\).
  Next,
  due to \eqref{eqn3-2},
  we have \( h(Z,W)=h(\overline{Z},\overline{W})=0\) for any \(Z,W\in H^{1,0}(M)\).
  Thus \( \nabla_{X}h\) vanishes on complex tangent vectors of the same type
  (i.e., both in \(H^{1,0}(M)\) or both in \(H^{0,1}(M)\)),
  because \(\nabla \) preserves the decomposition \eqref{eqn8}.
  Letting \(X,Y,Z\in \Gamma (H^{1,0}(M))\) and using \eqref{eqn14},
  we obtain
  \begin{align*}
    (\nabla_{X}h)(Y,\overline{Z})
    & =Xh(Y,\overline{Z})-h(\nabla_{X}Y, \overline{Z})-h(Y,\nabla_{X}\overline{Z}) \\
    & =Xh(Y,\overline{Z})-h(\beta_{X}Y,\overline{Z})
      -h(Y,\pi_{-}[X,\overline{Z}]+\overline{\Psi (\overline{X},Z)}) \\
    & =0.
  \end{align*}
  Consequently \(\nabla_{X}h=0\) and \(\nabla_{\overline{X}}h=0\) for any \(X\in H^{1,0}(M)\).
  Finally, \eqref{eqn12-2}, \eqref{eqn13} and \eqref{eqn21} imply
  \begin{align*}
    (\nabla_{\xi }h)(Y,\overline{Z})
    & =\xi h(Y,\overline{Z})-h(\nabla_{\xi }Y, \overline{Z})-h(Y,\nabla_{\xi }\overline{Z}) \\
    & =\xi h(Y,\overline{Z})-h\biggl( -\frac{1}{2}[J\sigma (Y)+J(\mathcal{L}_{\xi }J)(Y)]+\mathcal{L}_{\xi }Y,\overline{Z} \biggr) \\
    & \quad -h\biggl( Y,-\frac{1}{2}[J\sigma (\overline{Z})+J(\mathcal{L}_{\xi }J)( \overline{Z})]+\mathcal{L}_{\xi }\overline{Z} \biggr) \\
    & =\xi h(Y,\overline{Z})+\sqrt{-1}h(\sigma (Y),\overline{Z})+h \biggl(\frac{1}{2}J(\mathcal{L}_{\xi }J)(Y)-\mathcal{L}_{\xi }Y,\overline{Z} \biggr) \\
    & \quad +h\biggl( Y,\frac{1}{2}J(\mathcal{L}_{\xi }J)(\overline{Z})-\mathcal{L}_{\xi }\overline{Z} \biggr) \\
    & =0
  \end{align*}
  for any \(Y,Z\in \Gamma (H^{1,0}(M))\).
  We conclude that \(\nabla g_{h,\theta}=0\).
  This complete the proof of Theorem \ref{thm:5}.
\end{proof}

\begin{remark}
  \label{rmk:1}                 
  Even if \(M\) is a degenerate CR manifold,
  we may also use \eqref{eqn13} to define a pseudo-Hermitian connection satisfying \eqref{eqn8-7} and \eqref {eqn8-8},
  provided that \(T(M)\otimes \mathbb{C}\) admits the decomposition \eqref{eqn8} and \(\mathcal{L}_{\xi }\theta =0\).
  In particular,
  this includes the case that \((M,\theta ,J,h)\) is a degenerate Hopf real hypersurface in a Hermitian manifold,
  which has the induced structures from the ambient space.
\end{remark}

\begin{corollary}
  \label{cor:3}                 
  Let \((M,\theta ,J,h)\) be a pseudo-Hermitian manifold.
  Then there is a unique pseudo-Hermitian connection \(\nabla \) such that \(\Theta^{(1,1)}=0\) and \(h(\sigma (X),Y)=-h(X,\sigma (Y))\) for any \(X,Y\in T(M)\).
\end{corollary}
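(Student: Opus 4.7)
The statement is an immediate specialization of Theorem \ref{thm:5} (equivalently Theorem \ref{thm:1}): the zero $2$-form is trivially an $H^{1,0}(M)$-valued $(1,1)$-form on $M$, so one simply takes $\Psi = 0$ in Theorem \ref{thm:5} and reads off both the existence and the uniqueness of $\nabla$.

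Concretely, the plan is: first observe that the zero form $\Psi \equiv 0$ satisfies the defining conditions of an $H^{1,0}(M)$-valued $(1,1)$-form, namely $\Psi(\pi_+,\pi_+)=\Psi(\pi_-,\pi_-)=0$ and $i_\xi \Psi = 0$, vacuously. Then apply Theorem \ref{thm:5} to this $\Psi$. The conclusion is a unique pseudo-Hermitian connection $\nabla$ whose torsion satisfies $\Theta^{(1,1)} = \Psi = 0$ and for which the tensor $\sigma = \tau\circ J + J\circ \tau$ is skew-symmetric with respect to $h$, i.e. $h(\sigma(X),Y) = -h(X,\sigma(Y))$ for all $X,Y\in T(M)$.

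There is no genuine obstacle here; the only thing to verify is that the zero form is an admissible choice of $\Psi$ in Theorem \ref{thm:5}, which is trivial. In particular, one can write $\nabla$ explicitly by substituting $\Psi = 0$ into the formulas \eqref{eqn13}, \eqref{eqn14} and \eqref{eqn12-2}, giving
\begin{equation*}
  \nabla_{\overline{X}} Y = \pi_+([\overline{X},Y]), \qquad
  h(\nabla_X Y, \overline{Z}) = X h(Y, \overline{Z}) - h(Y, \pi_-[X,\overline{Z}])
\end{equation*}
for $X,Y,Z \in \Gamma(H^{1,0}(M))$, together with $\nabla \xi = 0$ and the formula \eqref{eqn12} for $\nabla_\xi$, where $\sigma$ is now the unique endomorphism determined by \eqref{eqn12-2} with $\Psi = 0$. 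Both the existence and uniqueness claims then follow verbatim from the corresponding arguments in the proof of Theorem \ref{thm:5}, completing the proof of the corollary.
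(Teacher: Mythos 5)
Your proposal is correct and matches the paper's intent exactly: the paper states Corollary \ref{cor:3} immediately after Theorem \ref{thm:5} with no separate proof, precisely because it is the specialization \(\Psi = 0\), and the zero form is trivially an admissible \(H^{1,0}(M)\)-valued \((1,1)\)-form. Your explicit formulas obtained by substituting \(\Psi = 0\) into \eqref{eqn13}, \eqref{eqn14} and \eqref{eqn12-2} are also consistent with the construction in the proof of Theorem \ref{thm:5}.
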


The connection in Corollary \ref{cor:3} will be called \emph{the canonical pseudo-Hermitian connection} of \((M,\theta ,J,h)\).
It may be regarded as a generalization of the Chern connection on Hermitian manifolds.
Henceforth we take this connection as a reference connection.
Using Lemma \ref{lem:1} (i),
\eqref {eqn8-4} and \eqref{eqn8-6},
it is easy to see that the total torsion of the canonical connection can be expressed as
\begin{equation}
  \label{eqn22}
  T_{\nabla }(\cdot ,\cdot )
  =2\theta \wedge \tau +\Theta^{(2,0)}
  +\overline{\Theta^{(2,0)}}+2d\theta (\cdot ,\cdot )\xi .
\end{equation}
The vanishing torsion components of the canonical connection reflect the level of integrability of the \(\{1\}\times U(m)\)-structure on \(M^{2m+1}\).
In view of the above results,
the remaining torsion components \(\Theta^{(2,0)}\) and \(\tau \) of the canonical connection should contain essential information about the pseudo-Hermitian manifold.
We will investigate pseudo-Hermitian manifolds with further restrictions or properties on these components later.
It should be pointed out that if \((M,\theta ,J)\) is a strictly pseudoconvex CR manifold,
then the connection defined by \eqref{eqn13} with \( h=L_{\theta }\) and \(\Psi =0\) coincides with the Tanaka-Webster connection (see also Remark \ref{rmk:2}).

Although we are mainly interested in endowing \(H(M)\) with positive definite metrics in this paper,
we would like to end this section by the following remark: if \(\widetilde{h}\) is a fiberwise pseudo-Riemannian metric on \(H(M)\) satisfying \eqref{eqn3-2},
the argument for Theorem \ref{thm:5} also gives us a unique "canonical connection" compatible with \((M,\theta ,J,\widetilde{h})\).
This includes the case that \(\widetilde{h}=L_{\theta }\) when \((M,\theta ,J)\) is only a non-degenerate CR manifold.

\section{Structure equations and Bianchi identities} \label{sec:struct-equat-bianchi}

In this section,
we derive the structure equations for the canonical connection on a pseudo-Hermitian manifold.
Let \((M^{2m+1},\theta ,J,h)\) be a pseudo-Hermitian manifold and let \(\nabla \) be its canonical connection.
For simplicity, 
we will sometimes write \(g_{h,\theta }\) as \(\langle \cdot ,\cdot \rangle\) in the following. 
Let us choose a local orthonormal frame field
\(\{e_{A}\}_{A=0}^{2m}=\{\xi ,e_{1},...,e_{m},e_{m+1},...,e_{2m}\}\) with respect to
\(g_{h,\theta }=\langle \cdot ,\cdot \rangle\) such that
\begin{equation*}
  \{e_{m+1}, \cdots, e_{2m}\}=\{Je_{1}, \cdots, Je_{m}\}.
\end{equation*}
Set
\begin{equation}
  \eta_{0}=\xi,
  \eta_{\alpha }=\frac{1}{\sqrt{2}}(e_{\alpha }-\sqrt{-1}Je_{\alpha }),
  \eta_{\overline{\alpha }}=\frac{1}{\sqrt{2}} (e_{\alpha }+\sqrt{-1}Je_{\alpha }) \quad
  (\alpha =1, \cdots, m).
  \label{3.1}
\end{equation}
Let
\(\{\theta ,\theta^{\alpha },\theta^{\overline{\alpha }}\}\)
be the dual frame field of \(\{\eta_{0}=\xi ,\eta_{\alpha },\eta_{\overline{\alpha }}\} \).
Henceforth we shall make use of the following convention on the ranges of indices:
\begin{gather*}
  A,B,C, \cdots = 0,1, \cdots, m,\overline{1}, \cdots, \overline{m}, \\
  \alpha ,\beta ,\gamma , \cdots = 1, \cdots, m, \quad
  \overline{\alpha }, \overline{\beta },\overline{\gamma }, \cdots
  =\overline{1}, \cdots, \overline{m}.
\end{gather*}
As usual repeated indices are summed over the respective ranges.

Writing
\begin{equation}
  T_{\nabla }(\cdot ,\cdot )
  =T^{0}(\cdot ,\cdot )\xi +T^{\alpha }(\cdot ,\cdot)\eta_{\alpha }
  +T^{\overline{\alpha }}(\cdot ,\cdot )\eta_{\overline{\alpha }},  \label{3.1-0}
\end{equation}
we have
\begin{equation}
  \Theta =\pi_{+}(T_{\nabla }((\cdot ,\cdot ))
  = T^{\alpha }(\cdot ,\cdot )\eta_{\alpha }  \label{3.2}
\end{equation}
in view of \eqref{eqn8-4}.
Recall that \(\Theta^{(0,2)}=\Theta^{(1,1)}=0\) and \(\tau \) is \(H(M)\)-valued for the canonical connection.
In terms of the dual frame field,
we may express \(\tau \) and \(\Theta^{(2,0)}\) as follows
\begin{equation}
  \tau =\tau^{\alpha }\eta_{\alpha }+\tau^{\overline{\alpha }}\eta_{\overline{\alpha }}
  \label{3.3}
\end{equation}
with
\begin{equation}
  \label{3.9-1}
  \begin{cases}
    \tau^{\alpha }
    =A_{\beta }^{\alpha }\theta^{\beta }
    +A_{\overline{\beta }}^{\alpha }\theta^{\overline{\beta }} \\
    \tau^{\overline{\alpha }}
    =A_{\beta }^{\overline{\alpha }}\theta^{\beta}
    +A_{\overline{\beta }}^{\overline{\alpha }}\theta^{\overline{\beta }}
  \end{cases}
\end{equation}
and
\begin{equation}
  \Theta^{(2,0)}
  =(T_{\beta \gamma }^{\alpha }\theta^{\beta }\wedge \theta^{\gamma })\eta_{\alpha }  \label{3.4}
\end{equation}
where \(T_{\beta \gamma }^{\alpha }=T^{\alpha }(\eta_{\beta },\eta_{\gamma})\).
From \eqref{eqn22}, \eqref{3.2}, \eqref{3.9-1} and \eqref{3.4},
we obtain
\begin{align}
  \label{3.5}
  \begin{aligned}
    T^{\alpha }
    & = 2\theta \wedge \tau^{\alpha }
      +T_{\beta \gamma }^{\alpha}\theta^{\beta }\wedge \theta^{\gamma }  \\
    & = 2A_{\beta }^{\alpha }\theta \wedge \theta^{\beta }
      +2A_{\overline{\beta }}^{\alpha }\theta \wedge \theta^{\overline{\beta }}
      +T_{\beta \gamma}^{\alpha }\theta^{\beta }\wedge \theta^{\gamma }
  \end{aligned}
\end{align}
It follows from \eqref{3.3} and \eqref{3.9-1} that
\begin{align}
  \label{3.9-2}
  \begin{aligned}
    \sigma
    & = \tau \circ J+J\circ \tau   \\
    & = \sqrt{-1}(A_{\mu }^{\lambda }\theta^{\mu }-A_{\overline{\mu }}^{\lambda}\theta^{\overline{\mu }})\eta_{\lambda }
      +\sqrt{-1}(A_{\mu }^{\overline{\lambda }}\theta^{\mu }-A_{\overline{\mu }}^{\overline{\lambda }}\theta^{\overline{\mu }})\eta_{\overline{\lambda }}   \\
    & \quad +\sqrt{-1}(A_{\mu }^{\lambda }\theta^{\mu }+A_{\overline{\mu }}^{\lambda}\theta^{\overline{\mu }})\eta_{\lambda }
      -\sqrt{-1}(A_{\mu }^{\overline{\lambda }}\theta^{\mu }+A_{\overline{\mu }}^{\overline{\lambda }}\theta^{\overline{\mu }})\eta_{\overline{\lambda }}   \\
    & = 2\sqrt{-1}\biggl[ (A_{\mu }^{\lambda }\theta^{\mu })\eta_{\lambda }-(A_{\overline{\mu }}^{\overline{\lambda }}\theta^{\overline{\mu }})\eta_{\overline{\lambda }}\biggr] .
  \end{aligned}
\end{align}
Using \eqref{eqn8-8} and \eqref{3.9-2},
we obtain
\begin{equation*}
  2\sqrt{-1}A_{\beta }^{\alpha }
  =\langle \sigma (\eta_{\beta }),\eta_{\overline{\alpha }}\rangle
  =-\langle \eta_{\beta },\sigma (\eta_{\overline{\alpha }})\rangle
  =2\sqrt{-1}A_{\overline{\alpha }}^{\overline{\beta }}
\end{equation*}
that is,
\begin{equation}
  A_{\beta }^{\alpha }=A_{\overline{\alpha }}^{\overline{\beta }}.
  \label{3.9-2-1}
\end{equation}
Note that \eqref{3.9-2-1} means
\(\langle \tau (\eta_{\beta }),\eta_{\overline{\alpha }}\rangle
=\langle \eta_{\beta },\tau (\eta_{\overline{\alpha }})\rangle \).

According to the properties of the canonical connection, we have
\begin{equation*}
  \nabla_{X}\eta_{0}=0, \quad
  \nabla_{X}\eta_{\beta }=\theta_{\beta}^{\alpha }(X)\eta_{\alpha }, \quad
  \nabla_{X}\eta_{\overline{\beta }}
  =\theta_{\overline{\beta }}^{\overline{\alpha }}(X)\eta_{\overline{\alpha}}
\end{equation*}
for any \(X\in TM\), where
\(\{\theta_{0}^{0}=\theta_{0}^{\alpha }=\theta_{0}^{\overline{\alpha }}
=\theta_{\alpha }^{0}=\theta_{\overline{\alpha }}^{0}=0,
\theta_{\beta }^{\alpha },
\theta_{\overline{\beta }}^{\overline{\alpha }}\}\)
are the connection \(1\)-forms of \(\nabla \) with respect to
\(\{\eta_{0}=\xi ,\eta_{\beta },\eta_{\overline{\beta }}\}\).

\begin{lemma}[The first structure equations]
  \label{lem:3}                 
  Under the above notations, we have
  \begin{align}
    \label{3.6}
    \begin{aligned}
      d\theta
      & = 2\sqrt{-1}L_{\alpha \overline{\beta }}\theta^{\alpha }\wedge \theta^{\overline{\beta }}   \\
      d\theta^{\alpha }
      & = -\theta_{\beta }^{\alpha }\wedge \theta^{\beta }+\frac{1}{2}T^{\alpha }
        = -\theta_{\beta }^{\alpha }\wedge \theta^{\beta}
        +\theta \wedge \tau^{\alpha }
        + \frac{1}{2}T_{\beta \gamma }^{\alpha }\theta^{\beta }\wedge \theta^{\gamma } \\
      0
      & = \theta_{\beta }^{\alpha }+\theta_{\overline{\alpha }}^{\overline{\beta }}
    \end{aligned}
  \end{align}
  where \(L_{\alpha \overline{\beta }}=L_{\theta }(\eta_{\alpha },\eta_{\overline{\beta }})\).
\end{lemma}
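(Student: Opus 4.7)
My plan is to verify the three lines of \eqref{3.6} one at a time, each by a direct computation in the frame $\{\eta_0=\xi,\eta_\alpha,\eta_{\bar\alpha}\}$ using only the defining properties of the canonical connection established in \S\ref{sec:canon-conn}.

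For the first line I would start by noting that $\mathcal{L}_\xi\theta=0$ together with $\theta(\xi)=1$ and Cartan's magic formula give $i_\xi d\theta=0$, so no term of $d\theta$ contains $\theta$. The Frobenius integrability \eqref{eqn0-1} forces $[\eta_\alpha,\eta_\beta]\in\Gamma(H^{1,0}(M))$, hence $\theta([\eta_\alpha,\eta_\beta])=\theta([\eta_{\bar\alpha},\eta_{\bar\beta}])=0$, which kills the $(2,0)$- and $(0,2)$-components. Thus $d\theta$ is a linear combination of $\theta^\alpha\wedge\theta^{\bar\beta}$, and the coefficient is read off from $L_\theta(X,Y)=d\theta(X,J_bY)$ applied to $X=\eta_\alpha$, $Y=\eta_{\bar\beta}$, using $J\eta_{\bar\beta}=-\sqrt{-1}\,\eta_{\bar\beta}$; this produces the stated $2\sqrt{-1}\,L_{\alpha\bar\beta}$ after accounting for the wedge-product convention.

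For the second line I would invoke the general first Cartan structure equation $d\theta^A=-\omega^A_B\wedge\theta^B+\tfrac{1}{2}T^A$, which is a formal consequence of $T_\nabla(X,Y)=\nabla_XY-\nabla_YX-[X,Y]$ together with the expansion $\nabla_XY=X(\theta^B(Y))\eta_B+\theta^B(Y)\,\omega^A_B(X)\,\eta_A$. Because the canonical connection preserves the decomposition \eqref{eqn8} and annihilates $\xi$ (Lemma \ref{lem:2}), all mixed connection forms $\theta^0_A$, $\theta^A_0$, $\theta^\alpha_{\bar\beta}$, $\theta^{\bar\alpha}_\beta$ vanish, so $\omega^\alpha_B\wedge\theta^B$ collapses to $\theta^\alpha_\beta\wedge\theta^\beta$. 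Substituting \eqref{3.5} for $T^\alpha$ then yields the claim.

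For the third line I would use metric compatibility $\nabla g_{h,\theta}=0$, which by Lemma \ref{lem:2} is equivalent to $\nabla h=0$ on $H(M)$. A short calculation from \eqref{3.1} and the orthonormality of $\{e_A\}$ with $\{e_{m+\alpha}\}=\{Je_\alpha\}$ shows $h(\eta_\alpha,\eta_{\bar\beta})=\delta_{\alpha\beta}$ and $h(\eta_\alpha,\eta_\beta)=h(\eta_{\bar\alpha},\eta_{\bar\beta})=0$; differentiating the constant $h(\eta_\beta,\eta_{\bar\alpha})=\delta_{\alpha\beta}$ along any $X$ then gives $\theta^\alpha_\beta(X)+\theta^{\bar\beta}_{\bar\alpha}(X)=0$. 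I do not anticipate any real obstacle; the whole argument is structural, and the only care needed is with conventions — the factor $\tfrac{1}{2}$ in front of $T^\alpha$ corresponds to the Kobayashi–Nomizu convention $d\omega(X,Y)=\tfrac{1}{2}(X\omega(Y)-Y\omega(X)-\omega([X,Y]))$, which is the same one that produces the factor $2$ in $2\sqrt{-1}\,L_{\alpha\bar\beta}\,\theta^\alpha\wedge\theta^{\bar\beta}$.
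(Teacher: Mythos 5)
Your proposal is correct and follows essentially the same route as the paper: the second and third lines come from the general torsion expansion in the parallel coframe (giving $d\theta^{A}=-\theta^{A}_{B}\wedge\theta^{B}+\tfrac{1}{2}T^{A}$) together with the expression \eqref{3.5} for $T^{\alpha}$, and from differentiating $\langle\eta_{\alpha},\eta_{\overline{\beta}}\rangle=\delta_{\alpha\overline{\beta}}$. Your direct computation of the first line via $i_{\xi}d\theta=0$, integrability, and the definition of $L_{\theta}$ just re-derives the content of Lemma \ref{lem:1}(i) and Lemma \ref{lem:2}(ii), which is what the paper cites for the identification $d\theta=\tfrac{1}{2}T^{0}$.
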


\begin{proof}
  For any \(X,Y\in \Gamma (TM)\),
  we compute
  \begin{align}
    T_{\nabla }(X,Y)
    & = \nabla_{X}
      \biggl\{\theta (Y)\xi +\theta^{\alpha}(Y)\eta_{\alpha }
      +\theta^{\overline{\alpha }}(Y)\eta_{\overline{\alpha }} \biggr\}  \nonumber \\
    & \quad - \nabla_{Y}\biggl\{\theta (X)\xi +\theta^{\alpha }(X)\eta_{\alpha}
      +\theta^{\overline{\alpha }}(X)\eta_{\overline{\alpha }}\biggr\} \nonumber \\
    & \quad -\biggl\{\theta ([X,Y])\xi +\theta^{\alpha }([X,Y])\eta_{\alpha}
      +\theta^{\overline{\alpha }}([X,Y])\eta_{\overline{\alpha }}\biggr\} \nonumber \\
    & = 2d\theta (X,Y)\xi +2d\theta^{\alpha }(X,Y)\eta_{\alpha }
      +2d\theta^{\overline{\alpha }}(X,Y)\eta_{\overline{\alpha }}  \nonumber \\
    & \quad +2(\theta_{\beta }^{\alpha }\wedge \theta^{\beta })(X,Y)\eta_{\alpha}
      +2(\theta_{\overline{\beta }}^{\overline{\alpha }}
      \wedge \theta^{\overline{\beta }})(X,Y)\eta_{\overline{\alpha }}  \label{3.7}
  \end{align}
  It follows from \eqref{3.1-0} and \eqref{3.7} that
  \begin{equation}
    d\theta =\frac{1}{2}T^{0}, \quad
    d\theta^{\alpha }
    = -\theta_{\beta}^{\alpha }\wedge \theta^{\beta }+\frac{1}{2}T^{\alpha }  \label{3.8}
  \end{equation}
  From Lemmas \ref{lem:1}, \ref{lem:2} and \eqref{3.5}, \eqref{3.8}, 
  we have the first two equations of \eqref{3.6}.
  Taking derivative of
  \(\langle \eta_{\alpha },\eta_{\overline{\beta }}\rangle =\delta_{\alpha \overline{\beta }}\)
  with respect to any \(X\in TM\), we get
  \begin{equation*}
    0=X\langle \eta_{\alpha },\eta_{\overline{\beta }}\rangle
    =\theta_{\alpha}^{\beta }(X)+\theta_{\overline{\beta }}^{\overline{\alpha }}(X)=0.
  \end{equation*}
  This gives the third equation in \eqref{3.6}.
\end{proof}

The curvature tensor of \(\nabla \) is defined by
\begin{equation*}
  R(X,Y)Z=\nabla_{X}\nabla_{Y}Z-\nabla_{Y}\nabla_{X}Z-\nabla_{[X,Y]}Z
\end{equation*}
whose components \(\{R_{BCD}^{A}\}\) with respect to the frame fields \(\{\eta_{A}\}\) are given by
\begin{equation}
  R(\eta_{C},\eta_{D})\eta_{B}=R_{BCD}^{A}\eta_{A}
  =R_{BCD}^{0}\eta_{0}+R_{BCD}^{\alpha }\eta_{\alpha }
  +R_{BCD}^{\overline{\alpha }}\eta_{\overline{\alpha }}
  \label{3.9-3-1}
\end{equation}
with
\begin{equation}
  R_{\overline{\beta }CD}^{\alpha}
  = R_{\beta CD}^{\overline{\alpha }}
  = R_{BCD}^{0}=R_{0CD}^{A}=0, \quad
  R_{BCD}^{A}=-R_{BDC}^{A}.
  \label{3.9.3-2}
\end{equation}
Set
\begin{equation*}
  R_{\overline{\alpha }BCD}=R_{BCD}^{\alpha }
  =\langle R(\eta_{C},\eta_{D})\eta_{B},\eta_{\overline{\alpha }}\rangle
\end{equation*}
and \(R_{\alpha BCD}=R_{BCD}^{\overline{\alpha }}\).
The curvature forms of \(\nabla \) are defined by
\begin{equation}
  R(\cdot ,\cdot )\eta_{\beta }
  =\Omega_{\beta }^{\alpha }(\cdot ,\cdot )\eta_{\alpha }, \quad
  R(\cdot ,\cdot )\eta_{\overline{\beta }}
  =\Omega_{\overline{\beta }}^{\overline{\alpha }}(\cdot ,\cdot )\eta_{\overline{\alpha }}.
  \label{3.9-3-2}
\end{equation}

\begin{lemma}[The second structure equations]
  \label{lem:4}                 
  Under the above notations, we have
  \begin{align}
    d\theta_{\beta }^{\alpha }
    & = -\theta_{\gamma }^{\alpha }\wedge \theta_{\beta }^{\gamma }
      +\frac{1}{2}\Omega_{\beta }^{\alpha }  \nonumber \\
    & = -\theta_{\gamma }^{\alpha }\wedge \theta_{\beta }^{\gamma }
      +\frac{1}{2}(R_{\beta \lambda \mu }^{\alpha }\theta^{\lambda }\wedge \theta^{\mu}
      +R_{\beta \overline{\lambda }\overline{\mu }}^{\alpha }\theta^{\overline{\lambda }}
      \wedge \theta^{\overline{\mu }}) \nonumber \\
    & \quad + R_{\beta \lambda \overline{\mu }}^{\alpha }\theta^{\lambda }
      \wedge \theta^{\overline{\mu }}
      +R_{\beta 0\mu }^{\alpha }\theta \wedge \theta^{\mu }
      +R_{\beta 0\overline{\mu }}^{\alpha }\theta \wedge \theta^{\overline{\mu }}  \label{3.9-3-3}
  \end{align}
  with
  \begin{align}
    \label{3.9.3-4}
    \begin{gathered}
      R_{\beta \lambda \mu }^{\alpha }
      = -R_{\beta \mu \lambda }^{\alpha }, \quad
      R_{\beta \overline{\lambda }\overline{\mu }}^{\alpha }
      =-R_{\beta \overline{\mu }\overline{\lambda }}^{\alpha }, \quad
      R_{\beta \lambda \overline{\mu }}^{\alpha }=-R_{\beta \overline{\mu }\lambda }^{\alpha }, \\
      R_{\beta 0\mu }^{\alpha }
      = -R_{\beta \mu 0}^{\alpha }, \quad
      R_{\beta 0\overline{\mu }}^{\alpha }
      =-R_{\beta \overline{\mu }0}^{\alpha }, \\
      R_{\overline{\alpha }BCD} = -R_{B\overline{\alpha }CD}, \quad
      R_{\alpha BCD}=-R_{B\alpha CD}.
    \end{gathered}
  \end{align}
\end{lemma}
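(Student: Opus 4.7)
The strategy is a direct Cartan-style computation, closely parallel to the derivation of the first structure equations in Lemma \ref{lem:3}. First I would compute $R(X,Y)\eta_\beta$ from its definition using $\nabla_X\eta_\beta = \theta_\beta^\gamma(X)\eta_\gamma$: expanding
\begin{equation*}
  R(X,Y)\eta_\beta
  = \nabla_X(\theta_\beta^\gamma(Y)\eta_\gamma)
  - \nabla_Y(\theta_\beta^\gamma(X)\eta_\gamma)
  - \theta_\beta^\alpha([X,Y])\eta_\alpha,
\end{equation*}
the coefficient of $\eta_\alpha$ groups into $[X\theta_\beta^\alpha(Y) - Y\theta_\beta^\alpha(X) - \theta_\beta^\alpha([X,Y])]$ plus the quadratic piece $[\theta_\beta^\gamma(Y)\theta_\gamma^\alpha(X) - \theta_\beta^\gamma(X)\theta_\gamma^\alpha(Y)]$. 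With the conventions already implicit in \eqref{3.7}--\eqref{3.8}, these are respectively $2\,d\theta_\beta^\alpha(X,Y)$ and $2(\theta_\gamma^\alpha\wedge\theta_\beta^\gamma)(X,Y)$. Comparing with $R(X,Y)\eta_\beta = \Omega_\beta^\alpha(X,Y)\eta_\alpha$ gives $\Omega_\beta^\alpha = 2(d\theta_\beta^\alpha + \theta_\gamma^\alpha\wedge\theta_\beta^\gamma)$, which rearranges to the first line of \eqref{3.9-3-3}.

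Next I would expand $\Omega_\beta^\alpha$ into components. Writing $\Omega_\beta^\alpha = R_{\beta CD}^\alpha\,\theta^C\wedge\theta^D$ with the summation over all $C,D\in\{0,1,\dots,m,\overline{1},\dots,\overline{m}\}$, and then grouping terms by the type of the pair $(C,D)$, the pure pieces $(\lambda,\mu)$ and $(\bar\lambda,\bar\mu)$ remain weighted by $\tfrac{1}{2}$, while the mixed pieces $(\lambda,\bar\mu)$ and those involving the index $0$ get merged with their $(D,C)$-counterparts using the antisymmetry $R_{\beta CD}^\alpha = -R_{\beta DC}^\alpha$ (recorded in \eqref{3.9.3-2}), which doubles them and removes the $\tfrac{1}{2}$. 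This yields precisely the expansion displayed in \eqref{3.9-3-3}.

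Finally, the symmetry relations in \eqref{3.9.3-4} split into two groups. The antisymmetries in the last two indices, such as $R_{\beta\lambda\mu}^\alpha = -R_{\beta\mu\lambda}^\alpha$, $R_{\beta\bar\lambda\bar\mu}^\alpha = -R_{\beta\bar\mu\bar\lambda}^\alpha$, $R_{\beta\lambda\bar\mu}^\alpha = -R_{\beta\bar\mu\lambda}^\alpha$, and those involving the index $0$, are simply specializations of $R_{BCD}^A = -R_{BDC}^A$. The skew-symmetries in the first two indices, $R_{\bar\alpha BCD} = -R_{B\bar\alpha CD}$ and $R_{\alpha BCD} = -R_{B\alpha CD}$, follow from the metric compatibility $\nabla g_{h,\theta} = 0$: this forces $R(X,Y)$ to be a skew endomorphism with respect to $\langle\cdot,\cdot\rangle$, so $\langle R(\eta_C,\eta_D)\eta_B,\eta_{\bar\alpha}\rangle = -\langle \eta_B,R(\eta_C,\eta_D)\eta_{\bar\alpha}\rangle$, and since the dual pairing is $\langle\eta_\alpha,\eta_{\bar\beta}\rangle = \delta_{\alpha\beta}$, the two claimed identities follow.

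There is no real conceptual obstacle here; this is standard structural bookkeeping. The only delicate point is tracking the convention-dependent factors of $\tfrac{1}{2}$ (in $d\omega$ and in the wedge) consistently with those used in Lemma \ref{lem:3}, and correctly handling the asymmetry between pure-type and mixed-type index pairs when converting the unrestricted double sum into the canonical expansion. This is exactly what the chosen grouping above is designed to make transparent.
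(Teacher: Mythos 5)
Your proposal is correct and follows essentially the same route as the paper: a direct expansion of $R(X,Y)\eta_{\beta}$ using $\nabla_{X}\eta_{\beta}=\theta_{\beta}^{\alpha}(X)\eta_{\alpha}$ to obtain $\Omega_{\beta}^{\alpha}=2(d\theta_{\beta}^{\alpha}+\theta_{\gamma}^{\alpha}\wedge\theta_{\beta}^{\gamma})$, a type decomposition of $\Omega_{\beta}^{\alpha}$ in which the mixed and $0$-index pairs are doubled by the antisymmetry $R_{BCD}^{A}=-R_{BDC}^{A}$, and the first-two-index skew symmetries deduced from $\nabla g_{h,\theta}=0$. The conventions on the factors of $\tfrac{1}{2}$ that you flag are handled exactly as in the paper's equations \eqref{3.7}--\eqref{3.10}.
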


\begin{proof}
  A direct computation yields
  \begin{align}
    R(X,Y)\eta_{\beta }
    & = \nabla_{X}\nabla_{Y}\eta_{\beta }-\nabla_{Y}\nabla_{X}\eta_{\beta }
      -\nabla_{\lbrack X,Y]}\eta_{\beta }  \nonumber \\
    & = \nabla_{X}\left( \theta_{\beta }^{\alpha }(Y)\eta_{\alpha }\right)
      -\nabla_{Y}\left( \theta_{\beta }^{\alpha }(X)\eta_{\alpha }\right)
      -\theta_{\beta }^{\alpha }([X,Y])\eta_{\alpha }  \nonumber \\
    & = 2d\theta_{\beta }^{\alpha }(X,Y)\eta_{\alpha }
      +\theta_{\beta }^{\alpha}(Y)\theta_{\alpha }^{\gamma }(X)\eta_{\gamma }
      -\theta_{\beta }^{\alpha}(X)\theta_{\alpha }^{\gamma }(Y)\eta_{\gamma }  \nonumber \\
    & = 2\biggl[d\theta_{\beta }^{\alpha }(X,Y)
      +(\theta_{\gamma }^{\alpha}\wedge \theta_{\beta }^{\gamma })\biggr](X,Y)\eta_{\alpha }.  \label{3.10}
  \end{align}
  In view of \eqref{3.9-3-1}, \eqref{3.9-3-2} and \eqref{3.10}, we have
  \begin{align*}
    \Omega_{\beta }^{\alpha }
    & = R_{\beta \lambda \mu }^{\alpha }\theta^{\lambda }\wedge \theta^{\mu }
      +R_{\beta \overline{\lambda }\overline{\mu }}^{\alpha }\theta^{\overline{\lambda }}
      \wedge \theta^{\overline{\mu }} \\
    & \quad +2(R_{\beta \lambda \overline{\mu }}^{\alpha }\theta^{\lambda }
      \wedge \theta^{\overline{\mu }}
      +R_{\beta 0\mu }^{\alpha }\theta \wedge \theta^{\mu }
      +R_{\beta 0\overline{\mu }}^{\alpha }\theta \wedge \theta^{\overline{\mu }})
  \end{align*}
  and
  \begin{align*}
    d\theta_{\beta }^{\alpha }
    & = -\theta_{\gamma }^{\alpha }\wedge \theta_{\beta }^{\gamma }
      +\frac{1}{2}\Omega_{\beta }^{\alpha } \\
    & = -\theta_{\gamma }^{\alpha }\wedge \theta_{\beta }^{\gamma }
      +\frac{1}{2}(R_{\beta \lambda \mu }^{\alpha }\theta^{\lambda }\wedge \theta^{\mu}
      +R_{\beta \overline{\lambda }\overline{\mu }}^{\alpha }\theta^{\overline{\lambda }}
      \wedge \theta^{\overline{\mu }}) \\
    & \quad
      +R_{\beta \lambda \overline{\mu }}^{\alpha }\theta^{\lambda }\wedge \theta^{\overline{\mu }}
      +R_{\beta 0\mu }^{\alpha }\theta \wedge \theta^{\mu }
      +R_{\beta 0\overline{\mu }}^{\alpha }\theta \wedge \theta^{\overline{\mu }}.
  \end{align*}
  Since \(\nabla g_{h,\theta }=0\),
  \begin{equation*}
    R_{\overline{\alpha }BCD}=-R_{B\overline{\alpha }CD}, \quad
    R_{\alpha BCD}=-R_{B\alpha CD}.
  \end{equation*}
  The other properties in \eqref{3.9.3-4} are clear from \eqref{3.9.3-2}.
\end{proof}

We are now going to derive the first Bianchi identity from the structure equations.
By taking the exterior derivative of the second equation of \eqref {3.6}
and using the structure equations, we have
\begin{align*}
  0
  & = -\left( d\theta_{\beta }^{\alpha }\wedge \theta^{\beta }-\theta_{\beta }^{\alpha }\wedge d\theta^{\beta }\right)
    +\frac{1}{2}dT^{\alpha } \\
  & = -\biggl( -\theta_{\gamma }^{\alpha }\wedge \theta_{\beta }^{\gamma }
    +\frac{1}{2}\Omega_{\beta }^{\alpha }\biggr) \wedge \theta^{\beta }
    +\theta_{\beta }^{\alpha }\wedge \left( -\theta_{\gamma }^{\beta }\wedge \theta^{\gamma }
    +\frac{1}{2}T^{\beta }\right)
    +\frac{1}{2}dT^{\alpha } \\
  & = \theta_{\gamma }^{\alpha }\wedge \theta_{\beta }^{\gamma }\wedge \theta^{\beta }
    -\frac{1}{2}\Omega_{\beta }^{\alpha }\wedge \theta^{\beta}
    -\theta_{\beta }^{\alpha }\wedge \theta_{\gamma }^{\beta }\wedge \theta^{\gamma }
    +\frac{1}{2}\theta_{\beta }^{\alpha }\wedge T^{\beta }
    +\frac{1}{2}dT^{\alpha } \\
  & = -\frac{1}{2}\Omega_{\beta }^{\alpha }\wedge \theta^{\beta }
    +\frac{1}{2}\theta_{\beta }^{\alpha }\wedge T^{\beta }
    +\frac{1}{2}dT^{\alpha }
\end{align*}
that is,
\begin{equation}
  \frac{1}{2}\Omega_{\beta }^{\alpha }\wedge \theta^{\beta }
  =\frac{1}{2} \{dT^{\alpha }+T^{\beta }\wedge \theta_{\beta }^{\alpha }\}.  \label{3.11}
\end{equation}
Using \eqref{3.5}, we deduce from \eqref{3.11} the following
\begin{align}
  \label{3.12}
  \begin{aligned}
    \frac{1}{2}\Omega_{\beta }^{\alpha }\wedge \theta^{\beta }
    & = DA_{\beta}^{\alpha }\wedge \theta \wedge \theta^{\beta }
      + DA_{\overline{\beta }}^{\alpha }\wedge \theta \wedge \theta^{\overline{\beta }}
      + \frac{1}{2}DT_{\beta \gamma }^{\alpha }\wedge \theta^{\beta }\wedge \theta^{\gamma }  \\
    & \quad
      + 2\sqrt{-1}A_{\beta }^{\alpha }L_{\lambda \overline{\mu }}\theta^{\lambda}
      \wedge \theta^{\overline{\mu }}\wedge \theta^{\beta }
      + 2\sqrt{-1}A_{\overline{\beta }}^{\alpha }L_{\lambda \overline{\mu }}\theta^{\lambda}
      \wedge \theta^{\overline{\mu }}\wedge \theta^{\overline{\beta }} \\
    & \quad -\frac{1}{2}A_{\beta }^{\alpha }T_{\lambda \mu }^{\beta }\theta
      \wedge \theta^{\lambda }\wedge \theta^{\mu }
      -\frac{1}{2}A_{\overline{\beta }}^{\alpha }
      T_{\overline{\lambda }\overline{\mu }}^{\overline{\beta }}\theta
      \wedge \theta^{\overline{\lambda }}\wedge \theta^{\overline{\mu }}
      + \frac{1}{2}T_{\beta \gamma }^{\alpha }T^{\beta }\wedge \theta^{\gamma }
  \end{aligned}
\end{align}
where
\begin{align}
  \label{3.13}
  \begin{aligned}
    DA_{\beta }^{\alpha }
    & = dA_{\beta }^{\alpha }-A_{\gamma }^{\alpha }\theta_{\beta }^{\gamma }
      + A_{\beta }^{\gamma }\theta_{\gamma }^{\alpha }=A_{\beta ,0}^{\alpha }\theta
      + A_{\beta ,\gamma }^{\alpha }\theta^{\gamma }
      + A_{\beta ,\overline{\gamma }}^{\alpha }\theta^{\overline{\gamma }}   \\
    DA_{\overline{\beta }}^{\alpha }
    & = dA_{\overline{\beta }}^{\alpha }
      -A_{\overline{\gamma }}^{\alpha }\theta_{\overline{\beta }}^{\overline{\gamma }}
      + A_{\overline{\beta }}^{\gamma }\theta_{\gamma }^{\alpha }=A_{\overline{\beta },0}^{\alpha }\theta
      + A_{\overline{\beta },\gamma }^{\alpha }\theta^{\gamma }
      + A_{\overline{\beta },\overline{\gamma }}^{\alpha }\theta^{\overline{\gamma }} \\
    DT_{\beta \gamma }^{\alpha }
    & = dT_{\beta \gamma }^{\alpha }-T_{\lambda \gamma }^{\alpha }\theta_{\beta }^{\lambda }
      -T_{\beta \lambda }^{\alpha}\theta_{\gamma }^{\lambda }
      + T_{\beta \gamma }^{\lambda }\theta_{\lambda}^{\alpha }=T_{\beta \gamma ,0}^{\alpha }\theta
      + T_{\beta \gamma ,\lambda}^{\alpha }\theta^{\lambda }
      + T_{\beta \gamma ,\overline{\lambda }}^{\alpha}\theta^{\overline{\lambda }}.
  \end{aligned}
\end{align}
Comparing the forms of the same types on both sides of \eqref{3.12}, we obtain
\begin{equation}
  R_{\beta \lambda \mu }^{\alpha }\theta^{\beta }\wedge \theta^{\lambda}\wedge \theta^{\mu }
  =T_{\beta \lambda ,\mu }^{\alpha }\theta^{\beta}\wedge \theta^{\lambda }\wedge \theta^{\mu }
  + T_{\delta \beta }^{\alpha}T_{\lambda \mu }^{\delta }\theta^{\beta }\wedge
  \theta^{\lambda }\wedge \theta^{\mu }  \label{3.14}
\end{equation}
\begin{equation}
  R_{\beta \overline{\lambda }\overline{\mu }}^{\alpha }\theta^{\beta }\wedge
  \theta^{\overline{\lambda }}\wedge \theta^{\overline{\mu }}
  = 4\sqrt{-1}L_{\beta \overline{\lambda }}A_{\overline{\mu }}^{\alpha }\theta^{\beta}\wedge
  \theta^{\overline{\lambda }}\wedge \theta^{\overline{\mu }} \label{3.15}
\end{equation}
\begin{equation}
  R_{\beta \lambda \overline{\mu }}^{\alpha }\theta^{\beta }\wedge
  \theta^{\lambda }\wedge \theta^{\overline{\mu }}
  =\frac{1}{2}T_{\beta \lambda ,\overline{\mu }}^{\alpha }\theta^{\beta }\wedge
  \theta^{\lambda }\wedge \theta^{\overline{\mu }}
  + 2\sqrt{-1}A_{\beta }^{\alpha }L_{\lambda \overline{\mu }}\theta^{\beta }\wedge
  \theta^{\lambda }\wedge \theta^{\overline{\mu}}  \label{3.16}
\end{equation}
\begin{align}
  R_{\beta 0\mu }^{\alpha }\theta^{\beta }\wedge \theta \wedge \theta^{\mu }
  & = -A_{\beta ,\mu }^{\alpha }\theta^{\beta }\wedge \theta \wedge \theta^{\mu }
    -\frac{1}{2}T_{\beta \mu ,0}^{\alpha }\theta^{\beta }\wedge \theta \wedge \theta^{\mu }  \nonumber \\
  & \quad
    + \frac{1}{2}A_{\lambda }^{\alpha }T_{\beta \mu }^{\lambda }\theta^{\beta}\wedge
    \theta \wedge \theta^{\mu }-T_{\beta \lambda }^{\alpha }A_{\mu}^{\lambda }\theta^{\beta }
    \wedge \theta \wedge \theta^{\mu }  \label{3.17}
\end{align}
\begin{align}
  R_{\beta 0\overline{\mu }}^{\alpha }\theta^{\beta }\wedge \theta \wedge \theta^{\overline{\mu }}
  & = -A_{\beta ,\overline{\mu }}^{\alpha }\theta^{\beta }\wedge \theta \wedge \theta^{\overline{\mu }}
    + A_{\overline{\mu },\beta }^{\alpha }\theta^{\beta }\wedge \theta \wedge \theta^{\overline{\mu }}
    \nonumber \\
  & \quad
    + T_{\lambda \beta }^{\alpha }A_{\overline{\mu }}^{\lambda }\theta^{\beta}\wedge \theta \wedge
    \theta^{\overline{\mu }}  \label{3.18}
\end{align}
\begin{equation}
  A_{\overline{\beta },\overline{\gamma }}^{\alpha }\theta^{\overline{\beta }}\wedge
  \theta \wedge \theta^{\overline{\gamma }}
  =\frac{1}{2}A_{\overline{\lambda }}^{\alpha }
  T_{\overline{\beta }\overline{\gamma }}^{\overline{\lambda }}
  \theta^{\overline{\beta }}\wedge \theta \wedge \theta^{\overline{\gamma }}  \label{3.19}
\end{equation}
From \eqref{3.15}, we get
\begin{equation}
  R_{\beta \overline{\lambda }\overline{\mu }}^{\alpha }
  =2\sqrt{-1}(L_{\beta \overline{\lambda }}A_{\overline{\mu }}^{\alpha }
  -L_{\beta \overline{\mu }}A_{\overline{\lambda }}^{\alpha })  \label{3.20}
\end{equation}
Using \eqref{3.9.3-4} and \eqref{3.20}, we have
\begin{align}
  R_{\beta \lambda \mu }^{\alpha }
  & = R_{\overline{\alpha }\beta \lambda \mu}=-R_{\beta \overline{\alpha }\lambda \mu }  \nonumber \\
  & = -\overline{R_{\overline{\beta }\alpha \overline{\lambda }\overline{\mu }}}
    = -\overline{R_{\alpha \overline{\lambda }\overline{\mu }}^{\beta }}  \nonumber \\
  & = -\overline{2\sqrt{-1}(L_{\alpha \overline{\lambda }}A_{\overline{\mu }}^{\beta }
    -L_{\alpha \overline{\mu }}A_{\overline{\lambda }}^{\beta })} \nonumber \\
  & = 2\sqrt{-1} ( L_{\overline{\alpha }\lambda }A_{\mu }^{\overline{\beta }}
    -L_{\overline{\alpha }\mu }A_{\lambda }^{\overline{\beta }} ) \label{3.21}
\end{align}
Clearly \eqref{3.16} implies
\begin{equation}
  R_{\beta \lambda \overline{\mu }}^{\alpha }-R_{\lambda \beta \overline{\mu }}^{\alpha }
  =T_{\beta \lambda ,\overline{\mu }}^{\alpha }
  + 2\sqrt{-1}(A_{\beta}^{\alpha }L_{\lambda \overline{\mu }}
  -A_{\lambda }^{\alpha }L_{\beta \overline{\mu }}).  \label{3.22}
\end{equation}
It follows from \eqref{3.18} that
\begin{equation}
  R_{\beta 0\overline{\mu }}^{\alpha }
  =A_{\overline{\mu },\beta }^{\alpha}-A_{\beta ,\overline{\mu }}^{\alpha }
  -T_{\beta \lambda }^{\alpha }A_{\overline{\mu }}^{\lambda }  \label{3.23}
\end{equation}
Consequently, using \eqref{3.9.3-4} again, we deduce from \eqref{3.23} that
\begin{align}
  R_{\beta 0\mu }^{\alpha }
  & = R_{\overline{\alpha }\beta 0\mu }
    =-R_{\beta \overline{\alpha }0\mu }  \nonumber \\
  & = -\overline{R_{\overline{\beta }\alpha 0\overline{\mu }}}
    =-\overline{R_{\alpha 0\overline{\mu }}^{\beta }}
    \nonumber \\
  & = -\overline{(A_{\overline{\mu },\alpha }^{\beta }-A_{\alpha ,\overline{\mu}}^{\beta }
    -T_{\alpha \lambda }^{\beta }A_{\overline{\mu }}^{\lambda })} \nonumber \\
  & = A_{\overline{\alpha },\mu }^{\overline{\beta }}-A_{\mu ,\overline{\alpha }}^{\overline{\beta }}
    + T_{\overline{\alpha }\overline{\lambda }}^{\overline{\beta }}A_{\mu }^{\overline{\lambda }}.
    \label{3.24}
\end{align}
On the other hand, \eqref{3.17} yields
\begin{align}
  R_{\beta 0\mu }^{\alpha }-R_{\mu 0\beta }^{\alpha }
  & = A_{\mu ,\beta}^{\alpha }-A_{\beta ,\mu }^{\alpha }-T_{\beta \mu ,0}^{\alpha }
    + A_{\lambda}^{\alpha }T_{\beta \mu }^{\lambda }  \nonumber \\
  & \quad -(T_{\beta \lambda }^{\alpha }A_{\mu }^{\lambda }-T_{\mu \lambda }^{\alpha}A_{\beta }^{\lambda })
    \label{3.25}
\end{align}
By \eqref{3.19}, we obtain
\begin{equation}
  A_{\overline{\beta },\overline{\gamma }}^{\alpha }
  -A_{\overline{\gamma },\overline{\beta }}^{\alpha }
  =A_{\overline{\lambda }}^{\alpha }T_{\overline{\beta }\overline{\gamma }}^{\overline{\lambda }}
  \label{3.26}
\end{equation}

Collecting the above formulas, we have

\begin{lemma}
  \label{lem:5}                 
  The curvature and torsion components of the canonical connection satisfy the following relations:
  \begin{align*}
    R_{\beta \lambda \mu }^{\alpha }
    & = 2\sqrt{-1} ( L_{\overline{\alpha }\lambda }A_{\mu }^{\overline{\beta }}
      -L_{\overline{\alpha }\mu }A_{\lambda}^{\overline{\beta }}) \\
    R_{\beta \overline{\lambda }\overline{\mu }}^{\alpha }
    & = 2\sqrt{-1}(L_{\beta \overline{\lambda }}A_{\overline{\mu }}^{\alpha }
      -L_{\beta \overline{\mu }}A_{\overline{\lambda }}^{\alpha }) \\
    R_{\beta 0\mu }^{\alpha }
    & = A_{\overline{\alpha },\mu }^{\overline{\beta }}
      -A_{\mu ,\overline{\alpha }}^{\overline{\beta }}
      + T_{\overline{\alpha }\overline{\lambda }}^{\overline{\beta }}A_{\mu }^{\overline{\lambda }} \\
    R_{\beta 0\overline{\mu }}^{\alpha }
    & = A_{\overline{\mu },\beta }^{\alpha}-A_{\beta ,\overline{\mu }}^{\alpha }
      -T_{\beta \lambda }^{\alpha }A_{\overline{\mu }}^{\lambda } \\
    R_{\beta \lambda \overline{\mu }}^{\alpha }-R_{\lambda \beta \overline{\mu }}^{\alpha }
    & = T_{\beta \lambda ,\overline{\mu }}^{\alpha }
      + 2\sqrt{-1}(A_{\beta}^{\alpha }L_{\lambda \overline{\mu }}
      -A_{\lambda }^{\alpha }L_{\beta \overline{\mu }}) \\
    R_{\beta 0\mu }^{\alpha }-R_{\mu 0\beta }^{\alpha }
    & = A_{\mu ,\beta}^{\alpha }-A_{\beta ,\mu }^{\alpha }-T_{\beta \mu ,0}^{\alpha }
      + A_{\lambda}^{\alpha }T_{\beta \mu }^{\lambda } \\
    & \quad -(T_{\beta \lambda }^{\alpha }A_{\mu }^{\lambda }-T_{\mu \lambda }^{\alpha}A_{\beta }^{\lambda }).
  \end{align*}
  and
  \begin{equation*}
    A_{\overline{\beta },\overline{\gamma }}^{\alpha }-A_{\overline{\gamma },\overline{\beta }}^{\alpha }
    =A_{\overline{\lambda }}^{\alpha }T_{\overline{\beta }\overline{\gamma }}^{\overline{\lambda }}.
  \end{equation*}
  In particular, if \(\sigma =0\) and \(\Theta^{(2,0)}=0\), then
  \begin{align*}
    R_{\beta 0\mu }^{\alpha }
    & = -A_{\mu ,\overline{\alpha }}^{\overline{\beta }}, \quad
      R_{\beta 0\overline{\mu }}^{\alpha }
      =A_{\overline{\mu },\beta}^{\alpha },  \\ R_{\beta \lambda \overline{\mu }}^{\alpha }
    & = R_{\lambda \beta \overline{\mu }}^{\alpha }, \quad
      A_{\overline{\beta },\overline{\gamma }}^{\alpha }
      =A_{\overline{\gamma },\overline{\beta }}^{\alpha }.
  \end{align*}
\end{lemma}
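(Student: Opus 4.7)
The plan is to derive all the stated identities from the \emph{first Bianchi identity} applied to the canonical connection, which in turn comes from differentiating the first structure equation in Lemma~\ref{lem:3}. Concretely, I would start from $d\theta^{\alpha}=-\theta_{\beta}^{\alpha}\wedge\theta^{\beta}+\tfrac12 T^{\alpha}$ and take the exterior derivative of both sides. Using $d(d\theta^{\alpha})=0$, substituting $d\theta_{\beta}^{\alpha}=-\theta_{\gamma}^{\alpha}\wedge\theta_{\beta}^{\gamma}+\tfrac12\Omega_{\beta}^{\alpha}$ from Lemma~\ref{lem:4}, and substituting $d\theta^{\beta}$ back from the first structure equation, the connection-form terms cancel telescopically (as they do in the standard derivation of Bianchi identities), leaving
\[
\tfrac12\,\Omega_{\beta}^{\alpha}\wedge\theta^{\beta}
=\tfrac12\bigl(dT^{\alpha}+T^{\beta}\wedge\theta_{\beta}^{\alpha}\bigr).
\]

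Next I would expand both sides using the explicit shape of $T^{\alpha}$ from \eqref{3.5} (i.e. $T^{\alpha}=2A_{\beta}^{\alpha}\theta\wedge\theta^{\beta}+2A_{\bar\beta}^{\alpha}\theta\wedge\theta^{\bar\beta}+T_{\beta\gamma}^{\alpha}\theta^{\beta}\wedge\theta^{\gamma}$) and the formula $d\theta=2\sqrt{-1}L_{\alpha\bar\beta}\theta^{\alpha}\wedge\theta^{\bar\beta}$. The covariant differentials $DA_{\beta}^{\alpha}$, $DA_{\bar\beta}^{\alpha}$, $DT_{\beta\gamma}^{\alpha}$ defined in \eqref{3.13} are introduced precisely so that the remaining pieces reorganize into a sum of $3$-forms of pure type. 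The core step is then to \emph{compare coefficients by type}: the $3$-form identity splits into six independent components, namely
$\theta^{\beta}\wedge\theta^{\lambda}\wedge\theta^{\mu}$,
$\theta^{\beta}\wedge\theta^{\bar\lambda}\wedge\theta^{\bar\mu}$,
$\theta^{\beta}\wedge\theta^{\lambda}\wedge\theta^{\bar\mu}$,
$\theta\wedge\theta^{\beta}\wedge\theta^{\mu}$,
$\theta\wedge\theta^{\beta}\wedge\theta^{\bar\mu}$, and
$\theta\wedge\theta^{\bar\beta}\wedge\theta^{\bar\gamma}$,
each of which yields one algebraic/differential relation among the $R$'s, $A$'s and $T$'s. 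Reading off these six relations produces directly the formulas for $R_{\beta\bar\lambda\bar\mu}^{\alpha}$, $R_{\beta 0\bar\mu}^{\alpha}$, the skew-parts $R_{\beta\lambda\bar\mu}^{\alpha}-R_{\lambda\beta\bar\mu}^{\alpha}$ and $R_{\beta 0\mu}^{\alpha}-R_{\mu 0\beta}^{\alpha}$, and the torsion identity $A_{\bar\beta,\bar\gamma}^{\alpha}-A_{\bar\gamma,\bar\beta}^{\alpha}=A_{\bar\lambda}^{\alpha}T_{\bar\beta\bar\gamma}^{\bar\lambda}$.

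To obtain the remaining two stated formulas—the expression for $R_{\beta\lambda\mu}^{\alpha}$ and for $R_{\beta 0\mu}^{\alpha}$ individually—I would invoke the metric-compatibility symmetries collected in \eqref{3.9.3-4}. Namely, $\nabla g_{h,\theta}=0$ gives $R_{\bar\alpha B CD}=-R_{B\bar\alpha CD}$, which combined with the reality relation $\overline{R_{BCD}^{A}}=R_{\bar B\bar C\bar D}^{\bar A}$ lets me convert the already-derived formula for $R_{\beta\bar\lambda\bar\mu}^{\alpha}$ into the formula for $R_{\beta\lambda\mu}^{\alpha}$, and similarly convert the formula for $R_{\beta 0\bar\mu}^{\alpha}$ into the one for $R_{\beta 0\mu}^{\alpha}$. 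The final statement under $\sigma=0$ and $\Theta^{(2,0)}=0$ is then a direct specialization: by \eqref{3.9-2-1} and \eqref{eqn8-8} the vanishing of $\sigma$ forces all $A_{\beta}^{\alpha}$ (and $A_{\bar\beta}^{\bar\alpha}$) to vanish, while $\Theta^{(2,0)}=0$ kills all $T_{\beta\gamma}^{\alpha}$, so the general formulas collapse to the reduced ones.

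The main obstacle is purely bookkeeping: keeping the (1,0)/(0,1)/vertical types separated on both sides of the Bianchi identity, and making sure that in the $\theta\wedge\theta^{\beta}\wedge\theta^{\bar\mu}$ slot one correctly picks up the contributions of both $DA_{\beta}^{\alpha}$ and $DA_{\bar\mu}^{\alpha}$ as well as the cross-term from $T_{\beta\lambda}^{\alpha}A_{\bar\mu}^{\lambda}$ coming from $T^{\beta}\wedge\theta_{\beta}^{\alpha}$ after expansion. Once the three-form on the right of the Bianchi identity is honestly sorted by type, each of the seven displayed formulas falls out by equating coefficients, and no further geometric input is needed.
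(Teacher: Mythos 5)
Your proposal follows the paper's own derivation essentially verbatim: differentiate the first structure equation to obtain the Bianchi identity $\tfrac12\Omega_{\beta}^{\alpha}\wedge\theta^{\beta}=\tfrac12(dT^{\alpha}+T^{\beta}\wedge\theta_{\beta}^{\alpha})$, expand via \eqref{3.5} and the covariant differentials \eqref{3.13}, sort the resulting $3$-form identity into the six pure types, and then recover $R_{\beta\lambda\mu}^{\alpha}$ and $R_{\beta 0\mu}^{\alpha}$ from their conjugate counterparts using the symmetries \eqref{3.9.3-4}, exactly as in \eqref{3.14}--\eqref{3.26}. The specialization under $\sigma=0$ and $\Theta^{(2,0)}=0$ is also handled the same way (note only that $A_{\beta}^{\alpha}=A_{\overline{\beta}}^{\overline{\alpha}}=0$ is read off most directly from \eqref{3.9-2} itself), so the proposal is correct and matches the paper's proof.
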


We may introduce the following curvature notions.
First, the sectional curvature of \(2\)-planes \(P\subset T(M)\) is defined by
\begin{equation}
  K(P)=\langle R(\eta_{1},\eta_{2})\eta_{2},\eta_{1}\rangle  \label{3.26-1}
\end{equation}
where \(\{\eta_{1},\eta_{2}\}\) is any orthonormal basis for \(P\).
We say that a \(2\)-plane \(P\) is horizontal if \(P\subset \) \(H(M)\).
Then the sectional curvature of horizontal \(2\)-planes is called the horizontal sectional curvature and denoted by \(K^{H}(\cdot )\).
In particular,
the sectional curvature of \(J\)-invariant horizontal \(2\)-planes is referred to as the pseudo-Hermitian sectional curvature,
and denoted by \(K_{\theta }(\cdot )\).
Clearly the pseudo-Hermitian sectional curvature is an analogue of the holomorphic sectional curvature of Hermitian manifolds.

Similarly, the (total) Ricci curvature is defined by
\begin{align}
  Ric(X,Y)
  & = trace\{Z\longmapsto R(Z,Y)X\}  \nonumber \\
  & = \sum_{\alpha }\biggl\{\langle R(\eta_{\alpha },Y)X,\eta_{\overline{\alpha }}\rangle
    + \langle R(\eta_{\overline{\alpha }},Y)X,\eta_{\alpha}\rangle
    + \langle R(\eta_{0},Y)X,\eta_{0}\rangle \biggr\}  \nonumber \\
  & = \sum_{\alpha }\biggl\{\langle R(\eta_{\alpha },Y)X,\eta_{\overline{\alpha }}\rangle
    + \langle R(\eta_{\overline{\alpha }},Y)X,\eta_{\alpha}\rangle \biggr\}  \label{3.27}
\end{align}
for any \(X,Y\in T(M)\).
It turns out that \(Ric(X,Y)\) is not symmetric in general. Set
\begin{equation*}
  R_{AB}=Ric(\eta_{A},\eta_{B}).
\end{equation*}
From \eqref{3.9-3-1} and \eqref{3.27}, we have
\begin{align*}
  R_{\lambda \overline{\mu}}
  & = \sum_{\alpha}\biggl\{
    \langle R(\eta_{\alpha},\eta_{\overline{\mu}})\eta_{\lambda},\eta_{\overline{\alpha}}\rangle
    + \langle R(\eta_{\overline{\alpha}},\eta_{\overline{\mu}})\eta_{\lambda},\eta_{\alpha}\rangle \biggr\} \\
  & = \sum_{\alpha}\langle R(\eta_{\alpha},\eta_{\overline{\mu}})\eta_{\lambda},\eta_{\overline{\alpha}}\rangle
    =R_{\lambda \alpha \overline{\mu}}^{\alpha}
\end{align*}
and
\begin{align*}
  R_{\overline{\mu }\lambda }
  & = \sum_{\alpha }\biggl\{
    \langle R(\eta_{\alpha },\eta_{\lambda })\eta_{\overline{\mu }},\eta_{\overline{\alpha }}\rangle
    + \langle R(\eta_{\overline{\alpha }},
    \eta_{\lambda })\eta_{\overline{\mu }},\eta_{\alpha }\rangle \biggr\} \\
  & = \sum_{\alpha }\langle R(\eta_{\overline{\alpha }},
    \eta_{\lambda })\eta_{\overline{\mu }},\eta_{\alpha }\rangle
    =R_{\overline{\mu }\overline{\alpha }\lambda }^{\overline{\alpha }}
\end{align*}
For any
\(X=a^{\lambda }\eta_{\lambda }+b^{\overline{\lambda }}\eta_{\overline{\lambda }}\),
\(Y=c^{\mu }\eta_{\mu }+d^{\overline{\mu }}\eta_{\overline{\mu }}\in H(M)\otimes C\),
we define a \(2\)-tensor
\begin{align*}
  Ric_{b}(X,Y)
  =R_{\lambda \overline{\mu }}a^{\lambda }d^{\overline{\mu }}
  + b^{\overline{\lambda }}c^{\mu }R_{\overline{\lambda }\mu }
\end{align*}
whose components are
\begin{align*}
  Ric_{b}(\eta_{\lambda },\eta_{\overline{\mu }})
  & = R_{\lambda \overline{\mu }}, \quad
    Ric_{b}(\eta_{\overline{\mu }},\eta_{\lambda })
    =R_{\overline{\mu }\lambda } \\ Ric_{b}(\eta_{\lambda },\eta_{\mu })
  & = Ric_{b}(\eta_{\overline{\lambda }},\eta_{\overline{\mu }})
    =0.
\end{align*}
Clearly \(Ric_{b}(X,Y)\in \mathbb{R} \) if \(X\),
\(Y\in H(M)\).
The 2-tensor \(Ric_{b}\) will be referred to as the pseudo-Hermitian Ricci tensor,
which may not be symmetric too.
However we have the following

\begin{lemma}
  \label{lem:6}                 
  If \(\sigma =0\) and \(\Theta^{(2,0)}=0\),
  then \(Ric_{b}\) is a symmetric fiberwise \(2\)-tensor on \(H(M)\).
\end{lemma}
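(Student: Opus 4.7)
The plan is to reduce the symmetry of $Ric_{b}$ as a real 2-tensor on $H(M)$ to the single scalar identity $R_{\lambda \overline{\mu }}=R_{\overline{\mu }\lambda }$, and then to derive that identity by combining the special curvature symmetry supplied by Lemma \ref{lem:5} (valid under $\sigma =0$, $\Theta^{(2,0)}=0$) with the standard antisymmetries of the curvature tensor coming from $\nabla g_{h,\theta }=0$.

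For any real $X,Y\in H(M)$ written in complex form as $X=a^{\lambda }\eta_{\lambda }+b^{\overline{\lambda }}\eta_{\overline{\lambda }}$, $Y=c^{\mu }\eta_{\mu }+d^{\overline{\mu }}\eta_{\overline{\mu }}$ with $b^{\overline{\lambda }}=\overline{a^{\lambda }}$ and $d^{\overline{\mu }}=\overline{c^{\mu }}$, a direct expansion of the defining formula for $Ric_{b}$ yields
\begin{equation*}
  Ric_{b}(X,Y)-Ric_{b}(Y,X)=(R_{\lambda \overline{\mu }}-R_{\overline{\mu }\lambda })a^{\lambda }d^{\overline{\mu }}+(R_{\overline{\lambda }\mu }-R_{\mu \overline{\lambda }})b^{\overline{\lambda }}c^{\mu }.
\end{equation*}
The two bracketed scalars are complex conjugates of each other (since the curvature tensor is real), so the symmetry problem on $H(M)$ reduces to proving $R_{\lambda \overline{\mu }}=R_{\overline{\mu }\lambda }$ for all $\lambda ,\mu $.

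To establish this identity I would start from the trace definitions $R_{\lambda \overline{\mu }}=\sum_{\alpha } R_{\lambda \alpha \overline{\mu }}^{\alpha }$ and $R_{\overline{\mu }\lambda }=\sum_{\alpha } R_{\overline{\mu }\overline{\alpha }\lambda }^{\overline{\alpha }}$. By Lemma \ref{lem:5} the hypotheses $\sigma =0$ and $\Theta^{(2,0)}=0$ give $R_{\beta \lambda \overline{\mu }}^{\alpha }=R_{\lambda \beta \overline{\mu }}^{\alpha }$; complex conjugation produces the companion relation $R_{\overline{\beta }\overline{\lambda }\mu }^{\overline{\alpha }}=R_{\overline{\lambda }\overline{\beta }\mu }^{\overline{\alpha }}$. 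Applying these two symmetries with $\beta =\alpha $ inside the traces one rewrites
\begin{equation*}
  R_{\lambda \overline{\mu }}=\sum_{\alpha } R_{\alpha \lambda \overline{\mu }}^{\alpha }, \qquad R_{\overline{\mu }\lambda }=\sum_{\alpha } R_{\overline{\alpha }\overline{\mu }\lambda }^{\overline{\alpha }}.
\end{equation*}

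The final step is a brief index manipulation in all-indices-down form. Using the conversions $R_{\overline{A}BCD}=R_{BCD}^{A}$ and $R_{ABCD}=R_{BCD}^{\overline{A}}$, together with the metric-compatibility antisymmetry $R_{ABCD}=-R_{BACD}$ and the antisymmetry in the last two slots $R_{ABCD}=-R_{ABDC}$ from \eqref{3.9.3-4}, one computes
\begin{equation*}
  R_{\alpha \lambda \overline{\mu }}^{\alpha }=R_{\overline{\alpha }\alpha \lambda \overline{\mu }}=-R_{\alpha \overline{\alpha }\lambda \overline{\mu }}=R_{\alpha \overline{\alpha }\overline{\mu }\lambda }=R_{\overline{\alpha }\overline{\mu }\lambda }^{\overline{\alpha }}.
\end{equation*}
Summation over $\alpha $ then gives $R_{\lambda \overline{\mu }}=R_{\overline{\mu }\lambda }$, hence $Ric_{b}$ is symmetric on $H(M)$. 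The only real obstacle is careful index-type bookkeeping: the Lemma \ref{lem:5} symmetry applies strictly to the $(1,0),(1,0),(0,1)$ lower-index pattern (and its conjugate), so at each swap one must verify that the indices involved carry the correct types before invoking it.
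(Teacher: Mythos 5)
Your proposal is correct and follows essentially the same route as the paper: both reduce the claim to the scalar identity $R_{\lambda\overline{\mu}}=R_{\overline{\mu}\lambda}$ and prove it by combining the symmetry $R_{\beta\lambda\overline{\mu}}^{\alpha}=R_{\lambda\beta\overline{\mu}}^{\alpha}$ from Lemma \ref{lem:5} (and its conjugate) with the index-lowering conventions and the antisymmetries in \eqref{3.9.3-4}. The paper simply runs the same chain of equalities starting from $R_{\overline{\mu}\lambda}$ and ending at $R_{\lambda\overline{\mu}}$, whereas you make the reduction step explicit and traverse the chain in the opposite direction; the content is identical.
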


\begin{proof}
  Using Lemmas \ref{lem:4} and \ref{lem:5}, we deduce that
  \begin{align*}
    R_{\overline{\mu }\lambda }
    & = R_{\overline{\mu }\overline{\alpha }\lambda}^{\overline{\alpha }}
      =R_{\overline{\alpha }\overline{\mu }\lambda }^{\overline{\alpha }} \\
    & = \sum_{\alpha }R_{\alpha \overline{\alpha }\overline{\mu }\lambda}
      =\sum_{\alpha }R_{\overline{\alpha }\alpha \lambda \overline{\mu }} \\
    & = R_{\alpha \lambda \overline{\mu }}^{\alpha }
      =R_{\lambda \alpha \overline{\mu }}^{\alpha } \\
    & = R_{\lambda \overline{\mu }}.
  \end{align*}
  This implies immediately that \(Ric_{b}(X,Y)=Ric_{b}(Y,X)\) for any \(X,Y\in H(M)\).
\end{proof}

The (total) scalar curvature is defined by
\begin{equation*}
  \rho_{M}
  =\sum_{\alpha }\left( R_{\alpha \overline{\alpha }}
    + R_{\overline{\alpha }\alpha }\right) .
\end{equation*}
Using \eqref{3.9.3-4}, we find
\begin{equation*}
  \sum_{\alpha }R_{\alpha \overline{\alpha }}
  =\sum_{\alpha ,\beta }R_{\overline{\beta }\alpha \beta \overline{\alpha }}
  =\sum_{\alpha ,\beta}R_{\alpha \overline{\beta }\overline{\alpha }\beta }
  =\sum_{\beta }R_{\overline{\beta }\beta }.
\end{equation*}
Set \(\rho =\sum_{\alpha }R_{\alpha \overline{\alpha }}\).
This function \(\rho\) is called the pseudo-Hermitian scalar curvature.

\section{Pseudo-K\"{a}hler manifolds} \label{sec:pseudo-kahl-mfd}

Recall that the torsion \(T_{\nabla }\) of a linear connection \(\nabla \) in \((M,\theta ,J)\) is said to be \emph{pure} (\cite{dragomir2007diff}) if
\begin{align}
  T_{\nabla }(Z,W)
  & = 0,  \label{4.1} \\ T_{\nabla }(Z,\overline{W})
  & = 2iL_{\theta }(Z,\overline{W})\xi , \label{4.2} \\ J\circ \tau
  + \tau \circ J
  & = 0  \label{4.3}
\end{align}
for any \(Z,W\in H^{1,0}(M)\),
where \(\tau \) is defined as in \eqref{eqn8-6}.
In view of Lemma \ref{lem:1},
it is easy to see that if a connection \(\nabla \) in \((M,\theta ,J)\) preserves the decomposition \eqref{eqn8},
then its torsion is pure if and only if \(\Theta^{(2,0)}=0\),
\(\Theta^{(1,1)}=0\) and $ \sigma =0$.

Using the frame field \(\{\eta_{A}\}\), we have
\begin{equation*}
  \Omega_{h,\theta }(\eta_{\alpha },\eta_{\overline{\beta }})
  =h(J\eta_{\alpha },\eta_{\overline{\beta }})
  =\sqrt{-1}\delta_{\alpha \overline{\beta }}.
\end{equation*}
and thus
\begin{align}
  \Omega_{h,\theta }
  =2\sqrt{-1}\delta_{\alpha \overline{\beta }}\theta^{\alpha }\wedge \theta^{\overline{\beta }}
  =2\sqrt{-1}\sum_{\alpha }\theta^{\alpha }\wedge \theta^{\overline{\alpha }}.  \label{4.4}
\end{align}

\begin{theorem}
  \label{thm:6}                 
  Let \((M^{2m+1},\theta ,J,h)\) be a pseudo-Hermitian manifold
  and let \(\nabla \) be its canonical pseudo-Hermitian connection. Then

  \begin{enumerate}[(i)]
  \item \(i_{\xi }d\Omega_{h,\theta }=0\)
    if and only if \(\sigma =0\) and \(\tau \) is symmetric with respect to \(g_{h,\theta }\).

  \item \(M\) is of horizontally K\"{a}hler type if and only if \(\Theta^{(2,0)}=0 \).

  \item \(M\) is of pseudo-K\"{a}hler type if and only if \(\Theta^{(2,0)}=0\),
    \( \sigma =0\) and \(\tau \) is symmetric with respect to \(g_{h,\theta }\).
  \end{enumerate}
\end{theorem}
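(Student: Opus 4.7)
The plan is to compute $d\Omega_{h,\theta}$ explicitly in the coframe $\{\theta,\theta^{\alpha},\theta^{\overline{\alpha}}\}$ and then decompose the result into a piece wedged with $\theta$ (which controls $i_{\xi}d\Omega_{h,\theta}$) and a purely horizontal piece (which controls $d_{H}\Omega_{h,\theta}$). Starting from \eqref{4.4}, I take the exterior derivative to get
\begin{equation*}
d\Omega_{h,\theta}=2\sqrt{-1}\sum_{\alpha}\bigl(d\theta^{\alpha}\wedge\theta^{\overline{\alpha}}-\theta^{\alpha}\wedge d\theta^{\overline{\alpha}}\bigr),
\end{equation*}
then plug in the second equation of \eqref{3.6}. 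The connection contribution $-\theta_{\beta}^{\alpha}\wedge\theta^{\beta}\wedge\theta^{\overline{\alpha}}+\theta^{\alpha}\wedge\theta_{\overline{\beta}}^{\overline{\alpha}}\wedge\theta^{\overline{\beta}}$ cancels after relabeling and invoking the skew-Hermitian relation $\theta_{\beta}^{\alpha}+\theta_{\overline{\alpha}}^{\overline{\beta}}=0$ from \eqref{3.6}. What remains is
\begin{equation*}
d\Omega_{h,\theta}=2\sqrt{-1}\,\theta\wedge\sum_{\alpha}(\tau^{\alpha}\wedge\theta^{\overline{\alpha}}+\theta^{\alpha}\wedge\tau^{\overline{\alpha}})+\sqrt{-1}\sum_{\alpha}\bigl(T_{\beta\gamma}^{\alpha}\theta^{\beta}\wedge\theta^{\gamma}\wedge\theta^{\overline{\alpha}}-T_{\overline{\beta}\overline{\gamma}}^{\overline{\alpha}}\theta^{\alpha}\wedge\theta^{\overline{\beta}}\wedge\theta^{\overline{\gamma}}\bigr).
\end{equation*}

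For (i), $i_{\xi}d\Omega_{h,\theta}=2\sqrt{-1}\sum_{\alpha}(\tau^{\alpha}\wedge\theta^{\overline{\alpha}}+\theta^{\alpha}\wedge\tau^{\overline{\alpha}})$. Substituting \eqref{3.9-1}, I split this 2-form into its $(2,0)$, $(1,1)$, and $(0,2)$ components. Using \eqref{3.9-2-1}, the $(1,1)$-coefficient of $\theta^{\beta}\wedge\theta^{\overline{\alpha}}$ collapses to $2A_{\beta}^{\alpha}$, so its vanishing is $A_{\beta}^{\alpha}=0$, which by \eqref{3.9-2} is precisely $\sigma=0$. Meanwhile the $(2,0)$ and $(0,2)$ parts vanish exactly when $A_{\beta}^{\overline{\alpha}}=A_{\alpha}^{\overline{\beta}}$ and its conjugate hold; unpacking these against $\{\eta_{\beta},\eta_{\gamma}\}$ and $\{\eta_{\overline{\beta}},\eta_{\overline{\gamma}}\}$ (with the automatic relation $A_{\beta}^{\alpha}=A_{\overline{\alpha}}^{\overline{\beta}}$ from \eqref{3.9-2-1} supplying the mixed-type symmetry) yields exactly that $\tau$ is symmetric with respect to $g_{h,\theta}$.

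For (ii), the horizontal remainder is a $3$-form on $H(M)$ whose $(2,1)$- and $(1,2)$-components are proportional to $T_{\beta\gamma}^{\alpha}$ and $T_{\overline{\beta}\overline{\gamma}}^{\overline{\alpha}}$ respectively. Since $T_{\beta\gamma}^{\alpha}$ is already antisymmetric in $\beta,\gamma$, its vanishing is equivalent to $T_{\beta\gamma}^{\alpha}=0$, and by \eqref{3.4} this is $\Theta^{(2,0)}=0$; the conjugate part vanishes simultaneously. Since the pieces live in complementary type components of $\bigwedge^{3}T^{\ast}M\otimes\mathbb{C}$, these are the only conditions. Finally, (iii) follows immediately by combining (i) and (ii) via the characterization \eqref{eqn6-2} of the pseudo-Kähler condition as the conjunction $i_{\xi}d\Omega_{h,\theta}=0$ and $d_{H}\Omega_{h,\theta}=0$.

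The main obstacle is the bookkeeping in the $(1,1)$-part of (i): naively one obtains a sum $\sum(A_{\beta}^{\alpha}\theta^{\beta}\wedge\theta^{\overline{\alpha}}+A_{\overline{\beta}}^{\overline{\alpha}}\theta^{\alpha}\wedge\theta^{\overline{\beta}})$ in which the two terms might be expected to cancel; \eqref{3.9-2-1} shows instead that they reinforce, so that vanishing truly forces $\sigma=0$ rather than a milder symmetry. Separating cleanly what the canonical-connection constraint \eqref{eqn8-8} already imposes (the identity $A_{\beta}^{\alpha}=A_{\overline{\alpha}}^{\overline{\beta}}$, which is one half of the $\tau$-symmetry) from the new content forced by $i_{\xi}d\Omega_{h,\theta}=0$ is the delicate step.
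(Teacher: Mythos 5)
Your proposal is correct and follows essentially the same route as the paper: differentiate \eqref{4.4}, substitute the first structure equations \eqref{3.6} so the connection forms cancel via $\theta_{\beta}^{\alpha}+\theta_{\overline{\alpha}}^{\overline{\beta}}=0$, read off $i_{\xi}d\Omega_{h,\theta}$ and the horizontal remainder, and use \eqref{3.9-2-1} to identify the $(1,1)$-part with $\sigma$ and the $(2,0)/(0,2)$-parts with the symmetry of $\tau$, while the purely horizontal $3$-form gives $\Theta^{(2,0)}=0$. Your closing observation that the two $(1,1)$-terms reinforce rather than cancel is exactly the step the paper records as $(A_{\beta}^{\alpha}+A_{\overline{\alpha}}^{\overline{\beta}})=2A_{\beta}^{\alpha}$.
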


\begin{proof}
  Taking exterior derivative of \eqref{4.4} and using \eqref{3.6} yield
  \begin{align}
    \frac{1}{2\sqrt{-1}}d\Omega_{h,\theta }
    & = \sum_{\alpha }(d\theta^{\alpha}\wedge \theta^{\overline{\alpha }}
      -\theta^{\alpha }\wedge d\theta^{\overline{\alpha }})  \nonumber \\
    & = \sum_{\alpha }(-\theta_{\beta }^{\alpha }\wedge \theta^{\beta }\wedge
      \theta^{\overline{\alpha }}
      + \theta^{\alpha }\wedge \theta_{\overline{\beta }}^{\overline{\alpha }}\wedge
      \theta^{\overline{\beta }})
      + \frac{1}{2}\sum_{\alpha }(T^{\alpha }\wedge \theta^{\overline{\alpha }}
      -\theta^{\alpha }\wedge T^{\overline{\alpha }})  \nonumber \\
    & = \sum_{\alpha }\left( \theta \wedge A_{\beta }^{\alpha }\theta^{\beta}
      + \theta \wedge A_{\overline{\beta }}^{\alpha }\theta^{\overline{\beta }}
      + \frac{1}{2}T_{\lambda \mu }^{\alpha }\theta^{\lambda }\wedge
      \theta^{\mu}\right) \wedge \theta^{\overline{\alpha }}  \nonumber \\
    & \quad -\sum_{\alpha }\left( \theta \wedge
      A_{\overline{\beta }}^{\overline{\alpha }}\theta^{\overline{\beta }}
      + \theta \wedge A_{\beta }^{\overline{\alpha }}\theta^{\beta }
      + T_{\overline{\lambda }\overline{\mu }}^{\overline{\alpha }}\theta^{\overline{\lambda }}\wedge
      \theta^{\overline{\mu }}\right) \wedge \theta^{\alpha }.  \label{4.2-1}
  \end{align}
  Using \eqref{4.2-1} and \eqref{3.9-2-1}, we deduce
  \begin{align*}
    i_{\xi }d\Omega_{h,\theta }
    & = \sum_{\alpha }(A_{\beta }^{\alpha }
      + A_{\overline{\alpha }}^{\overline{\beta }})\theta^{\beta }\wedge \theta^{\overline{\alpha }}
      + \sum_{\alpha }\left( A_{\overline{\beta }}^{\alpha}\theta^{\overline{\beta }}\wedge
      \theta^{\overline{\alpha }}-A_{\beta }^{\overline{\alpha }}\theta^{\beta }\wedge
      \theta^{\alpha }\right) \\
    & = 2\sum_{\alpha }A_{\beta }^{\alpha }\theta^{\beta }\wedge \theta^{\overline{\alpha }}
      + \sum_{\alpha }\left( A_{\overline{\beta }}^{\alpha}\theta^{\overline{\beta }}\wedge
      \theta^{\overline{\alpha }}-A_{\beta }^{\overline{\alpha }}\theta^{\beta }\wedge
      \theta^{\alpha }\right) .
  \end{align*}
  Thus \(i_{\xi }\Omega_{h,\theta }=0\) if and only if \(A_{\beta }^{\alpha }=0\) ,
  \(A_{\alpha \beta }=A_{\beta \alpha }\).
  The latter is just
  \begin{equation*}
    \langle \tau (\eta_{\beta }),\eta_{\alpha }\rangle
    =\langle \eta_{\beta},\tau (\eta_{\alpha })\rangle .
  \end{equation*}
  This proves (i).

  From \eqref{4.2-1},
  we also find that \(d_{H}\Omega_{h,\theta }=0\) if and only if
  \begin{align*}
    0
    =\sum_{\alpha }T_{\lambda \mu }^{\alpha }\theta^{\lambda }\wedge
    \theta^{\mu }\wedge \theta^{\overline{\alpha }}
    =\sum_{\alpha }T_{\overline{\lambda }\overline{\mu }}^{\overline{\alpha }}
    \theta^{\overline{\lambda }}\wedge \theta^{\overline{\mu }}\wedge \theta^{\alpha },
  \end{align*}
  that is, \(\Theta^{(2,0)}=0\).

  Clearly (iii) follows immediately from (i) and (ii). Hence we complete the proof of Theorem \ref{thm:6}.
\end{proof}

\begin{remark}
  \label{rmk:2}                 
  \begin{enumerate}[(a)]
  \item Using \eqref{3.9-1} and \eqref{3.9-2-1}, we have
    \begin{align*}
      tr_{h,\theta }(\tau )
      & = \sum_{\alpha }[\langle \tau (\eta_{\alpha }),\eta_{\overline{\alpha }}\rangle
        + \langle \tau (\eta_{\overline{\alpha }}),\eta_{\alpha }\rangle ] \\
      & = A_{\alpha }^{\alpha }
        + A_{\overline{\alpha }}^{\overline{\alpha }}
        =2A_{\alpha }^{\alpha },
    \end{align*}
    which implies that if \(\sigma =0\),
    then \(tr_{h,\theta }\left( \tau \right) =0 \).

  \item By \eqref{eqn6-1} and Cartan's magic formula,
    we see that \(i_{\xi}d\Omega_{h,\theta }=0\) if and only if \(\mathcal{L}_{\xi }\Omega_{h,\theta}=0\);

  \item From Theorem \ref{thm:6} (iii),
    we know that \(T_{\nabla }\) of a pseudo-K\"{a}hler manifold is pure.
    In particular,
    if \(h=L_{\theta }\),
    that is \( \Omega_{h,\theta }=d\theta \) (see Example \ref{ex:1}),
    then the canonical pseudo-Hermitian connection is just the Tanaka-Webster connection,
    whose torsion has been known to satisfy the pure conditions,
    the symmetric property about \(\tau\) and \(tr_{g_{\theta }}\left( \tau \right) =0\)
    (cf., e.g., Theorem 1.3 and Lemma 1.4 in \cite{dragomir2007diff}).
  \item A consequence of Lemma \ref{lem:6} and Theorem \ref{thm:6} (iii) is that \(Ric_{b}\) is symmetric when \(M\) is of pseudo-K\"{a}hler type.
    In this case,
    if \(Ric_{b}=\lambda h\) for some constant \(\lambda \),
    then \(M\) is referred to be \emph{pseudo-Einstein}.
  \end{enumerate}
\end{remark}

By an infinitesimal CR automorphism we mean a real vector field
whose (local) \(1\)-parameter group of (local) transformations of \(M\)
consists of (local) CR automorphisms
(cf. page 56 in \cite{dragomir2007diff}).
From \eqref{eqn11} and \eqref{eqn15},
we get immediately

\begin{lemma}
  \label{lem:7}                 
  Let \((M^{2m+1},\theta ,J,h)\) be a pseudo-Hermitian manifold with \( \sigma =0\).
  Then \(\tau =0\) if and only if \(\xi \) is an infinitesimal CR automorphism,
  that is,
  the \((1,1)\)-tensor field \(J\) is constant along the Reeb leaves.
\end{lemma}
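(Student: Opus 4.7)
The plan is to exploit the explicit formula for $\tau$ in terms of $\sigma$ and $\mathcal{L}_\xi J$ established earlier, namely
\begin{equation*}
  \tau(X) = -\tfrac{1}{2}\bigl[J \circ \sigma(X) + J \circ (\mathcal{L}_\xi J)(X)\bigr]
\end{equation*}
from \eqref{eqn11}. Under the hypothesis $\sigma = 0$ this collapses to $\tau(X) = -\tfrac{1}{2} J \circ (\mathcal{L}_\xi J)(X)$. The statement that $\xi$ is an infinitesimal CR automorphism is, as the lemma indicates, equivalent to $\mathcal{L}_\xi J = 0$ (since the flow of $\xi$ preserves $\theta$ by \eqref{eqn1-2} and hence $H(M) = \ker\theta$, so preserving the CR structure is the same as preserving $J$ restricted to $H(M)$). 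Thus the whole content of the lemma is the equivalence
\begin{equation*}
  J \circ (\mathcal{L}_\xi J) = 0 \iff \mathcal{L}_\xi J = 0.
\end{equation*}

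The forward implication ($\mathcal{L}_\xi J = 0 \Rightarrow \tau = 0$) is immediate from the formula above. For the reverse, I would argue as follows. Suppose $\tau = 0$, so $J \circ (\mathcal{L}_\xi J)(X) = 0$ for every $X \in TM$, which means $(\mathcal{L}_\xi J)(X) \in \ker J$. Since $J|_{H(M)} = J_b$ is a fiberwise isomorphism of $H(M)$ and $J\xi = 0$, we have $\ker J = \mathbb{R}\xi$. On the other hand, \eqref{eqn15} tells us that $\mathcal{L}_\xi$ preserves $\Gamma(H(M))$, so for any $X \in \Gamma(H(M))$ both $\mathcal{L}_\xi(JX)$ and $J(\mathcal{L}_\xi X)$ lie in $\Gamma(H(M))$, whence
\begin{equation*}
  (\mathcal{L}_\xi J)(X) = \mathcal{L}_\xi(JX) - J(\mathcal{L}_\xi X) \in \Gamma(H(M)).
\end{equation*}
Combined with $(\mathcal{L}_\xi J)(X) \in \mathbb{R}\xi$ and $H(M) \cap \mathbb{R}\xi = \{0\}$, this forces $(\mathcal{L}_\xi J)(X) = 0$ on $H(M)$.

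Finally I would check the value on $\xi$ itself: $(\mathcal{L}_\xi J)(\xi) = \mathcal{L}_\xi(J\xi) - J(\mathcal{L}_\xi \xi) = 0 - J([\xi,\xi]) = 0$ by \eqref{eqn2-2}. Combining the two cases yields $\mathcal{L}_\xi J = 0$ on all of $TM$, which completes the equivalence. There is essentially no main obstacle here — the lemma really is a direct consequence of \eqref{eqn11} and \eqref{eqn15} together with the elementary observation that $\ker J = \mathbb{R}\xi$ is transverse to the range of $\mathcal{L}_\xi$ acting on horizontal vector fields; the only thing to be careful about is treating the vertical direction $X = \xi$ separately, since $J$ is not invertible there.
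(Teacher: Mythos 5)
Your proof is correct and follows exactly the route the paper intends: the paper derives the lemma ``immediately'' from \eqref{eqn11} and \eqref{eqn15}, and your argument is precisely the fleshed-out version of that, using \eqref{eqn11} to reduce to $J\circ(\mathcal{L}_{\xi}J)=0$ and \eqref{eqn15} together with $\ker J=\mathbb{R}\xi$ to upgrade this to $\mathcal{L}_{\xi}J=0$. The separate check at $X=\xi$ is a worthwhile detail the paper leaves implicit.
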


The above lemma generalizes a known result of Webster (\cite{webster1978})
for a non-degenerate CR manifold \((M,\theta ,J,L_{\theta })\).

\begin{proposition}
  \label{prp:1}                 
  Let \((M^{2m+1},\theta ,J,h)\) be a pseudo-Hermitian manifold with \( i_{\xi }d\Omega_{h,\theta }=0\).
  Then the following conditions are equivalent :

  \begin{enumerate}[(i)]
  \item \(\tau =0\).

  \item \(\xi \) is an infinitesimal CR automorphism.

  \item \((M^{2m+1},\mathcal{F}_{\xi },h)\) is a Riemannian foliation.
  \end{enumerate}
\end{proposition}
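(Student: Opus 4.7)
The hypothesis $i_\xi d\Omega_{h,\theta}=0$ immediately activates Theorem \ref{thm:6}(i), which gives us $\sigma=0$ together with the symmetry of $\tau$ with respect to $g_{h,\theta}$. With $\sigma=0$ in hand, the equivalence (i)$\Leftrightarrow$(ii) is precisely the content of Lemma \ref{lem:7}, so nothing new is needed there.

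The heart of the proof is (i)$\Leftrightarrow$(iii). The plan is to translate the Riemannian-foliation condition into a condition on $\tau$. Recall that $\mathcal{F}_\xi$ is a Riemannian foliation precisely when the transverse metric $h$ on $H(M)\cong TM/\mathbb{R}\xi$ is holonomy invariant, which here amounts to
\begin{equation*}
  (\mathcal{L}_\xi g_{h,\theta})(X,Y)=0 \quad \text{for all } X,Y\in\Gamma(H(M)).
\end{equation*}
Since $\mathcal{L}_\xi\theta=0$ (equivalently $\mathcal{L}_\xi\Gamma(H(M))\subseteq\Gamma(H(M))$ by \eqref{eqn15}), this Lie derivative restricted to $H(M)\times H(M)$ agrees with $\mathcal{L}_\xi h$.

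Now I would compute $\mathcal{L}_\xi h$ on $H(M)\times H(M)$ using the canonical connection. From \eqref{eqn10} we have $[\xi,X]=\nabla_\xi X-\tau(X)$ for any $X\in\Gamma(H(M))$, and since $\nabla h=0$ yields $(\nabla_\xi h)(X,Y)=\xi h(X,Y)-h(\nabla_\xi X,Y)-h(X,\nabla_\xi Y)=0$, substituting gives
\begin{equation*}
  (\mathcal{L}_\xi h)(X,Y)=h(\tau(X),Y)+h(X,\tau(Y))
\end{equation*}
for $X,Y\in\Gamma(H(M))$. Thus $(M,\mathcal{F}_\xi,h)$ is a Riemannian foliation if and only if $\tau$ is \emph{anti-symmetric} with respect to $h$. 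But our hypothesis already forces $\tau$ to be \emph{symmetric} with respect to $g_{h,\theta}$ (hence with respect to $h$ on $H(M)$); a tensor that is both symmetric and anti-symmetric must vanish, giving $\tau=0$. The converse (i)$\Rightarrow$(iii) is immediate from the displayed formula.

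The only non-routine step is recognizing the correct geometric translation of "Riemannian foliation" in this setting; once one writes $(\mathcal{L}_\xi h)(X,Y)=h(\tau X,Y)+h(X,\tau Y)$ the result falls out by combining anti-symmetry with the symmetry supplied by Theorem \ref{thm:6}(i). No further curvature or structure-equation machinery is required.
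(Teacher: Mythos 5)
Your proof is correct. The first half, (i)$\Leftrightarrow$(ii), is exactly the paper's argument: Theorem \ref{thm:6}(i) gives $\sigma=0$ and the symmetry of $\tau$, and Lemma \ref{lem:7} does the rest. For the foliation statement you diverge from the paper in a small but genuine way. The paper proves (ii)$\Leftrightarrow$(iii) directly by differentiating the identity $\Omega_{h,\theta}(X,Y)=h(JX,Y)$ along $\xi$, obtaining $(\mathcal{L}_{\xi}h)(JX,Y)=(\mathcal{L}_{\xi}\Omega_{h,\theta})(X,Y)-h((\mathcal{L}_{\xi}J)X,Y)=-h((\mathcal{L}_{\xi}J)X,Y)$, so that $\mathcal{L}_{\xi}h=0$ iff $\mathcal{L}_{\xi}J=0$; the only input beyond the definitions is $\mathcal{L}_{\xi}\Omega_{h,\theta}=0$. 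You instead prove (i)$\Leftrightarrow$(iii) by computing $(\mathcal{L}_{\xi}h)(X,Y)=h(\tau(X),Y)+h(X,\tau(Y))$ from $[\xi,X]=\nabla_{\xi}X-\tau(X)$ and $\nabla h=0$, so that the Riemannian-foliation condition becomes anti-symmetry of $\tau$, which you then play off against the symmetry supplied by Theorem \ref{thm:6}(i). Both routes are equally elementary and both correctly identify the foliation condition as $\mathcal{L}_{\xi}h=0$ on horizontal vectors. What your version buys is a clean identity showing that $\tau$ (more precisely its symmetric part with respect to $h$) is exactly the obstruction to $\mathcal{F}_{\xi}$ being Riemannian, valid on any pseudo-Hermitian manifold without the hypothesis $i_{\xi}d\Omega_{h,\theta}=0$; the cost is that you must invoke the symmetry of $\tau$ a second time, whereas the paper's route to (iii) uses only $\mathcal{L}_{\xi}\Omega_{h,\theta}=0$.
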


\begin{proof}
  By \eqref{eqn6-1},
  the condition \(i_{\xi }d\Omega_{h,\theta }=0\) is equivalent to \(\mathcal{L}_{\xi }\Omega_{h,\theta }=0\).
  From Theorem \ref{thm:6} (i),
  we have \(\sigma =0\).
  Using Lemma \ref{lem:7},
  we see that \(\tau =0\) if and only if \(\mathcal{L}_{\xi }J=0\).
  This shows the equivalence of (i) and (ii).
  Next, by definition, we may derive that
  \begin{align*}
    (\mathcal{L}_{\xi }h)(JX,Y)
    & = (\mathcal{L}_{\xi }\Omega_{h,\theta})(X,,Y)-h((\mathcal{L}_{\xi }J)X,Y) \\
    & = -h((\mathcal{L}_{\xi }J)X,Y)
  \end{align*}
  for any \(X,Y\in \Gamma (TM)\).
  It follows that \(\mathcal{L}_{\xi }h=0\) if and only if \(\mathcal{L}_{\xi }J=0\).
  This implies the equivalence of (ii) and (iii).
\end{proof}

\begin{corollary}
  \label{cor:4}                 
  Let \((M^{2m+1},\theta ,J,h)\) be a pseudo-Hermitian manifold of pseudo-K\"{a}hler type.
  Then the same results of Proposition \ref{prp:1} hold.
\end{corollary}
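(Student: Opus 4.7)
The plan is essentially to observe that Corollary \ref{cor:4} is an immediate specialization of Proposition \ref{prp:1}. Recall from \eqref{eqn6-2} that ``pseudo-K\"ahler type'' is defined by the single condition $d\Omega_{h,\theta}=0$, which is equivalent to the pair of conditions $i_{\xi}d\Omega_{h,\theta}=0$ and $d_{H}\Omega_{h,\theta}=0$. In particular, the pseudo-K\"ahler hypothesis implies $i_{\xi}d\Omega_{h,\theta}=0$ trivially, since the interior product of $\xi$ with the zero form is zero.

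Concretely, I would proceed as follows. First, assume $M$ is of pseudo-K\"ahler type, so $d\Omega_{h,\theta}=0$. Then $i_{\xi}d\Omega_{h,\theta}=0$ holds automatically, so the hypothesis of Proposition \ref{prp:1} is satisfied. Applying Proposition \ref{prp:1} verbatim yields the equivalence of the three conditions (i) $\tau=0$, (ii) $\xi$ is an infinitesimal CR automorphism, and (iii) $(M,\mathcal{F}_{\xi},h)$ is a Riemannian foliation. This completes the proof in a single line.

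There is no genuine obstacle here: all of the analytic content already lies in Proposition \ref{prp:1} and the supporting Lemma \ref{lem:7}, together with the characterization of $i_{\xi}d\Omega_{h,\theta}=0$ given by Theorem \ref{thm:6}(i). The role of this corollary is simply to record the fact that the (a priori stronger) pseudo-K\"ahler hypothesis is more than enough to trigger those earlier results; indeed, pseudo-K\"ahler type additionally forces $d_{H}\Omega_{h,\theta}=0$ and hence $\Theta^{(2,0)}=0$ by Theorem \ref{thm:6}(ii), but this extra information is not needed for the present equivalence. The corollary thus serves mainly to make the link with Theorem \ref{thm:3} of the introduction explicit.
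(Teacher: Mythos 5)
Your proposal is correct and is exactly the argument the paper intends (the paper states Corollary \ref{cor:4} without proof precisely because pseudo-K\"ahler type, i.e.\ $d\Omega_{h,\theta}=0$, trivially implies the hypothesis $i_{\xi}d\Omega_{h,\theta}=0$ of Proposition \ref{prp:1}). Nothing further is needed.
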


\begin{remark}
  \label{rmk:3}                 
  Suppose \((M,\theta ,J,L_{\theta })\) is a strictly pseudoconvex CR manifold
  endowed with a positive definite pseudo-Hermitian \(1\)-form \(\theta \).
  If \(\tau =0\),
  then \(M\) is the so-called Sasakian manifold.
\end{remark}

\begin{theorem}
  \label{thm:7}                 
  Let \((M^{2m+1},\theta ,J,h)\) be a closed pseudo-Hermitian manifold.
  If \(g_{h,\theta }\) is of pseudo-K\"{a}hler type,
  then the basic cohomology class of \(\Omega_{h,\theta }^{k}\) in \(\mathcal{H}_{b}^{2k}(M)\) is nonzero for \(0\leq k\leq m\).
  In particular,
  \(\mathcal{H}_{b}^{2k}(M)\not=0\) for such \(k\).
\end{theorem}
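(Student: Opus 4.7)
The central identity to exploit is the volume formula
\(\theta\wedge\Omega_{h,\theta}^{m}=m!\,2^{m}\,dv_{g_{h,\theta}}\)
noted in Section~\ref{sec:pseudo-herm-struct}, together with Stokes' theorem on closed \(M\). Since \(d\Omega_{h,\theta}=0\) by the pseudo-K\"{a}hler hypothesis, the form \(\Omega_{h,\theta}^{k}\) is closed and basic for every \(k\), so \([\Omega_{h,\theta}^{k}]_{b}\in\mathcal{H}_{b}^{2k}(M)\) is well defined. My plan is to show it is nonzero by contradiction: assume \(\Omega_{h,\theta}^{k}=d\alpha\) for some \(\alpha\in\mathcal{A}_{b}^{2k-1}(M)\) and derive \(\int_{M}\theta\wedge\Omega_{h,\theta}^{m}=0\), contradicting the volume identity. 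The case \(k=0\) is trivial since \(\mathcal{A}_{b}^{-1}(M)=0\) and \(1\in\mathcal{A}_{b}^{0}(M)\) is a nontrivial closed basic \(0\)-form on a connected closed manifold.

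Assume \(1\leq k\leq m\) and \(\Omega_{h,\theta}^{k}=d\alpha\) with \(\alpha\) basic. Using \(d\Omega_{h,\theta}=0\) and \(\deg(\Omega_{h,\theta}^{m-k})\) even,
\begin{equation*}
  \Omega_{h,\theta}^{m}
  =\Omega_{h,\theta}^{m-k}\wedge d\alpha
  =d\bigl(\Omega_{h,\theta}^{m-k}\wedge\alpha\bigr).
\end{equation*}
Since \(\theta\) is a \(1\)-form, the Leibniz rule yields
\begin{equation*}
  \theta\wedge\Omega_{h,\theta}^{m}
  =\theta\wedge d\bigl(\Omega_{h,\theta}^{m-k}\wedge\alpha\bigr)
  =-d\bigl(\theta\wedge\Omega_{h,\theta}^{m-k}\wedge\alpha\bigr)
  +d\theta\wedge\Omega_{h,\theta}^{m-k}\wedge\alpha .
\end{equation*}

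The crucial observation is that the last term vanishes identically. Indeed, \(d\theta\), \(\Omega_{h,\theta}^{m-k}\) and \(\alpha\) are all horizontal (by \eqref{eqn1-1}, \eqref{eqn6-1} and the definition of basic forms), and their wedge is therefore a horizontal form of degree \(2+2(m-k)+(2k-1)=2m+1\). Since the horizontal distribution \(H(M)\) has rank \(2m\), every horizontal form of degree exceeding \(2m\) is zero. Hence \(\theta\wedge\Omega_{h,\theta}^{m}\) is a globally exact \((2m+1)\)-form, and Stokes' theorem on the closed manifold \(M\) gives
\begin{equation*}
  0
  =\int_{M}\theta\wedge\Omega_{h,\theta}^{m}
  =m!\,2^{m}\,\mathrm{vol}(M,g_{h,\theta})>0,
\end{equation*}
the desired contradiction. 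The last assertion \(\mathcal{H}_{b}^{2k}(M)\neq 0\) follows since this group contains the nonzero class \([\Omega_{h,\theta}^{k}]_{b}\).

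There is no serious obstacle: the only nonformal input is the rank estimate that kills basic forms of degree \(>2m\), which is immediate once one recalls that a basic form is in particular horizontal. Everything else is bookkeeping with \(d\Omega_{h,\theta}=0\), Leibniz, and Stokes.
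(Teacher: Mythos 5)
Your proof is correct and follows essentially the same route as the paper: assume \([\Omega_{h,\theta}^{k}]_{b}=0\), write \(\Omega_{h,\theta}^{m}\) as an exact form, integrate \(\theta\wedge\Omega_{h,\theta}^{m}\) against the volume identity, and kill the boundary term \(d\theta\wedge\alpha\wedge\Omega_{h,\theta}^{m-k}\) because it is a horizontal form of degree \(2m+1>\operatorname{rank}H(M)\). The only cosmetic difference is that you carry out the integration by parts via the Leibniz rule at the level of forms before applying Stokes, whereas the paper applies Stokes directly to \(\int_{M}\theta\wedge d(\Phi\wedge\Omega_{h,\theta}^{m-k})\); the substance is identical.
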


\begin{proof}
  Since \(\mathcal{H}_{b}^{0}(M)= \mathbb{R} \) (by definition),
  we only need to consider \(1\leq k\leq m\).
  Suppose \( [\Omega_{h,\theta }^{k}]_{b}=0\),
  that is,
  there exists a basic smooth \( (2k-1)\)-form \(\Phi \in \mathcal{A}_{b}^{2k-1}(M)\)
  such that \(\Omega_{h,\theta }^{k}=d\Phi \).
  From the assumption \(d\Omega_{h,\theta }=0\),
  we deduce that
  \begin{equation}
    \Omega_{h,\theta }^{m}
    =d\Phi \wedge \Omega_{h,\theta }^{m-k}
    =d\left( \Phi \wedge \Omega_{h,\theta }^{m-k}\right) .  \label{4.5}
  \end{equation}
  Using \(\theta \wedge \Omega_{h,\theta }^{m}=m!2^{m}dv_{g_{h,\theta }}\) and \eqref{4.5},
  we obtain
  \begin{align}
    m!2^{m}vol(M)
    & = \int_{M}\theta \wedge \Omega_{h,\theta }^{m}  \nonumber \\
    & = \int_{M}\theta \wedge d\left( \Phi \wedge \Omega_{h,\theta }^{m-k}\right) \nonumber \\
    & = \int_{M}\theta \wedge d(\Phi \wedge \Omega_{h,\theta }^{m-k})  \nonumber \\
    & = -\int_{M}d\theta \wedge \Phi \wedge \Omega_{h,\theta }^{m-k}. \label{4.6}
  \end{align}
  Since \(d\theta \wedge \Phi \wedge \Omega_{h,\theta }^{m-k}\in \mathcal{A}_{b}^{2m+1}(M)=\{0\}\),
  \eqref{4.6} is a contradiction.
  This proves the theorem.
\end{proof}

Suppose that \((M^{2m+1},\theta ,J)\) is a non-degenerate CR manifold with the property
that \(\xi \) is an infinitesimal CR automorphism.
Since \( J_{b}\) satisfies the integrability condition \eqref{eqn0-1}
and it is also constant along the Reeb leaves,
we see that \((M,H^{1,0}(M);\mathcal{F})\) is a transversely holomorphic structure
in the sense of \cite{goez-mont1980holo}.
More precisely,
there is an open covering \(\{U_{a}\}_{a\in \Lambda }\) of \(M\) and local (foliated) charts
\(\varphi_{a}:U_{a}\rightarrow V_{a}\subset \mathbb{R} \times \mathbb{C}^{m}\)
such that the leaves of \(F|_{U_{a}}\) are given in local coordinates
\( (x^{(a)},z_{1}^{(a)}, \cdots, z_{m}^{(a)})\in V_{a}\)
by \(z_{\alpha }^{(a)}=const.\) (\(\alpha =1, \cdots, m\,\)).
Furthermore,
\(
(\varphi_{a}\circ \varphi_{b}^{-1})(x,z_{\alpha}^{(b)})
=(f_{ab}(x^{(b)},z_{1}^{(b)},z_{1}^{(b)}...,z_{m}^{(b)})),
g_{ab}(z_{1}^{(b)}...,z_{m}^{(b)}))
\),
where \(g_{ab}\) is a holomorphic function for any \(a,b\in \Lambda \).
Let \( \pi_{\varphi_{a}}:U_{a}\rightarrow \mathbb{C}^{m}\)
be the local submersion defined to be the composition of \(\varphi_{a}\) with the projection
\(\pi_2:\) \( \mathbb{R} \times \mathbb{C}^m\rightarrow \mathbb{C}^m\) on the second factor.
Clearly \(J_{0}=(\pi_{a})_{\ast }J_{b}\) on each \( U_{a}\),
where \(J_{0}\) is the natural complex structure of \( \mathbb{C}^{m}\).
Let \(H^{\ast }(M)\) denote the dual vector bundle of \(H(M)\).
According to the decomposition \(H(M)_{\mathbb{C}}=H^{1,0}(M)\oplus H^{0,1}(M)\),
we have a corresponding decomposition of
\( H^{\ast }(M)_{\mathbb{C}}=H_{1,0}^{\ast }(M)\oplus H_{0,1}^{\ast }(M)\),
and this induces a splitting of the exterior differential algebra over
\(H^{\ast }(M)_{\mathbb{C}}\):
\begin{align*}
  \Lambda^{r}(H^{\ast }(M)_{\mathbb{C}})
  =\bigoplus_{p+q=r}\Lambda^{p,q}(H^{\ast }(M)_{\mathbb{C}}).
\end{align*}
A basic form \(\omega \in A_{b}^{r}(M)\otimes \mathbb{C} \)
can be regarded in a natural way as a global section of
\(\Lambda^{r}(H^{\ast }(M)_{\mathbb{C}})\).
Consequently,
if the basic form \(\omega \in \Gamma (\Lambda^{p,q}(H^{\ast }(M)_{\mathbb{C}})\),
then \(\omega \) is said to be of type \((p,q)\).
Let \(\mathcal{A}_{b}^{p,q}(M)\) denote the space of basic forms of type \((p,q)\).
Set \( d_{b}=d\mid_{A_{b}^{r}(M)\otimes \mathbb{C}}\).
Since \(d_{b}:A_{b}^{r}(M)\otimes \mathbb{C} \rightarrow A_{b}^{r+1}(M)\otimes \mathbb{C} \),
there is a decomposition
\begin{equation*}
  d_{b}=\partial_{b}\oplus \overline{\partial }_{b}
\end{equation*}
with
\begin{equation*}
  \partial_{b}:\mathcal{A}_{b}^{p,q}(M)\rightarrow \mathcal{A}_{b}^{p+1,q}(M) , \quad
  \overline{\partial }_{b}:\mathcal{A}_{b}^{p,q}(M)\rightarrow \mathcal{A}_{b}^{p,q+1}(M)
\end{equation*}
and
\begin{equation*}
  \partial_{b}^{2}=0, \quad
  \overline{\partial }_{b}^{2}=0
  \quad \mbox{and} \quad
  \partial_{b}\overline{\partial }_{b}+\overline{\partial }_{b}\partial =0.
\end{equation*}
In particular, we have the basic Dolbeault complex
\begin{equation*}
  0
  \rightarrow \mathcal{A}_b^{p,0}(M)
  \overset{\overline{\partial}_b}{\rightarrow}
  \mathcal{A}_b^{p,1}(M)
  \rightarrow \cdots
  \overset{\overline{\partial}_b}{\rightarrow}
  \mathcal{A}_b^{p,n}(M)
  \rightarrow 0
\end{equation*}
and the basic Dolbeault cohomology groups \(\mathcal{H}_{b}^{p,q}(\mathcal{F}) \).

\begin{lemma}
  \label{lem:8}                 
  Let \((M^{2m+1},\theta ,J,h)\) be a pseudo-K\"{a}hler manifold with \( \tau =0\).
  Let \(\omega \in \mathcal{A}_{b}^{1,1}(M)\) be a real basic \(2\)-form of type \((1,1)\).
  Then \(\omega \) is closed if and only if every point \(x\in M\)
  has an open neighborhood \(U\) such that the restriction of \(\omega \) to \(U\)
  equals \(i\partial_{b}\overline{\partial }_{b}u\)
  for some real basic function \(u\) on \(U\).
  In particular,
  \(\Omega_{h,\theta }=\sqrt{-1} \partial_{b}\overline{\partial }_{b}f\)
  for some basic function \(f\) around any point.
  Such a function \(f\) will be called the potential function of \( g_{h,\theta }\).
\end{lemma}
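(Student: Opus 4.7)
The strategy is to imitate the classical local \(\partial \bar\partial\)-lemma on a K\"ahler manifold, transferring it to the basic setting by means of the transversely holomorphic foliation structure. The hypothesis \(\tau = 0\) together with Proposition \ref{prp:1} guarantees that \(\xi\) is an infinitesimal CR automorphism, so by the discussion preceding the lemma the pair \((M, H^{1,0}(M); \mathcal{F}_\xi)\) is a transversely holomorphic foliation. Consequently, around any \(x \in M\) there is a foliated chart \(\varphi_a : U_a \to V_a \subset \mathbb{R} \times \mathbb{C}^m\) with transverse factor \(W_a \subset \mathbb{C}^m\); basic \(r\)-forms on \(U_a\) correspond bijectively under \(\pi_2 \circ \varphi_a\) to smooth \(r\)-forms on \(W_a\), and in this correspondence \(d_b\), \(\partial_b\), \(\bar\partial_b\) become the ordinary \(d\), \(\partial\), \(\bar\partial\) on \(W_a\). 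The lemma thus reduces to its classical local counterpart on a polydisc in \(\mathbb{C}^m\).

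My plan for the reduced statement is as follows. Let \(\tilde\omega\) be the real closed \((1,1)\)-form on \(W_a\) corresponding to \(\omega\), and shrink \(W_a\) to a polydisc. First, I invoke the ordinary Poincar\'e lemma to write \(\tilde\omega = d\tilde\eta\) for a real smooth \(1\)-form \(\tilde\eta\), and decompose \(\tilde\eta = \tilde\eta^{1,0} + \overline{\tilde\eta^{1,0}}\). Second, comparing types in \(d\tilde\eta = \tilde\omega\) (which has only a \((1,1)\)-component) forces \(\partial \tilde\eta^{1,0} = 0\) and \(\bar\partial \overline{\tilde\eta^{1,0}} = 0\). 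Third, the Grothendieck–Dolbeault lemma on the polydisc produces a smooth complex function \(\tilde g\) with \(\overline{\tilde\eta^{1,0}} = \bar\partial \tilde g\), whence \(\tilde\eta^{1,0} = \partial \overline{\tilde g}\) by conjugation. Fourth, using \(\partial^2 = \bar\partial^2 = 0\) and \(\partial\bar\partial = -\bar\partial\partial\), I compute
\begin{equation*}
  \tilde\omega
  = d\bigl(\partial \overline{\tilde g} + \bar\partial \tilde g\bigr)
  = \partial \bar\partial \tilde g - \partial \bar\partial \overline{\tilde g}
  = \partial \bar\partial (\tilde g - \overline{\tilde g})
  = \sqrt{-1}\, \partial \bar\partial \tilde u,
\end{equation*}
where \(\tilde u := -\sqrt{-1}(\tilde g - \overline{\tilde g})\) is real. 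Pulling back by the submersion \(\pi_2 \circ \varphi_a\) yields a real basic function \(u\) on \(U_a\) with \(\omega = \sqrt{-1}\, \partial_b \bar\partial_b u\). The converse implication is immediate: \(\sqrt{-1}\, \partial_b \bar\partial_b u\) is automatically a real basic \((1,1)\)-form, and its \(d_b\)-closedness follows from \(\partial_b^2 = \bar\partial_b^2 = 0\) and \(\partial_b\bar\partial_b = -\bar\partial_b\partial_b\).

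The last assertion is the special case \(\omega = \Omega_{h,\theta}\): by \eqref{4.4} this form is real and of type \((1,1)\), and the pseudo-K\"ahler hypothesis gives \(d\Omega_{h,\theta} = 0\), so the first part supplies the potential \(f\).

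I expect the only genuinely delicate point to be the verification that the identification of basic objects with transverse objects on a foliated chart intertwines \(\partial_b,\bar\partial_b\) with the ordinary \(\partial,\bar\partial\) on \(\mathbb{C}^m\); once this compatibility is recorded, the argument is a paragraph of classical complex analysis on a polydisc. Everything else (the Poincar\'e and Dolbeault lemmas, the type-decomposition manipulations) is standard and can be quoted without recomputation.
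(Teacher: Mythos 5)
Your proposal is correct and follows essentially the same route as the paper: reduce to the transverse polydisc via the transversely holomorphic foliation structure guaranteed by \(\tau=0\) and Proposition \ref{prp:1}, descend the basic form through the local submersion, and apply the local \(\partial\overline{\partial}\)-lemma there before pulling back. The only difference is that you spell out the proof of the local \(\partial\overline{\partial}\)-lemma (Poincar\'e plus Dolbeault--Grothendieck) where the paper simply cites it, and your computation of that standard step is correct.
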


\begin{proof}
  By Proposition \ref{prp:1},
  we know that \(\xi \) is an infinitesimal CR automorphism,
  that is,
  \((M,H^{1,0}(M);\mathcal{F})\) is a transversely holomorphic structure.
  Consequently any point \(x\in M\) has a foliated chart
  \(\varphi :U\rightarrow V\subset \mathbb{R} \times \mathbb{C}^{m}\)
  with the corresponding submersion
  \(\pi_{\varphi }:U\rightarrow \pi_{2}(V)\subset \mathbb{C}^{m}\).
  Without loss of generality,
  one may assume that \(\pi_{2}(V)\) is a polycylinder in \( \mathbb{C}^{m}\).
  Since \(\omega \) is basic,
  there exists a \(2\)-form \(\widetilde{\omega } \) of type \((1,1)\) on \(\pi_{2}(V)\)
  such that \(\omega =\pi_{\varphi }^{\ast}(\widetilde{\omega })\).
  Notice that \(\pi_{\varphi }^{\ast }(d\widetilde{\omega })=d\omega =0\),
  which implies that \(d\widetilde{\omega }=0\,\),
  since \( \pi_{\varphi }\) is a submersion.
  Using the \(\partial \overline{\partial }\) -lemma on \(\pi_{2}(V)\),
  we have a real function \(\widetilde{u}\) on \(\pi_{2}(V)\)
  such that \(\widetilde{\omega }=\sqrt{-1}\partial \overline{\partial}\widetilde{u}\).
  It follows immediately that \(\omega =\sqrt{-1}\partial_{b} \overline{\partial }_{b}u\),
  where \(u=\pi_{\varphi }^{\ast }(\widetilde{u})\) .
  This complete the proof of Lemma \ref{lem:8}.
\end{proof}

\begin{theorem}
  \label{thm:8}                 
  Let \((M^{2m+1},\theta ,J,h)\) be a compact pseudo-K\"{a}hler manifold with \(\tau =0\),
  and let \(\Omega \) and \(\Omega^{\prime }\) be basic real closed \((1,1)\)-forms
  such that \([\Omega ]_{b}=[\Omega^{\prime }]_{b}\).
  Then there exists a smooth basic function \(\phi \) such that
  \(\Omega^{\prime}=\Omega +\sqrt{-1}\partial_{b}\overline{\partial }_{b}\phi \).
\end{theorem}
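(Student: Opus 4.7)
The plan is to mimic the classical $\partial\bar\partial$-lemma, replacing Dolbeault theory by its basic counterpart for the transversely Kähler foliation $\mathcal{F}_\xi$. First I would exploit that $\tau=0$ combined with the pseudo-Kähler hypothesis lets me apply Corollary \ref{cor:4}: $\xi$ is an infinitesimal CR automorphism and $(M,\mathcal{F}_\xi,h)$ is a Riemannian foliation. Consequently $\mathcal{F}_\xi$ is transversely Kähler, with local holomorphic submersions $\pi_\varphi:U\rightarrow \mathbb{C}^m$ as in the discussion preceding Lemma \ref{lem:8}, so that the basic Dolbeault complex $(\mathcal{A}_b^{p,q}(M),\bar\partial_b)$ inherits the full formalism of Kähler geometry at the transverse level.

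The hypothesis $[\Omega]_b=[\Omega']_b$ produces a basic real $1$-form $\eta$ with $\Omega'-\Omega=d\eta$. Decomposing $\eta=\eta^{1,0}+\eta^{0,1}$ with $\eta^{0,1}=\overline{\eta^{1,0}}$ and expanding $d=\partial_b+\bar\partial_b$, the fact that $\Omega'-\Omega$ has pure type $(1,1)$ forces
\begin{equation*}
\partial_b\eta^{1,0}=0,\qquad \bar\partial_b\eta^{0,1}=0,
\end{equation*}
while the $(1,1)$-component reads $\Omega'-\Omega=\bar\partial_b\eta^{1,0}+\partial_b\eta^{0,1}$.

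Next I would appeal to the basic Hodge theorem for the $\bar\partial_b$-complex on a compact manifold carrying a transversely Hermitian Riemannian foliation (as developed by El Kacimi-Alaoui and collaborators). Since $\eta^{0,1}$ is $\bar\partial_b$-closed, its Hodge decomposition collapses to $\eta^{0,1}=h+\bar\partial_b\psi$ with $h$ a $\bar\partial_b$-harmonic basic $(0,1)$-form and $\psi$ a basic function. The transverse Kähler identities, which hold pointwise through each foliated chart $\pi_\varphi$, yield $\Delta_{\partial_b}=\Delta_{\bar\partial_b}$ on basic forms, so $h$ is also $\partial_b$-harmonic and in particular $\partial_b h=0$. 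Taking conjugates gives $\eta^{1,0}=\bar h+\partial_b\bar\psi$ with $\bar\partial_b\bar h=0$. Substituting and using $\bar\partial_b\partial_b=-\partial_b\bar\partial_b$, a short computation delivers
\begin{equation*}
\Omega'-\Omega=\partial_b\bar\partial_b(\psi-\bar\psi)=\sqrt{-1}\,\partial_b\bar\partial_b\phi,
\end{equation*}
with $\phi:=-\sqrt{-1}(\psi-\bar\psi)=2\,\mathrm{Im}(\psi)$ a smooth real basic function, which is exactly what is claimed.

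The main obstacle is justifying the basic Hodge decomposition and the transverse Kähler identities in the present setup. Once one establishes (or cites) that the transversely holomorphic, transversely Hermitian, Riemannian foliation $\mathcal{F}_\xi$ carries a genuine transverse Kähler structure, the remainder is a mechanical transcription of the usual Kähler argument. I would therefore devote the bulk of the work to setting up this basic Hodge framework and verifying that the formal adjoint $\bar\partial_b^*$ preserves basicness so that harmonic representatives are themselves basic, after which the chain of equalities above closes the proof.
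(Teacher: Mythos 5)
Your proposal is correct and follows essentially the same route as the paper: both reduce the statement to the fact that, under the hypotheses, $\mathcal{F}_{\xi}$ is a transversely K\"ahler Riemannian foliation (via Proposition \ref{prp:1} and the local potentials from Lemma \ref{lem:8}), and then invoke El Kacimi-Alaoui's transverse Hodge theory on the compact manifold $M$. The only difference is one of packaging: the paper cites the global basic $\partial_{b}\overline{\partial}_{b}$-lemma directly (Proposition 3.5.1 of \cite{el1990operat}), whereas you unpack its standard proof — basic Hodge decomposition of $\eta^{0,1}$ plus the transverse K\"ahler identities — which is a faithful transcription of the same underlying argument.
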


\begin{proof}
  The proof of Lemma \ref{lem:8} tells us that there is a closed \(2\)-form \(\widetilde{\Omega }\)
  on \(\pi_{2}(V)\) for each foliated chart \(\varphi :U\rightarrow V\) such that
  \(\Omega_{h,\theta }=\pi_{\varphi }^{\ast }(\widetilde{\Omega })\) .
  Combining this with Proposition \ref{prp:1},
  we find that the Reeb foliation of \((M^{2m+1},\theta ,J,h)\)
  is a K\"{a}hler foliation.
  Hence this theorem follows immediately from Proposition 3.5.1 of \cite{el1990operat}.
\end{proof}

In view of Remark \ref{rmk:3},
if \((M^{2m+1},\theta ,J,h)\) is a pseudo-K\"{a}hler manifold with \(\tau =0\),
then \(M\) will be referred to a pseudo-K\"{a}hler manifold of Sasakian type.
Before ending this section,
we would like to further investigate some properties of its pseudo-Hermitian sectional curvatures.

Let \((M,\theta ,J,h)\) be a pseudo-K\"{a}hler manifold with \(\tau =0\) and
let \(\nabla \) be its canonical pseudo-Hermitian connection.
For simplicity,
we still denote by \(\langle \cdot ,\cdot \rangle \) the inner products and its complex linear extension,
induced from \(g_{h,\theta }\),
on various tensor bundles of \(M\).
Recall that the curvature tensor \(R(X,Y,Z,W)\) of \(\nabla \)
introduced in \S \ref{sec:struct-equat-bianchi} is defined as follows:
\begin{equation*}
  R(X,Y,Z,W)=\langle R(Z,W)Y,X\rangle
\end{equation*}
for \(X,Y,Z,W\in T(M)\otimes \mathbb{C} \).
By \eqref{3.9.3-4},
we get
\begin{equation}
  R(X,Y,Z,W)=-R(Y,X,Z,W)=-R(X,Y,W,Z).  \label{4.7}
\end{equation}
From Theorem \ref{thm:6} and Lemma \ref{lem:5},
it follows that if any of the vectors \(X,Y,Z\) and \(W\) is vertical, then
\begin{equation}
  R(X,Y,Z,W)=0.  \label{4.8}
\end{equation}
By restricting \(R\) to \(H(M)\),
we obtain a quadrilinear mapping \(R^{H}\)on \( H(M)\) as follows:
\begin{equation}
  R^{H}(X,Y,Z,W)=R(X,Y,Z,W)  \label{4.9}
\end{equation}
for any \(X,Y,Z,W\in H(M)\).
The tensor \(R^{H}\) will be referred to as the \emph{horizontal curvature tensor},
which also fulfills \eqref{4.7} for vectors in \(H(M)\).
Applying Lemma \ref{lem:5} again,
we find that \(R^{H}\) shares the same properties as the curvature tensor of a K\"{a}hler manifold.
It is well-known that the holomorphic sectional curvatures of a K\"{a}hler manifold
determine the curvature tensor completely.
This result depends only on the algebraic properties of the curvature tensor of the K\"{a}hler manifold
(cf. \S 7 in Chapter IX of \cite{kobayashi1996diff}).
Consequently the pseudo-Hermitian sectional curvatures of \(M\) determine \(R^{H}\) too.
Combining \eqref{4.8} and the previous discussion,
we conclude the following

\begin{theorem}
  \label{thm:9}                 
  Let \((M,\theta ,J,h)\) be a pseudo-K\"{a}hler manifold with \(\tau =0\).
  Then the pseudo-Hermitian sectional curvatures of \(M\) determine the curvature tensor completely.
\end{theorem}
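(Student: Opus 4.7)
The plan is to reduce the statement to the corresponding algebraic fact for Kähler curvature tensors, which the excerpt already foreshadows. The first step is to note that \eqref{4.8} gives the vanishing of $R(X,Y,Z,W)$ whenever any slot is filled by the Reeb vector $\xi$. Hence $R$ is entirely determined by its restriction $R^H$ to $H(M) \otimes H(M) \otimes H(M) \otimes H(M)$, and it suffices to show that $R^H$ is determined by the pseudo-Hermitian sectional curvatures $K_\theta(P)$ indexed by $J$-invariant horizontal $2$-planes.

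Next I would verify that $R^H$ obeys the same algebraic identities as the curvature of a Kähler manifold at a point. Theorem \ref{thm:6}(iii) turns the pseudo-Kähler hypothesis into $\Theta^{(2,0)} = 0$ and $\sigma = 0$, and $\tau = 0$ kills the remaining torsion $A^{\alpha}_{\beta}, A^{\alpha}_{\overline{\beta}}$ together with all their covariant derivatives appearing in Lemma \ref{lem:5}. Feeding these into the \emph{In particular} part of Lemma \ref{lem:5} yields
\begin{equation*}
  R^{\alpha}_{\beta \lambda \mu} = 0, \qquad
  R^{\alpha}_{\beta \overline{\lambda}\overline{\mu}} = 0, \qquad
  R^{\alpha}_{\beta \lambda \overline{\mu}} = R^{\alpha}_{\lambda \beta \overline{\mu}},
\end{equation*}
which, together with the symmetries \eqref{3.9.3-4} coming from $\nabla g_{h,\theta} = 0$ and $\nabla J = 0$, are exactly the Kähler curvature identities for $R^H$ at every point $x \in M$.

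The last step is to invoke the purely algebraic reconstruction lemma from Kähler geometry (Chapter IX, §7 of Kobayashi-Nomizu): any $(0,4)$-tensor on a Hermitian vector space $(V, J, g)$ satisfying the Kähler curvature symmetries is uniquely determined by the associated holomorphic sectional curvatures $X \mapsto R(X, JX, JX, X)$. Applying this fibrewise to $R^H_x$ at each $x \in M$, and observing that the values on $J$-invariant horizontal $2$-planes are by definition the pseudo-Hermitian sectional curvatures, we recover $R^H$ from $K_\theta$. Combined with the vertical vanishing from the first step, this recovers $R$.

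The only real obstacle is organizational rather than computational: one must confirm that \emph{every} symmetry used by the Kähler reconstruction argument in Kobayashi-Nomizu is genuinely satisfied by $R^H$ in our setting. This is precisely what Lemma \ref{lem:5} (in the $\sigma = 0$, $\Theta^{(2,0)} = 0$, $\tau = 0$ specialization) delivers, so no extra identity needs to be proved and the theorem follows by a direct citation.
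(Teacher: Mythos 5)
Your proposal is correct and follows essentially the same route as the paper: both reduce to the vertical vanishing of $R$ (via Theorem \ref{thm:6} and Lemma \ref{lem:5}), observe that $R^{H}$ satisfies the algebraic identities of a K\"ahler curvature tensor once $\tau=0$ kills the torsion components $A^{\alpha}_{\beta}$, $A^{\alpha}_{\overline{\beta}}$ in Lemma \ref{lem:5}, and then cite the purely algebraic reconstruction result in Chapter IX, \S 7 of Kobayashi--Nomizu. Your write-up is in fact slightly more explicit than the paper's, since you spell out exactly which curvature components vanish and which symmetry survives.
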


\begin{corollary}
  \label{cor:5}                 
  The pseudo-Hermitian sectional curvatures of a Sasakian manifold
  \( (M,\theta ,J,L_{\theta })\) determine the curvature tensor completely.
\end{corollary}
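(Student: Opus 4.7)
The plan is to observe that Corollary \ref{cor:5} follows almost immediately from Theorem \ref{thm:9} once we verify that a Sasakian manifold fits into the hypothesis of that theorem. By definition, a Sasakian manifold $(M,\theta,J,L_\theta)$ is a strictly pseudoconvex CR manifold with positive definite Levi form $L_\theta$ and vanishing pseudo-Hermitian torsion $\tau = 0$, with the Hermitian metric on $H(M)$ taken to be $h = L_\theta$.

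First I would recall from Example \ref{ex:1} that when $h = L_\theta$, the pseudo-K\"ahler form satisfies $\Omega_{L_\theta,\theta} = d\theta$. Since $d(d\theta) = 0$ automatically, we obtain $d\Omega_{L_\theta,\theta} = 0$, so $(M,\theta,J,L_\theta)$ is a pseudo-K\"ahler manifold in the sense of this paper. Moreover, since $L_\theta$ is positive definite, $g_{L_\theta,\theta}$ is a Riemannian metric, and the canonical pseudo-Hermitian connection of $(M,\theta,J,L_\theta)$ coincides with the Tanaka--Webster connection, as noted in the discussion following Remark \ref{rmk:1}. Thus the pseudo-Hermitian sectional curvatures in our sense agree with the standard notion on a Sasakian manifold.

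Having verified these two facts, I would simply invoke Theorem \ref{thm:9} with $h = L_\theta$: the hypothesis that $M$ is a pseudo-K\"ahler manifold with $\tau = 0$ is exactly the Sasakian condition together with Example \ref{ex:1}, so the conclusion, that the pseudo-Hermitian sectional curvatures determine the curvature tensor completely, applies verbatim.

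Since this is a direct specialization, there is no real obstacle; the only thing to check carefully is that the curvature tensor referred to in Corollary \ref{cor:5} is the one associated to the canonical connection (which here is the Tanaka--Webster connection), so that Theorem \ref{thm:9} is being applied to the same tensor that governs Sasakian geometry. This follows from the identification of the two connections under $h = L_\theta$, so the proof reduces to a single line noting that Sasakian $\Rightarrow$ pseudo-K\"ahler with $\tau = 0$, and then citing Theorem \ref{thm:9}.
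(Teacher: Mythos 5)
Your proposal is correct and matches the paper's (implicit) argument exactly: the corollary is a direct specialization of Theorem \ref{thm:9} to $h=L_{\theta}$, using $\Omega_{L_{\theta},\theta}=d\theta$ from Example \ref{ex:1} to see that a Sasakian manifold is pseudo-K\"ahler with $\tau=0$, and the identification of the canonical connection with the Tanaka--Webster connection. No gaps.
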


\begin{remark}
  \label{rmk:4}                 
  The sectional curvatures of \(J\)-invariant 2-planes in \(H(M)\)
  can also be introduced for the Riemannian connection \(\nabla^{\theta}\) of \(g_{\theta }\) on a Sasakian manifold.
  This type of sectional curvatures is called \(\phi \)-sectional curvature in the literature (\(\phi =J\)).
  It is known that the curvature tensor of \(\nabla^{\theta }\) for a Sasakian manifold
  is determined completely by the \(\phi \)-sectional curvatures
  (cf. Chapter 7 in \cite{blair2002riem}).
  This result is equivalent to Corollary \ref{cor:5} above,
  because there is a relation between the curvature tensors of \(\nabla \) and \(\nabla^{\theta }\)
  (please refer to \cite{dragomir2007diff} for the curvature relation between these two connections).
\end{remark}

In terms of the inner product \(g_{h,\theta }\) at each point in \(M\),
we set
\begin{align}
  & R_{0}(X,Y,Z,W)  \nonumber \\
  & = \frac{c}{4}\{\langle \pi_{H}X,\pi_{H}Z\rangle \langle \pi_{H}Y,\pi_{H}W\rangle
    -\langle \pi_{H}X,\pi_{H}W\rangle \langle \pi_{H}Y,\pi_{H}Z\rangle  \nonumber \\
  & \quad
    + \langle \pi_{H}X,J\pi_{H}Z\rangle \langle \pi_{H}Y,J\pi_{H}W\rangle
    -\langle \pi_{H}X,J\pi_{H}W\rangle \langle \pi_{H}Y,J\pi_{H}Z\rangle \nonumber \\
  & \quad
    + 2\langle \pi_{H}X,J\pi_{H}Y\rangle \langle \pi_{H}Z,J\pi_{H}W\rangle \}  \label{4.11}
\end{align}
for \(X,Y,Z,W\in T(M)\otimes \mathbb{C} \).
Clearly, if any of the vectors \(X,Y,Z\) and \(W\) is vertical, then
\begin{equation}
  R_{0}(X,Y,Z,W)=0.  \label{4.12}
\end{equation}
Similarly,
we obtain a quadrilinear mapping \(R_{0}^{H}\) on \(H(M)\) by restricting \(R_{0}\) to \(H(M)\).
It is easy to see from \eqref{4.11} that \( R_{0}^{H}\) shares the same properties as the curvature tensor of a K\"{a}hler manifold and
\begin{equation}
  R_{0}^{H}(e,Je,e,Je)=c  \label{4.13}
\end{equation}
for any unit vector \(e\in H(M)\).
From \eqref{4.8}, \eqref{4.11}, \eqref{4.12}, \eqref{4.13} and Theorem \ref{thm:9}
(see also Proposition 7.3 in \cite{kobayashi1996diff}), we have

\begin{proposition}
  \label{prp:2}                 
  Let \((M,\theta ,J,h)\) be a pseudo-K\"{a}hler manifold with \(\tau =0\) .
  If \(M\) has constant pseudo-Hermitian sectional curvature \(c\),
  then \( R=R_{0} \).
\end{proposition}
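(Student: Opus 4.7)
The plan is to reduce the assertion to the algebraic uniqueness of Kähler-type curvature tensors, which is the same algebraic fact that powers Theorem~\ref{thm:9}. I would proceed in three short steps.

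First, I reduce to horizontal arguments. By \eqref{4.8}, $R(X,Y,Z,W)=0$ whenever any of the four arguments is vertical, and by \eqref{4.12} the same holds for $R_{0}$. Using the decomposition $T(M)=H(M)\oplus \mathbb{R}\xi$ together with the quadrilinearity of both tensors, it suffices to prove $R^{H}=R_{0}^{H}$ on $H(M)$.

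Second, I verify that both $R^{H}$ and $R_{0}^{H}$ are Kähler-type quadrilinear forms on the Hermitian bundle $(H(M),J_{b},h)$ in the sense of Chapter~IX of \cite{kobayashi1996diff}, i.e.\ that they enjoy the pair symmetry, the first Bianchi identity, and $J$-invariance in each pair. For $R^{H}$ this was the content of the paragraph preceding Theorem~\ref{thm:9}: it follows from Lemma~\ref{lem:5} once one invokes $\Theta^{(2,0)}=0$ and $\sigma=0$, both of which are guaranteed by Theorem~\ref{thm:6}(iii) combined with the hypothesis $\tau=0$. For $R_{0}^{H}$ the required symmetries are algebraic identities that can be read off directly from the defining formula \eqref{4.11}, using only the symmetry of $\langle\cdot,\cdot\rangle$ and the horizontal identity $\langle JA,B\rangle=-\langle A,JB\rangle$.

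Third, I compare pseudo-Hermitian sectional curvatures. For every horizontal unit vector $e$, the hypothesis that $K_{\theta}\equiv c$ gives $R^{H}(e,Je,e,Je)=c$, while \eqref{4.13} gives $R_{0}^{H}(e,Je,e,Je)=c$. Therefore $R^{H}-R_{0}^{H}$ is a Kähler-type tensor on $H(M)$ whose pseudo-Hermitian (i.e.\ holomorphic) sectional curvature vanishes identically. The purely algebraic uniqueness theorem for Kähler curvature tensors (Proposition~7.1 of Chapter~IX of \cite{kobayashi1996diff}, already invoked in the proof of Theorem~\ref{thm:9}) then forces $R^{H}-R_{0}^{H}=0$ at every point, and combining this with the first step yields $R=R_{0}$.

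The only mildly delicate step is checking the Kähler symmetries of $R_{0}^{H}$ from \eqref{4.11}; this is routine bookkeeping rather than a genuine obstacle, and everything else is a direct application of results already established in the paper.
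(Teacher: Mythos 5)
Your proposal is correct and follows essentially the same route as the paper: the paper's own (very terse) proof simply cites \eqref{4.8}, \eqref{4.11}, \eqref{4.12}, \eqref{4.13} and Theorem~\ref{thm:9}, which is exactly the reduction to the horizontal part, the K\"ahler-type symmetries of \(R^{H}\) and \(R_{0}^{H}\), and the algebraic uniqueness of K\"ahler curvature tensors with prescribed holomorphic sectional curvature that you spell out. Your version merely makes explicit the bookkeeping (in particular the verification of the symmetries of \(R_{0}^{H}\) from \eqref{4.11}) that the paper leaves to the reader.
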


By \eqref{4.11} and Proposition \ref{prp:2},
we see that if \(M\) is a pseudo-K\"{a}hler manifold with \(\tau =0\)
and constant pseudo-Hermitian sectional curvature \(c\), then
\begin{equation}
  K^{H}(P)=\frac{c}{4}\left( 1+3\langle \eta_{1},J\eta_{2}\rangle^{2}\right)
  \label{4.14}
\end{equation}
for any \(P=span\{\eta_{1},\eta_{2}\}\subset P\),
where \(\{\eta_{1},\eta_{2}\}\) is an orthonormal basis of \(P\).
Then \eqref{4.14} gives
\begin{equation*}
  Ric_{b}=\lambda h
\end{equation*}
with \(\lambda =\frac{1}{2}(m+1)c\), that is,
\(M\) is a pseudo-Einstein manifold.
We will investigate pseudo-Einstein manifolds defined in Remark \ref{rmk:2}
from a variational point of view in a forthcoming paper.

\section{The Cartan-type theorems} \label{sec:cartan-type-theorems}

It is known from Cartan's theorem (\cite{cartan1951lecon}) that the metric of a Riemannian manifold is,
in some sense,
determined locally by its curvature.
In this section,
we try to generalize this result to pseudo-Hermitian manifolds.

A smooth map
\(f: (M,\theta ,J,h)\rightarrow (N,\widetilde{\theta },\widetilde{J},\widetilde{h})\)
between two pseudo-Hermitian manifolds of the same dimension
is called a local \emph{pseudo-Hermitian isometry}
if \(df_{p}:T_{p}(M)\rightarrow T_{f(p)}(N)\) is a\ pseudo-Hermitian linear isometry at every point \(p\in M\),
which means
\begin{align}
  g_{\widetilde{h},\widetilde{\theta }}(df_{p}(X),df_{p}(Y))
  & = g_{h,\theta}(X,Y), \quad X,Y\in T_{p}M,  \nonumber \\
  df_{p}\circ J_{p}
  & = \widetilde{J}_{f(p)}\circ df_{p}.  \label{5.4-0}
\end{align}
In particular,
this means that \(f\) is a special local Riemannian isometry.
Since \(J\xi =0\),
we see from \eqref{5.4-0} that \(\widetilde{J}(df(\xi ))=0\),
and therefore \(df(\xi )=\pm \xi \).
Furthermore, 
if \(f\) is a diffeomorphism, 
then we say that \(f\) is a \emph{pseudo-Hermitian isometry}.

To establish a Cartan-type theorem for pseudo-Hermitian manifolds,
we first apply some basic results about geodesics of a linear connection (cf. \cite{kobayashi1996diff1}) to obtain the corresponding results for the canonical connection of a pseudo-Hermitian manifold.
Let \((M,\theta ,J,h)\) be a pseudo-Hermitian manifold with its canonical connection \(\nabla \).
A smooth curve \(\gamma :[0,l]\rightarrow M\) is called a \(\nabla \)\emph{-geodesic}
if \( \nabla_{\gamma^{\prime }}\gamma^{\prime }=0\) on \([0,l]\).
Since \(\nabla \) preserves \(g_{h,\theta }\),
the length of the tangent vector \(\gamma^{\prime}(t)\) is constant.
A parametrization which makes \(\gamma \) into a geodesic,
if any,
is determined up to an affine transformation of \(t\).
When the parameter is actually arc length,
that is \(|\gamma^{\prime }(t)|=1\),
we say that the geodesic is \emph{normalized}.
For any point \(p\in M\) and a vector \( u\in T_{p}(M)\),
by the ODE theory,
there exists a unique \(\nabla \)-geodesic \( \gamma_{u}(t)\)
such that \(\gamma_{u}(0)=p\) and \(\gamma_{u}^{\prime }(0)=u\) .
The exponential map \(\exp_{p}^{\nabla }:T_{p}(M)\rightarrow M\) of the connection is defined by
\begin{equation*}
  \exp_{p}^{\nabla }(u)=\gamma_{u}(1)
\end{equation*}
for all \(u\in T_{p}(M)\) such that \(1\) lies in the domain of \(\gamma_{u}\).
Clearly \(\widehat{\gamma }(t)=\gamma_{u}(\lambda t)\) with \(\lambda \) a constant
is a geodesic with \(\widehat{\gamma }(0)=p\) and \(\widehat{\gamma }^{\prime }(0)=\lambda u\).
Thus \(\widehat{\gamma }(t)=\gamma_{\lambda u}(t)\).
Consequently we have
\begin{equation*}
  \exp_{p}^{\nabla }(tu)=\gamma_{u}(t)
\end{equation*}
for \(0\leq t\leq 1\).
From Proposition 8.2 in \cite{kobayashi1996diff1},
we know that there exists a neighborhood \(D_{p}\) of the zero vector in \(T_{p}(M)\)
which is mapped diffeomorphically onto a neighborhood \(U_{p}\) of \(p\) in \(M\)
by the exponential map.
Then,
for all \(q\in U_{p}\),
there exists a unique normalized geodesic
\(\gamma :[0,t(q)]\rightarrow U_{p}\subset M\) with \( \gamma (0)=p\),
\(\gamma (t(q))=q\).
The neighborhood \(U_{p}\) is called a normal neighborhood of \(p\).


Now, let us consider a 1-parameter family of \(\nabla \)-geodesics given
by a map \(\beta (t,s):[a,b]\times (-\varepsilon ,\varepsilon )\rightarrow M\)
such that for each fixed \(s\),
\(\beta (t,s)\) is a \(\nabla \)-geodesic.
Set \( T=d\beta (\frac{\partial }{\partial t})\)
and \(V=d\beta (\frac{\partial }{\partial s})\).
By \([\frac{\partial }{\partial t},\frac{\partial }{\partial s}]=0\),
we have
\begin{equation}
  \nabla_{T}V-\nabla_{V}T=T_{\nabla }(T,V)  \label{5.1}
\end{equation}
Thus
\begin{equation}
  \nabla_{T}\nabla_{T}V=\nabla_{T}\nabla_{V}T+\nabla_{T}(T_{\nabla}(T,V)).  \label{5.2}
\end{equation}
From the definition of curvature tensor and the geodesic equation \(\nabla_{T}T=0\),
we find that \(V\) satisfies
\begin{equation}
  \nabla_{T}\nabla_{T}V
  =R(T,V)T
  + (\nabla T_{\nabla })(T,V;T)
  + T_{\nabla}(T,\nabla_{T}V)  \label{5.3}
\end{equation}
where \((\nabla T_{\nabla })(T,V;T)=(\nabla_{T}T_{\nabla })(T,V)\).
A vector field \(V\) satisfying \eqref{5.3} along a geodesic \(\gamma \)
is called a \emph{Jacobi field }of the connection.
Since \eqref{5.3} is an ODE system of second order,
a Jacobi field \(V\) is determined uniquely by \(V(0)\) and \( V^{\prime }(0)\).
We have already shown that the variation field of a \(1\)-parameter family of
\(\nabla \)-geodesics is a Jacobi field.
Conversely,
if \( V \) is a Jacobi field along a geodesic \(\gamma \),
then \(V\) comes from a variation of geodesics too.
To prove the converse,
let \(c(s)\) be a curve from \(p\) such that \(c(0)=\gamma (0)=p\) and \(c^{\prime }(0)=V(0)\).
We extend \( \gamma^{\prime }(0)\),
\(V^{\prime }(0)\) and \(T_{\nabla }(\gamma^{\prime}(0),V(0))\) respectively to
parallel fields \(\gamma^{\prime }(0)_{s}\),
\( V^{\prime }(0)_{s}\) and \(T_{\nabla }(\gamma^{\prime }(0),V(0))_{s}\) along the curve \(c(s)\).
Define a family of geodesics by
\begin{equation*}
  \beta (t,s)
  = \exp_{c(s)}^{\nabla }\left\{ t\left[ \gamma^{\prime}(0)_{s}
      + sV^{\prime }(0)_{s}
      -sT_{\nabla }(\gamma^{\prime }(0),V(0))_{s}\right] \right\} .
\end{equation*}
Then the variation field of \(\beta (t,s)\) is a Jacobi field given by \( \widetilde{V}(t)=d \beta (\frac{\partial }{\partial s})\mid_{s=0}\).
Clearly \(\widetilde{V}(0)=c^{\prime }(0)=V(0)\).
Using \eqref{5.1}, we obtain
\begin{align*}
  \widetilde{V}^{\prime }(0)
  & = \left\{ \nabla_{\frac{\partial }{\partial t}}d \beta  (\frac{\partial }{\partial s})\mid_{s=0}\right\}
    \mid_{t=0} \\
  & = \left\{ \nabla_{\frac{\partial }{\partial s}}\left[ \gamma^{\prime}(0)_{s}
    + sV^{\prime }(0)_{s}-sT_{\nabla }(\gamma^{\prime }(0),V(0))_{s}\right] \right\} \mid_{s=0} \\
  & \quad + T_{\nabla }(\gamma^{\prime }(0),V(0)) \\
  & = V^{\prime }(0).
\end{align*}
Thus \(\widetilde{V}(t)=V(t)\).
In particular,
we see that a Jacobi field \( V(t) \) with \(V(0)=0\) and \(V^{\prime }(0)=v\ \) is given by
\begin{equation}
  V(t)
  =\frac{\partial }{\partial s}\exp_{p}^{\nabla }(t(\gamma^{\prime}(0)
  + sV^{\prime }(0)))|_{s=0}
  =\left( d\exp_{p}^{\nabla }\right)_{t\gamma^{\prime }(0)}(tv).  \label{5.4}
\end{equation}

Let \((M^{2m+1},\theta ,J,h)\) and
\((\widetilde{M}^{2m+1},H^{1,0}(\widetilde{M}),\widetilde{\theta },\widetilde{h})\)
be two pseudo-Hermitian manifolds of dimension \(2m+1\) and
let \(p\in M\) and \(\widetilde{p}\in \widetilde{M}\).
Choose a pseudo-Hermitian linear map
\(\rho :T_{p}(M)\rightarrow T_{\widetilde{p}}(\widetilde{M})\).
Let \(U\subset M\) be a normal neighborhood of \(p\) such that
\(\exp_{\widetilde{p}}^{\widetilde{\nabla }}\) is defined on
\(\rho \circ (\exp_{p}^{\nabla })^{-1}(U)\).
Define a map \(f:U\rightarrow \widetilde{M}\) by
\begin{equation}
  f(q)
  = \exp_{\widetilde{p}}^{\widetilde{\nabla }}\circ
  \rho \circ (\exp_{p}^{\nabla })^{-1}(q), \quad
  \forall q\in U.  \label{5.5}
\end{equation}
For each \(q\in U\),
there exists a unique normalized \(\nabla \)-geodesic \( \gamma :[0,t]\rightarrow M\) with \(\gamma (0)=p\),
\(\gamma (t)=q\).
Denote by \( P_{t}\) the parallel transport along \(\gamma \)
with respect to \(\nabla \) from \(\gamma (0)\) to \(\gamma (t)\).
Clearly
\(P_t:(T_{\gamma (0)}(M),J_{\gamma (0)},g_{h,\theta ;\gamma (0)})
\rightarrow (T_{\gamma (t)}(M),J_{\gamma (t)},g_{h,\theta ;\gamma (t)})\)
is a pseudo-Hermitian linear isometry.
Define \(\phi_{t}:T_{q}(M)\rightarrow T_{f(q)}(\widetilde{M})\) by
\begin{equation}
  \phi_{t}(w)
  =\widetilde{P}_{t}\circ \rho \circ P_{t}^{-1}(w), \quad w\in T_{q}M,   \label{5.6}
\end{equation}
where \(\widetilde{P}_{t}\) is the parallel transport
along the \(\widetilde{\nabla }\)-geodesic
\(\widetilde{\gamma }:[0,t]\rightarrow \widetilde{M}\)
given by \(\widetilde{\gamma }(0)=\widetilde{p}\),
\(\widetilde{\gamma }^{\prime }(0)=\rho (\gamma^{\prime }(0))\).
Since parallel transports with respect to the canonical connections are pseudo-Hermitian linear isometries, 
we conclude that \(\phi_{t}:T_{q}(M)\rightarrow T_{f(q)}(\widetilde{M})\) is a pseudo-Hermitian linear isometry.

\begin{lemma}
  \label{lem:9}                 
  Let \(M\) and \(\widetilde{M}\) be as above and
  let \(\Phi \) and \(\widetilde{\Phi}\)
  be \(k\)-tensor fields on \(M\) and \(\widetilde{M}\) respectively.
  With the notations above,
  if for each \(q\in U\) and all \(X_{1}, \cdots, X_{k}\in T_{q}M\),
  we have
  \begin{equation*}
    \Phi (X_{1}, \cdots, X_{k})=\widetilde{\Phi }(\phi_{t}(X_{1}), \cdots, \phi_{t}(X_{k})),
  \end{equation*}
  then
  \begin{equation*}
    (\nabla_{\gamma^{\prime }}\Phi )(X_{1}, \cdots, X_{k})
    =(\widetilde{\nabla }_{\widetilde{\gamma }^{\prime }}\widetilde{\Phi })
    (\phi_{t}(X_{1}), \cdots, \phi_{t}(X_{k})).
  \end{equation*}
\end{lemma}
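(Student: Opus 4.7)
The plan is to reduce the stated tensorial identity to differentiating a single scalar equality of functions of $s$ along the geodesic, by extending the $X_i$ to parallel vector fields along $\gamma$ and transporting everything to the $\widetilde{M}$-side via $\phi_s$.

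First I would fix $q = \gamma(t)$ and extend each $X_i \in T_q M$ to a vector field $Y_i(s)$ along $\gamma$ by parallel transport, namely
\begin{equation*}
  Y_i(s) = P_s \circ P_t^{-1}(X_i),
\end{equation*}
so that $Y_i(t) = X_i$ and $\nabla_{\gamma'} Y_i \equiv 0$. Analogously, on the $\widetilde{M}$-side I would set
\begin{equation*}
  \widetilde{Y}_i(s) = \widetilde{P}_s \circ \rho \circ P_t^{-1}(X_i),
\end{equation*}
which is a parallel field along $\widetilde{\gamma}$ with $\widetilde{Y}_i(t) = \phi_t(X_i)$. The key observation to verify is the compatibility
\begin{equation*}
  \phi_s(Y_i(s)) = \widetilde{P}_s \circ \rho \circ P_s^{-1} \circ P_s \circ P_t^{-1}(X_i) = \widetilde{P}_s \circ \rho \circ P_t^{-1}(X_i) = \widetilde{Y}_i(s),
\end{equation*}
valid for all $s$ in the domain.

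Next I would invoke the hypothesis at each point $\gamma(s) \in U$ with the vectors $Y_1(s), \ldots, Y_k(s)$, obtaining the scalar identity
\begin{equation*}
  \Phi_{\gamma(s)}(Y_1(s), \ldots, Y_k(s)) = \widetilde{\Phi}_{\widetilde{\gamma}(s)}(\widetilde{Y}_1(s), \ldots, \widetilde{Y}_k(s))
\end{equation*}
as functions of $s$. Differentiating at $s = t$ and applying the defining formula for covariant derivative of tensors, the left-hand side becomes
\begin{equation*}
  (\nabla_{\gamma'(t)}\Phi)(X_1, \ldots, X_k) + \sum_{i=1}^k \Phi(X_1, \ldots, \nabla_{\gamma'} Y_i\big|_{s=t}, \ldots, X_k),
\end{equation*}
and since each $Y_i$ is parallel along $\gamma$ all of the correction terms vanish. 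The same computation on the right eliminates all $\widetilde{\nabla}_{\widetilde{\gamma}'} \widetilde{Y}_i$ terms, leaving exactly $(\widetilde{\nabla}_{\widetilde{\gamma}'}\widetilde{\Phi})(\phi_t(X_1), \ldots, \phi_t(X_k))$.

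There is no serious obstacle here; the content of the argument lies entirely in choosing parallel extensions on both sides so that the pointwise hypothesis can be differentiated as a scalar identity. The only subtlety worth flagging is confirming that $\phi_s$ sends the parallel field $Y_i(s)$ on $M$ to the parallel field $\widetilde{Y}_i(s)$ on $\widetilde{M}$, which is forced by the definition $\phi_s = \widetilde{P}_s \circ \rho \circ P_s^{-1}$ and the group property of parallel transport along a fixed geodesic.
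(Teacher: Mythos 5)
Your proposal is correct and follows essentially the same route as the paper: extend the arguments by parallel transport along the geodesic, note that $\phi_s$ carries the parallel fields on $M$ to the corresponding parallel fields on $\widetilde{M}$, and differentiate the resulting scalar identity, with all Leibniz correction terms vanishing by parallelism. The only cosmetic difference is that you base the parallel extensions at $q=\gamma(t)$ while the paper bases them at $p$.
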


\begin{proof}
  Let \(v_{l}\in T_{p}(M)\) and \(\widetilde{v}_{l}=\rho (v_{l})\).
  Set \( X_{l}(t)=P_{t}(v_{l})\) and \(\widetilde{X}_{l}(t)=P_{t}(\widetilde{v}_{l})\),
  \( l=1,\cdots, k\).
  Then the assumption for \(\Phi \) and \(\widetilde{\Phi }\) is equivalent to
  \begin{equation*}
    \Phi (X_{1}(t), \cdots, X_{k}(t))
    =\widetilde{\Phi }(\widetilde{X}_{1}(t), \cdots, \widetilde{X}_{k}(t)).
  \end{equation*}
  It follows that
  \begin{equation*}
    \frac{d}{dt}[\Phi (X_{1}(t), \cdots, X_{k}(t))]
    =\frac{d}{dt}[\widetilde{\Phi }(\widetilde{X}_{1}(t), \cdots, \widetilde{X}_{k}(t))]
  \end{equation*}
  that is,
  \begin{equation*}
    (\nabla_{\gamma^{\prime }}\Phi )(X_{1}(t), \cdots, X_{k}(t))
    =(\widetilde{\nabla}_{\widetilde{\gamma }^{\prime }}\widetilde{\Phi })
    (\widetilde{X}_{1}(t), \cdots, \widetilde{X}_{k}(t))
  \end{equation*}
  since \(\nabla_{\gamma^{\prime }}X_{l}
  =\widetilde{\nabla }_{\widetilde{\gamma }^{\prime }}\widetilde{X}_{l}=0\),
  \(l=1, \cdots, k\).
  This proves the lemma.
\end{proof}

\begin{theorem}
  \label{thm:10}                
  If for each \(q\in U\) and all \(X,Y,Z,W\in T_{q}M\), we have
  \begin{gather*}
    \langle R(X,Y)Z,W\rangle
    =\langle \widetilde{R}(\phi_{t}(X),\phi_{t}(Y))\phi_{t}(Z),\phi_{t}(W)\rangle, \\
    \langle T_{\nabla }(X,Y),Z\rangle
    =\langle \widetilde{T}_{\widetilde{\nabla }}(\phi_{t}(X),\phi_{t}(Y)),\phi_{t}(Z)\rangle
  \end{gather*}
  then \(f:U\rightarrow f(U)\subset \widetilde{M}\) is a local pseudo-Hermitian isometry
  and \(df_{p}=\rho \).
\end{theorem}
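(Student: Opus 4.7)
The plan is to establish that the differential $df_q$ coincides with $\phi_t$ at every point $q = \gamma(t) \in U$, where $\gamma$ is the unique normalized $\nabla$-geodesic from $p$ to $q$. Since each $\phi_t$ is a pseudo-Hermitian linear isometry and $\phi_0 = \rho$, this identification immediately yields both conclusions of the theorem. The radial direction is handled by the defining formula \eqref{5.5}: the map $f$ sends $\gamma_u(t) = \exp_p^\nabla(tu)$ to $\tilde\gamma_{\rho(u)}(t)$, and differentiating in $t$ gives $df_q(\gamma'(t)) = \tilde\gamma'(t) = \phi_t(\gamma'(t))$.

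For directions transverse to $\gamma'(t)$, I would use the Jacobi field representation \eqref{5.4}: writing $w \in T_q(M)$ as $w = V(t)$, where $V$ is the Jacobi field arising from the variation $\beta(t,s) = \exp_p^\nabla(t(\gamma'(0)+sv))$, the defining composition of $f$ yields $f\circ\beta(t,s) = \exp_{\tilde p}^{\tilde\nabla}(t\rho(\gamma'(0)+sv))$, so that $df_q(V(t)) = \tilde V(t)$, where $\tilde V$ is the Jacobi field along $\tilde\gamma$ with $\tilde V(0) = 0$ and $\tilde V'(0) = \rho(v)$. It then suffices to prove $\tilde V(t) = \phi_t(V(t))$.

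To do so, I would transfer both Jacobi fields to the fixed initial tangent spaces via parallel transport: set $Y(t) := P_t^{-1}V(t) \in T_p(M)$ and $\tilde Y(t) := \tilde P_t^{-1}\tilde V(t) \in T_{\tilde p}(\tilde M)$. Equation \eqref{5.3} then becomes a second-order linear ODE for $Y$ whose coefficients are built from the components of $R$, $T_\nabla$, and $\nabla_{\gamma'} T_\nabla$ along $\gamma$ in the parallel orthonormal frame, and analogously for $\tilde Y$. The hypothesis supplies matching of $R$ and $T_\nabla$ under $\phi_t$ throughout $U$, and Lemma \ref{lem:9} applied to $T_\nabla$ upgrades this to matching of $\nabla_{\gamma'} T_\nabla$ along $\gamma$. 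Under the identification $\rho$, the ODEs for $Y$ and $\rho^{-1}\tilde Y$ therefore coincide and share initial data $(0,v)$, so $\tilde Y(t) = \rho(Y(t))$ by uniqueness. Unwinding the parallel transports gives $\tilde V(t) = \tilde P_t \rho P_t^{-1} V(t) = \phi_t(V(t))$, as required. The main obstacle will be the careful verification that, along the single geodesic $\gamma$, the right-hand side of the Jacobi equation is fully controlled by tensors to which Lemma \ref{lem:9} can be applied; once this is settled, the pseudo-Hermitian linear isometry property of each $\phi_t$ transfers directly to $df_q$, completing the proof.
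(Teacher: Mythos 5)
Your proposal is correct and follows essentially the same route as the paper: both arguments compare Jacobi fields along corresponding geodesics by expressing them in parallel frames (equivalently, pulling back by parallel transport to the initial tangent space), use the curvature/torsion hypotheses together with Lemma \ref{lem:9} to match the coefficients of the Jacobi ODE, and conclude $df_q=\phi_t$ from uniqueness of solutions and the formula \eqref{5.4} relating Jacobi fields vanishing at $t=0$ to the differential of the exponential map. The only cosmetic difference is that you treat the radial direction separately and phrase the comparison via $Y(t)=P_t^{-1}V(t)$, whereas the paper works directly with the component functions $y_i(t)$ in the parallel frame; these are the same computation.
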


\begin{proof}
  For each point \(q\in U\),
  let \(\gamma :[0,l]\rightarrow M\) be a normalized \( \nabla \)-geodesic
  in the normal neighborhood \(U\) with \(\gamma (0)=p\),
  \(\gamma (l)=q\).
  Let \(w\in T_{q}(M)\) be any vector.
  Since \(\exp_{p}^{\nabla }\) is a diffeomorphism
  from \((\exp_{p}^{\nabla })^{-1}(U)\subset T_{p}(M)\) to \( U\),
  we may find a unique
  \(v\in (\exp_{p}^{\nabla })^{-1}(U)\subset T_{p}(M)\simeq T_{l\gamma^{\prime }(0)}(T_{p}(M))\)
  such that \(\left( d\exp_{p}^{\nabla }\right)_{l\gamma^{\prime }(0)}(lv)=w\).
  Consequently,
  according to \eqref{5.4},
  there exists a unique Jacobi field \(V(t)\) along \(\gamma \)
  that satisfies \(V(0)=0\) and \( V(l)=w\).
  Let \(s_{1}, \cdots, s_{2m+1}=\gamma^{\prime }(0)\)
  be an orthonormal basis of \(T_{p}M\) and let \(\eta_{i}(t)\),
  \(i=1, \cdots, 2m+1\),
  be the parallel transport of \(s_{i}\) along \(\gamma \).
  In terms of the frame field \(\{\eta_{i}(t)\}_{i=1}^{2m+1}\),
  we may write
  \begin{equation}
    V(t)=\sum_{i=1}^{2m+1}y_{i}(t)\eta_{i}(t).  \label{5.7}
  \end{equation}
  Using the Jacobi equation \eqref{5.3}, we get
  \begin{align}
    y_j^{\prime \prime}
    & = \sum_{i=1}^{2m+1}\{\langle T_{\nabla}(\eta_{2m+1},\eta_i),\eta_j\rangle y_i^{\prime}
      +\langle R(\eta_{2m+1},\eta_i)\eta_{2m+1},\eta_j\rangle y_i \nonumber \\
    & \quad
      + \langle (\nabla T_{\nabla})(\eta_{2m+1},\eta_i;\eta_{2m+1}),\eta_j\rangle y_i\},
      \label{5.8}
  \end{align}
  for \(j=1, \cdots, 2m+1\).

  Let \(\widetilde{\gamma }:[0,l]\rightarrow \widetilde{M}\)
  be the normalized \( \nabla \)-geodesic with \(\widetilde{\gamma}(0)=\widetilde{p}\),
  \(\widetilde{\gamma }^{\prime }(0)=\rho (\gamma^{\prime }(0))\).
  Set
  \begin{equation*}
    \widetilde{V}(t)=\phi_{t}(V(t)), \quad
    \widetilde{\eta }_{i}(t)=\phi_{t}(\eta_{i}(t)) \quad
    (i=1, \cdots, 2m+1),
  \end{equation*}
  for \(t\in [0,l]\).
  By the linearity of \(\phi_{t}\),
  we obtain from \eqref{5.7} that
  \begin{equation}
    \widetilde{V}(t)=\sum_{i=1}^{2m+1}y_{i}(t)\widetilde{\eta }_{i}(t).
    \label{5.9}
  \end{equation}
  The hypotheses imply
  \begin{align*}
    \langle R(\eta_{2m+1},\eta_{i})\eta_{2m+1},\eta_{j}\rangle
    & = \langle \widetilde{R}(\widetilde{\eta }_{2m+1},\widetilde{\eta }_{i})\widetilde{\eta}_{2m+1},
      \widetilde{\eta }_{j}\rangle , \\
    \langle T_{\nabla }(\eta_{2m+1},\eta_{i}),\eta_{j}\rangle
    & = \langle \widetilde{T}_{\widetilde{\nabla }}(\widetilde{\eta }_{2m+1},\widetilde{\eta}_{i}),
      \widetilde{\eta }_{j}\rangle ,
  \end{align*}
  and thus Lemma \ref{lem:9} gives
  \begin{equation*}
    \langle (\nabla T_{\nabla })(\eta_{2m+1},\eta_{i};\eta_{2m+1}),\eta_{j}\rangle
    =\langle (\widetilde{\nabla }T_{\widetilde{\nabla }})
    (\widetilde{\eta }_{2m+1},\widetilde{\eta }_{i};\widetilde{\eta }_{2m+1}),
    \widetilde{\eta }_{j}\rangle .
  \end{equation*}
  Consequently we have
  \begin{align*}
    y_{j}^{\prime \prime }
    & = \sum_{i=1}^{2m+1}\{
      \langle
      \widetilde{T}_{\widetilde{\nabla }}(\widetilde{\eta }_{2m+1},\widetilde{\eta }_{i}),
      \widetilde{\eta }_{j}
      \rangle y_{i}^{\prime }
      + \langle
      \widetilde{R}(\widetilde{\eta }_{2m+1},\widetilde{\eta }_{i})\widetilde{\eta }_{2m+1},
      \widetilde{\eta }_{j}
      \rangle y_{i} \\
    & \quad +\langle
      (\widetilde{\nabla }T_{\widetilde{\nabla }})
      (\widetilde{\eta }_{2m+1},\widetilde{\eta }_{i};\widetilde{\eta }_{2m+1}),
      \widetilde{\eta }_{j}
      \rangle y_{i}
      \}
  \end{align*}
  for \(j=1, \cdots, 2m+1\).
  It follows that \(\widetilde{V}(t)\) is a Jacobi field along \(\widetilde{\gamma }\)
  with \(\widetilde{V}(0)=0\).
  Since \(\widetilde{P}_{t}^{-1}(\widetilde{V}(t))=\rho \circ P_{t}^{-1}(V(t))\),
  we obtain that \( \widetilde{V}^{\prime }(0)=\rho (V^{\prime }(0))\).
  On the other hand,
  since \( V(t)\) and \(\widetilde{V}(t)\) are Jacobi fields vanishing at \(t=0\),
  we have from \eqref{5.4} that
  \begin{align*}
    V(t)
    & = \biggl( d\exp_{p}^{\nabla }\biggr)_{t\gamma^{\prime}(0)}(tV^{\prime }(0)), \\
    \widetilde{V}(t)
    & = \biggl( d\exp_{\widetilde{p}}^{\widetilde{\nabla }}\biggr)_{t\widetilde{\gamma }^{\prime }(0)}
      (t\widetilde{V}^{\prime }(0)).
  \end{align*}
  Therefore,
  \begin{align*}
    \widetilde{V}(l)
    & = \biggl( d\exp_{\widetilde{p}}^{\widetilde{\nabla }}\biggr)_{l\widetilde{\gamma }^{\prime }(0)}
      (l\widetilde{V}^{\prime }(0)) \\
    & = \biggl( d\exp_{\widetilde{p}}^{\widetilde{\nabla }}\biggr)_{l\widetilde{\gamma }^{\prime }(0)}
      (\rho (lV^{\prime }(0))) \\
    & = \biggl( d\exp_{\widetilde{p}}^{\widetilde{\nabla }}\biggr)_{l\widetilde{\gamma }^{\prime }(0)}
      \circ \rho \circ \biggl( d\exp_{p}^{\nabla }\biggr)_{l\gamma^{\prime }(0)}^{-1}(V(l))
      =df_{q}(V(l)),
  \end{align*}
  that is,
  \begin{equation}
    \phi_{l}(w)=df_{q}(w), \quad w\in T_{q}(M),  \label{5.10}
  \end{equation}
  where \(\phi_{l}\) is the pseudo-Hermitian linear map given by \eqref{5.6}. 
  From \eqref{5.10}, 
  we know that \(df_{q}:T_{q}(M)\rightarrow T_{f(q)}(N)\) is a pseudo-Hermitian linear isometry.
  This proves that
  \(f:U\rightarrow f(U)\) is a local pseudo-Hermitian isometry and \( df_{p}=\rho \).
\end{proof}

Notice that if \(\exp_{p}\) and \(\exp_{\widetilde{p}}\) are diffeomorphisms,
then, under the conditions of Theorem \ref{thm:10},
\(f\) is a global pseudo-Hermitian isometry from \(M\) to $\widetilde{M}$.

\begin{corollary}
  \label{cor:6}                 
  Suppose both \(M^{2m+1}\) and \(\widetilde{M}^{2m+1}\) are of pseudo-K\"{a}hler type
  and \(\rho :T_{p}(M)\rightarrow T_{\widetilde{p}}(\widetilde{M})\)
  is a pseudo-Hermitian linear map with \(\rho (\xi )=\widetilde{\xi }\).
  If for each \(q\in U\) and all \(X,Y,Z,W\in T_{q}M\),
  we have
  \begin{align}
    \langle R(X,Y)Z,W\rangle
    & = \langle \widetilde{R}(\phi_{t}(X),\phi_{t}(Y))\phi_{t}(Z),\phi_{t}(W)\rangle ,  \nonumber \\
    d\theta (X,Y)
    & = d\widetilde{\theta }(\phi_{t}(X),\phi_{t}(Y)), \label{5.11} \\
    \langle \tau (X),Y\rangle
    & = \langle \widetilde{\tau }(\phi_{t}(X)),\phi_{t}(Y)\rangle ,  \nonumber
  \end{align}
  then \(f:U\rightarrow f(U)\subset \widetilde{M}\) is a local pseudo-Hermitian isometry and \(df_{p}=\rho \).
\end{corollary}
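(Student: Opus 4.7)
The plan is to deduce this corollary directly from Theorem \ref{thm:10}. The curvature condition in \eqref{5.11} is already the first hypothesis of that theorem, so the task reduces to producing the torsion identity
\[
\langle T_{\nabla}(X,Y), Z\rangle
= \langle \widetilde{T}_{\widetilde{\nabla}}(\phi_t(X), \phi_t(Y)), \phi_t(Z)\rangle
\]
for all $X,Y,Z \in T_q(M)$, out of the remaining two conditions in \eqref{5.11} together with $\rho(\xi) = \widetilde{\xi}$.

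The key simplification comes from the pseudo-K\"ahler hypothesis on both manifolds. By Theorem \ref{thm:2}, $\Theta^{(2,0)} = \widetilde{\Theta}^{(2,0)} = 0$, so the general torsion expression \eqref{eqn22} collapses on each side to
\[
T_{\nabla}(X,Y) = 2(\theta \wedge \tau)(X,Y) + 2\, d\theta(X,Y)\,\xi,
\]
and analogously on $\widetilde{M}$. Thus only $\theta$, $\tau$, $d\theta$ and $\xi$ enter the torsion. The $\tau$ and $d\theta$ conditions in \eqref{5.11} already match two of these under $\phi_t$, so it remains to check that $\phi_t$ intertwines $\xi$ with $\widetilde{\xi}$ and, consequently, $\theta$ with $\widetilde{\theta}$.

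For the Reeb vector fields, Lemma \ref{lem:2} gives $\nabla \xi = 0$ and $\widetilde{\nabla}\widetilde{\xi} = 0$, so both parallel transports $P_t$ and $\widetilde{P}_t$ appearing in the definition \eqref{5.6} of $\phi_t$ preserve the Reeb vector fields. Combined with $\rho(\xi) = \widetilde{\xi}$, this forces $\phi_t(\xi_q) = \widetilde{\xi}_{f(q)}$ throughout the normal neighborhood $U$. Because $\phi_t$ is a pseudo-Hermitian linear isometry and $\theta(\cdot) = g_{h,\theta}(\xi, \cdot)$ by \eqref{eqn5}, we obtain $\widetilde{\theta}\circ \phi_t = \theta$. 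Substituting into the simplified torsion formulas on both sides and comparing term by term delivers the required identity, and the conclusion then follows from Theorem \ref{thm:10}.

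The main delicacy is the sign choice $\rho(\xi) = \widetilde{\xi}$: a pseudo-Hermitian linear map only forces $\rho(\xi) = \pm\widetilde{\xi}$ in general (cf.\ the discussion surrounding \eqref{5.4-0}), which is why this is stated as a separate hypothesis in the corollary. Once that sign is fixed, the argument is essentially a bookkeeping check made clean by the pseudo-K\"ahler assumption, which is exactly what eliminates the $\Theta^{(2,0)}$ term that would otherwise require a separate hypothesis matching $T_{\nabla}$ and $\widetilde{T}_{\widetilde{\nabla}}$ on $H^{1,0}(M)\times H^{1,0}(M)$.
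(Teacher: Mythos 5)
Your proposal is correct and follows essentially the same route as the paper: reduce to Theorem \ref{thm:10} by using the pseudo-K\"ahler hypothesis (via Theorem \ref{thm:6}) to collapse the torsion \eqref{eqn22} to \(2\theta\wedge\tau+2d\theta(\cdot,\cdot)\xi\), then observe that \(\phi_{t}(\xi_{q})=\widetilde{\xi}_{f(q)}\) (hence \(\widetilde{\theta}\circ\phi_{t}=\theta\)) so that the two remaining conditions in \eqref{5.11} match the torsions term by term. Your added justification that the parallelism \(\nabla\xi=0\) from Lemma \ref{lem:2} is what propagates \(\rho(\xi)=\widetilde{\xi}\) along the parallel transports in \eqref{5.6} is a welcome clarification of a step the paper states tersely, but it is not a different argument.
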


\begin{proof}
  By \eqref{eqn22} and Theorem \ref{thm:6}, we have
  \begin{equation}
    \label{5.12}
    \begin{aligned}
      T_{\nabla }(\cdot ,\cdot )
      & =2\theta \wedge \tau
        + 2d\theta (\cdot ,\cdot )\xi ,  \\
      \widetilde{T}_{\widetilde{\nabla }}(\cdot ,\cdot )
      & =2\widetilde{\theta }\wedge \widetilde{\tau }
        + 2d\widetilde{\theta }(\cdot ,\cdot )\widetilde{\xi}.
    \end{aligned}
  \end{equation}
  Since \(\phi_{t}:T_{q}(M)\rightarrow T_{f(q)}(\widetilde{M})\)
  is a pseudo-Hermitian linear isometry,
  the assumption \(\rho (\xi_{p})=\widetilde{\xi }_{p}\) implies that
  \(\phi_{t}(\xi_{q})=\widetilde{\xi }_{f(q)}\).
  Thus
  \begin{equation*}
    \theta (X)=\widetilde{\theta }(\phi_{t}(X)).
  \end{equation*}
  Combining this with the second and third conditions in \eqref{5.11},
  it is easy to see from \eqref{5.12} that
  \(T_{\nabla }\) and \(\widetilde{T}_{\widetilde{\nabla }}\)
  satisfy the conditions in Theorem \ref{thm:10}.
  Hence this corollary follows immediately from Theorem \ref{thm:10}.
\end{proof}

From Corollary \ref{cor:6},
we know that the pseudo-Hermitian structure of a pseudo-K\"{a}hler manifold
is determined locally by its curvature,
pseudo-Hermitian torsion \(1\)-form and Levi form.
In particular, we have

\begin{theorem}
  \label{thm:11}                
  Let \((M^{2m+1},\theta ,J,L_{\theta })\) and \((\widetilde{M}^{2m+1},\widetilde{\theta },\widetilde{J},L_{\widetilde{\theta }})\) be two strictly pseudoconvex CR manifolds.
  Let \(\rho :(T_{p}(M),g_{\theta _{p}})\rightarrow (T_{\widetilde{p}}(\widetilde{M}),\widetilde{g}_{\widetilde{\theta }_{\widetilde{p}}})\) be a pseudo-Hermitian linear isometry with \(\rho (\xi _{p})=\widetilde{\xi }_{\widetilde{p}}\) for some points \(p\in M\) and \(\widetilde{p}\in \widetilde{M}\).
  Let \(U,f,\phi _{t}\) be as above.
  If for each \(q\in U\),
  and all \(X,Y,Z,W\in T_{q}M\),
  we have
  \begin{align}
    \label{5.13}
    \begin{aligned}
      \langle R(X,Y)Z,W\rangle
      & =\langle \widetilde{R}(\phi_{t}(X),\phi_{t}(Y))\phi_{t}(Z),\phi_{t}(W)\rangle , \\
      \langle \tau (X),Y\rangle
      & =\langle \widetilde{\tau }(\phi_{t}(X)),\phi_{t}(Y)\rangle ,
    \end{aligned}
  \end{align}
  then \(f:U\rightarrow f(U)\subset \widetilde{M}\) is a local pseudo-Hermitian isometry
  and \(df_{p}=\rho \).
\end{theorem}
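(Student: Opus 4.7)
The plan is to deduce Theorem \ref{thm:11} directly from Corollary \ref{cor:6}. By Example \ref{ex:1}, every strictly pseudoconvex CR manifold $(M,\theta,J,L_\theta)$ is pseudo-K\"ahler (since $\Omega_{L_\theta,\theta}=d\theta$ is automatically closed), and its canonical pseudo-Hermitian connection coincides with the Tanaka-Webster connection by the final paragraph of Section \ref{sec:canon-conn}. Hence both $M$ and $\widetilde{M}$ already fit the hypotheses of Corollary \ref{cor:6}. Comparing the conclusion-input of that corollary with \eqref{5.13}, the only ingredient that is not explicitly assumed in Theorem \ref{thm:11} is the middle condition of \eqref{5.11}, namely $d\theta(X,Y)=d\widetilde\theta(\phi_t(X),\phi_t(Y))$. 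Therefore the whole proof reduces to establishing this identity.

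First I would check that $\phi_t$ transports Reeb vectors to Reeb vectors: by Lemma \ref{lem:2} the canonical connections satisfy $\nabla\xi=0$ and $\widetilde\nabla\widetilde\xi=0$, so their parallel transports fix $\xi$ and $\widetilde\xi$ respectively; together with the hypothesis $\rho(\xi_p)=\widetilde\xi_{\widetilde p}$ and the definition \eqref{5.6}, this gives $\phi_t(\xi_q)=\widetilde\xi_{f(q)}$ for every $q\in U$. In particular $\phi_t$ sends $H(M)_q$ onto $H(\widetilde M)_{f(q)}$, and $\phi_t^{\ast}\widetilde\theta=\theta$. Next, by the second condition in \eqref{eqn1-1} we have $i_\xi d\theta=0$ and $i_{\widetilde\xi}d\widetilde\theta=0$, so both $d\theta$ and $d\widetilde\theta$ vanish whenever either argument is vertical; it thus suffices to verify the identity on pairs $X,Y\in H(M)_q$. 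On horizontal vectors the defining relation \eqref{eqn1} of the Levi form (applied with $Y$ replaced by $-J_b Y$ and using $J_b^2=-I$) yields $d\theta(X,Y)=-L_\theta(X,J_b Y)$, and similarly for $\widetilde\theta$. Since $\phi_t$ is by construction a pseudo-Hermitian linear isometry between $(T_q(M),g_\theta,J)$ and $(T_{f(q)}(\widetilde M),\widetilde g_{\widetilde\theta},\widetilde J)$, it preserves both $J_b$ and the restriction of the Webster metric to $H(M)$, which is precisely $L_\theta$. Hence $d\theta(X,Y)=d\widetilde\theta(\phi_t(X),\phi_t(Y))$ on $H(M)_q\times H(M)_q$, and by horizontality on all of $T_q(M)\times T_q(M)$.

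With all three conditions of \eqref{5.11} now in hand, Corollary \ref{cor:6} applies and gives that $f:U\to f(U)\subset \widetilde M$ is a local pseudo-Hermitian isometry with $df_p=\rho$. I do not foresee a real obstacle here; the only point requiring a little care is the verification that $\phi_t$ preserves $\xi$, from which the preservation of $\theta$, of the horizontal distribution, and ultimately of the Levi form and of $d\theta$ all follow mechanically from the fact that $\phi_t$ is pseudo-Hermitian linear isometric by construction.
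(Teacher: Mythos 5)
Your proposal is correct and takes essentially the same route as the paper: both reduce the theorem to Corollary \ref{cor:6} by observing that the middle condition of \eqref{5.11} holds automatically because the canonical (here Tanaka--Webster) connection preserves the Levi form, so that \(\phi_{t}\) preserves \(L_{\theta}\), \(J\) and hence \(d\theta\). You merely spell out the details (the identity \(\phi_{t}(\xi)=\widetilde{\xi}\) and the relation \(d\theta(X,Y)=-L_{\theta}(X,J_{b}Y)\)) that the paper's one-line proof leaves implicit.
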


\begin{proof}
  For pseudo-Hermitian manifolds \((M,\theta ,J,L_{\theta })\) and \((\widetilde{M},\widetilde{\theta },\widetilde{J},L_{\widetilde{\theta }})\),
  their canonical connections \( \nabla \) and \(\widetilde{\nabla }\) are the Tanaka-Webster connections.
  Since the Tanaka-Webster connections preserve the Levi forms (cf.
  \cite{dragomir2007diff}),
  the second condition in \eqref{5.11} holds automatically.
  This proves Theorem \ref{thm:11}.
\end{proof}

From Theorem \ref{thm:11}, we conclude, in particular,
that the pseudo-Hermitian structure of a Sasakian manifold is determined locally by its curvature.
As applications, we have the following two corollaries.
In particular,
Corollary \ref{cor:8} below shows that a Sasakian manifold of constant pseudo-Hermitian sectional curvature
is rich in pseudo-Hermitian isometries.

\begin{corollary}
  \label{cor:7}                 
  Let \((M^{2m+1},\theta ,J,L_{\theta })\) and \((\widetilde{M}^{2m+1},\widetilde{\theta },\widetilde{J},L_{\widetilde{\theta }})\) be Sasakian manifolds
  with the same constant pseudo-Hermitian sectional curvature.
  Let \(p\in M\) and \(\widetilde{p}\in \widetilde{M}\) and
  let \(\{\xi ,e_{1}, \cdots, e_{m}\), \(Je_{1}, \cdots, Je_{m}\} \in T_{p}(M)\) and
  \(\{\widetilde{\xi },\widetilde{e}_{1}, \cdots, \widetilde{e}_{m} \),
  \(\widetilde{J}\widetilde{e}_{1}, \cdots, \widetilde{J}\widetilde{e}_{m}\}
  \in T_{\widetilde{p}}(\widetilde{M})\)
  be arbitrary orthonormal bases.
  Then there exist a neighborhood \(U\subset M\) of \(p\),
  and a neighborhood \( \widetilde{U}\subset \widetilde{M}\) of \(\widetilde{p}\),
  and a pseudo-Hermitian isometry \(f:U\rightarrow \widetilde{U}\)
  such that \( df_{p}(\xi )=\widetilde{\xi }\),
  \(df_{p}(e_{i})=\widetilde{e}_{i}\) and \( df_{p}(Je_{i})=J\widetilde{e}_{i}\) (\(1\leq i\leq m\)).
\end{corollary}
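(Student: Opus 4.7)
The plan is to reduce Corollary \ref{cor:7} to Theorem \ref{thm:11} by constructing an appropriate pseudo-Hermitian linear isometry \(\rho: T_p(M) \to T_{\tilde p}(\tilde M)\) from the two given orthonormal bases and then checking the hypotheses of Theorem \ref{thm:11}. I first define \(\rho\) to be the linear map determined on basis vectors by \(\rho(\xi) = \tilde \xi\), \(\rho(e_i) = \tilde e_i\) and \(\rho(Je_i) = \tilde J \tilde e_i\) for \(i = 1,\dots,m\). Since the source and target bases are orthonormal with respect to \(g_\theta\) and \(\tilde g_{\tilde \theta}\) respectively and \(\rho\) sends each \(Je_i\) to \(\tilde J(\rho(e_i))\), the map \(\rho\) is a pseudo-Hermitian linear isometry satisfying \(\rho(\xi) = \tilde \xi\).

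Next, on a normal neighborhood \(U\) of \(p\) I form the local map \(f: U \to \tilde M\) and the pseudo-Hermitian linear isometries \(\phi_t: T_q(M) \to T_{f(q)}(\tilde M)\) by the formulas \eqref{5.5} and \eqref{5.6}. The goal is then to verify the two identities in \eqref{5.13} of Theorem \ref{thm:11}. The torsion identity is automatic: by Remark \ref{rmk:3}, both Sasakian manifolds have \(\tau = 0\) and \(\tilde\tau = 0\), so both sides of that identity vanish.

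The curvature identity is the heart of the matter, and for this I would invoke Proposition \ref{prp:2}. Since \(M\) and \(\tilde M\) have the same constant pseudo-Hermitian sectional curvature \(c\), that proposition says \(R = R_0\) on \(M\) and \(\tilde R = \tilde R_0\) on \(\tilde M\), where both \(R_0\) and \(\tilde R_0\) are given by the universal algebraic formula \eqref{4.11} built only from the metric, the almost complex structure \(J\), and the horizontal projection \(\pi_H\) (equivalently, from \(g_{h,\theta}\), \(J\) and \(\xi\)). Because \(\nabla \xi = 0\) and \(\rho(\xi) = \tilde \xi\), both parallel translations appearing in the definition \eqref{5.6} of \(\phi_t\) preserve the Reeb vector, so \(\phi_t(\xi_q) = \tilde\xi_{f(q)}\). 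Combined with the fact that \(\phi_t\) is a pseudo-Hermitian linear isometry, this shows that \(\phi_t\) preserves \(g\), \(J\) and \(\pi_H\) pointwise, hence transports \(R_0\) to \(\tilde R_0\) with the same constant \(c\); this is exactly the required curvature identity.

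With both hypotheses of Theorem \ref{thm:11} verified, I would conclude that \(f: U \to f(U) \subset \tilde M\) is a local pseudo-Hermitian isometry with \(df_p = \rho\). Shrinking \(U\) if necessary, \(f\) is a diffeomorphism onto an open neighborhood \(\tilde U\) of \(\tilde p\), and the identifications built into \(\rho\) immediately give \(df_p(\xi) = \tilde\xi\), \(df_p(e_i) = \tilde e_i\) and \(df_p(Je_i) = \tilde J \tilde e_i\). There is no real obstacle here: once Theorem \ref{thm:11} and Proposition \ref{prp:2} are available, the corollary is essentially their direct combination, and the only substantive point is recognizing that constant pseudo-Hermitian sectional curvature collapses the full curvature tensor to the universal form \(R_0\), which is then tautologically preserved by any pseudo-Hermitian linear isometry that sends Reeb to Reeb.
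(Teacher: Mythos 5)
Your proposal is correct and follows essentially the same route as the paper: define \(\rho\) on the given orthonormal bases, use Proposition \ref{prp:2} to reduce both curvature tensors to the universal expression \eqref{4.11}, observe that every term there is preserved by the pseudo-Hermitian linear isometries \(\phi_{t}\) (which send Reeb to Reeb since \(\nabla\xi=0\)), and conclude via Theorem \ref{thm:11}. Your explicit remark that the torsion condition in \eqref{5.13} holds trivially because \(\tau=\widetilde{\tau}=0\) for Sasakian manifolds is a detail the paper leaves implicit, but it is the same argument.
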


\begin{proof}
  By Proposition \ref{prp:2},
  we know that the curvature tensors of \(M\) and \(\widetilde{M}\)
  have the same expression \eqref{4.11}.
  Let \(\rho :T_{p}(M)\rightarrow T_{\widetilde{p}}(\widetilde{M})\)
  be the pseudo-Hermitian linear isometry such that \(df_{p}(\xi )=\widetilde{\xi }\),
  \(df_{p}(e_{i})=\widetilde{e}_{i}\) and
  \( df_{p}(Je_{i})=J\widetilde{e}_{i}\) (\(1\leq i\leq m\)).
  Since the canonical connections \(\nabla \) and \(\widetilde{\nabla }\)
  preserve the pseudo-Hermitian structures,
  each term on the right hand side of \eqref{4.11}
  is invariant under the map \(\phi_{t}\).
  Then this corollary follows immediately from Theorem \ref{thm:11}.
\end{proof}

\begin{corollary}
  \label{cor:8}                 
  Let \((M^{2m+1},\theta ,J,L_{\theta })\) be a Sasakian manifold of
  constant pseudo-Hermitian sectional curvature and
  let \(p\) and \(q\) be any two points of \(M\).
  Let \(\{\xi_{p},e_{1}, \cdots, e_{m}\), \(Je_{1}, \cdots, Je_{m}\}\in T_{p}(M)\) and
  \(\{\xi_{q},\widehat{e}_{1}, \cdots, \widehat{e}_{m},
  J\widehat{e}_{1}, \cdots, J \widehat{e}_{m}\}\in T_{q}(M)\)
  be arbitrary orthonormal bases.
  Then there exist neighborhoods \(U\) of \(p\) and \(\widehat{U}\) of \(q\),
  and a pseudo-Hermitian isometry \(f:U\rightarrow \widehat{U}\) such that \(df_{p}(\xi_{p})=\xi_{q}\),
  \(df_{p}(e_{i})=\widehat{e}_{i}\) and \(df_{p}(Je_{i})=J \widehat{e}_{i}\) (\(1\leq i\leq m\)).
\end{corollary}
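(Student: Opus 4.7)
The plan is to obtain Corollary~\ref{cor:8} as an immediate specialization of Corollary~\ref{cor:7} with $\widetilde{M} = M$. Since $M$ has constant pseudo-Hermitian sectional curvature $c$, the hypothesis of Corollary~\ref{cor:7}, that the two Sasakian manifolds share the same constant pseudo-Hermitian sectional curvature, is trivially met, so the full content of Corollary~\ref{cor:8} is already packaged inside Corollary~\ref{cor:7} once the appropriate linear map between tangent spaces is produced.

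Concretely, I would first exhibit the pseudo-Hermitian linear isometry $\rho : T_p(M) \to T_q(M)$ required as input. Define
\[
\rho(\xi_p) = \xi_q, \quad \rho(e_i) = \widehat{e}_i, \quad \rho(Je_i) = J\widehat{e}_i \qquad (1 \leq i \leq m),
\]
and extend $\mathbb{R}$-linearly. Because $\rho$ sends one $g_{\theta}$-orthonormal basis of $T_p(M)$ to a $g_{\theta}$-orthonormal basis of $T_q(M)$, it is a linear isometry with respect to $g_{\theta}$. Since $J\xi = 0$ at every point and $\rho$ carries the $J$-invariant pairs $(e_i, Je_i)$ to the $J$-invariant pairs $(\widehat{e}_i, J\widehat{e}_i)$, a direct check on basis vectors gives $\rho \circ J_p = J_q \circ \rho$. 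Hence $\rho$ is a pseudo-Hermitian linear isometry satisfying $\rho(\xi_p) = \xi_q$ and realizing the prescribed correspondence of frames.

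Finally, I would apply Corollary~\ref{cor:7} with $(\widetilde{M}, \widetilde{p}) = (M, q)$, the target orthonormal basis $\{\widetilde{\xi}, \widetilde{e}_i, \widetilde{J}\widetilde{e}_i\}$ chosen to be $\{\xi_q, \widehat{e}_i, J\widehat{e}_i\}$, and $\rho$ as above. Its conclusion furnishes a neighborhood $U$ of $p$ and a pseudo-Hermitian isometry $f : U \to \widehat{U} := f(U) \subset M$ with $df_p = \rho$, which is exactly the statement of Corollary~\ref{cor:8}. There is essentially no obstacle beyond verifying that $\rho$ is a well-defined pseudo-Hermitian linear isometry, since all of the geometric content, namely that the curvature tensor of a Sasakian space form has the universal expression $R_0$ given in \eqref{4.11}, that this tensor is invariant under every pseudo-Hermitian linear isometry, and that together with $\tau = 0$ this forces the existence of the desired local isometry via the Cartan-type Theorem~\ref{thm:11}, is already absorbed into the proof of Corollary~\ref{cor:7}.
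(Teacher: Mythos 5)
Your proposal is correct and matches the paper's intent exactly: the paper states Corollary \ref{cor:8} without a separate proof, treating it as the immediate specialization of Corollary \ref{cor:7} to $\widetilde{M}=M$, $\widetilde{p}=q$, with $\rho$ the pseudo-Hermitian linear isometry determined by the two frames. Your explicit verification that $\rho$ is well defined and commutes with $J$ is the only content needed, and it is carried out correctly.
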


A complete simply connected Sasakian manifold with
constant pseudo-Hermitian sectional curvature \(c\) is called a Sasakian space form
and denoted by \( M_{sk}^{2m+1}(c)\).
Under the mixed homothetic transformation of
\(g_{\theta }\) : \(g_{\theta }\mapsto \) \(g_{\mu L_{\theta },\mu \theta }\)
for a positive constant \(\mu \)
(which is called a \(\mathcal{D}\)-homothetic deformation in \cite{tanno1969sasak}),
the pseudo-Hermitian sectional curvature of \((M,g_{\mu L_{\theta},\mu \theta })\)
becomes \(c/\mu^{2}\).
Hence we only need to consider Sasakian space forms: \(M_{sk}^{2m+1}(0)\),
\(M_{sk}^{2m+1}(1)\) and \( M_{sk}^{2m+1}(-1)\).

\begin{example}[Sasakian space forms, cf. \cite{dragomir2007diff}, \cite{boyer2008sasaki}, \cite{blair2002riem}]
  \leavevmode
  \label{ex:3}                  
  \begin{enumerate}[(i)]
  \item The Heisenberg group \(H_{m}= \mathbb{C}^{m}\times \mathbb{R} \ \)is a Lie group with group law:
    \begin{equation*}
      (z,t)\cdot (w,s)
      =\biggl( z+w,t+s+2\operatorname{Im}(\sum_{j=1}^{m}z^{j}\overline{w}^{j})\biggr) ,
    \end{equation*}
    for any \((z,t)\),
    \((w,s)\in \) \( \mathbb{C}^{m}\times \mathbb{R} \).
    Set \(H_{(z,t)}^{1,0}(H_{m})
    =span\{\frac{\partial}{\partial z^j}+\sqrt{-1}\overline{z}^j\frac{\partial}{\partial t}\}_{j=1}^m\),
    where \(\frac{\partial }{\partial z^{j}}
    =\frac{1}{2}(\frac{\partial }{\partial x^{j}}-i \frac{\partial }{\partial y^{j}})\),
    \(z=(z^{1}, \cdots, z^{m})\in \mathbb{C}^{m}\).
    Let
    \begin{equation*}
      \theta =dt+\sqrt{-1}\sum_{j=1}^{m}\bigl( z^{j}d\overline{z}^{j}-\overline{z}^{j}dz^{j}\bigr) .
    \end{equation*}
    Then \((H_{m},\theta ,J,L_{\theta })\) is a Sasakian space form with
    pseudo-Hermitian sectional curvature \(0\).

  \item Let \(i:S^{2m+1}\hookrightarrow \mathbb{C}^{m+1}\)
    be the natural embedding of \((2m+1)\)-sphere.
    Then we have the CR sphere \((S^{2m+1},\theta ,J,L_{\theta })\)
    with the induced pseudo-Hermitian structure \((\theta ,J,L_{\theta })\) from \(\mathbb{C}^{m+1}\). 
    Then \((S^{2m+1},\theta ,J,L_{\theta })\) is a Sasakian space form
    with pseudo-Hermitian sectional curvature \(1\).

  \item Let \(B_{m}\) be the complex unit ball in \( \mathbb{C}^{m}\) with
    the Bergman metric \(ds_{B_{m}}^{2}\) of constant holomorphic sectional curvature \(-1\).
    Let \(\omega \) be the K\"{a}hler form of \( ds_{B_{m}}^{2}\) and
    \(\alpha \) be a \(1\)-form on \(B_{m}\) such that \(\omega =d\alpha\).
    Set \(D^{2m+1}=B\times \mathbb{R} \) and \(\theta =dt+\pi^{\ast }\alpha \),
    where \(\pi :D^{2m+1}\rightarrow B_{m} \) is the natural projection and \(t\in \mathbb{R} \).
    Set \(H(D)=\ker \theta \).
    Define an almost complex structure \(J_{b}\) on \(H(D)\) to be
    the horizontal lift of the almost complex structure \(J_{B_{m}}\) on \(B_{m}\). 
    Then \((D^{2m+1},\theta ,J,L_{\theta })\) is a Sasakian space form
    with pseudo-Hermitian sectional curvature \(-1\).
  \end{enumerate}
\end{example}

In \cite{tanno1969sasak},
Tanno showed that Sasakian space forms are isomorphic to the three examples listed above.
Notice that an isomorphism in \cite{tanno1969sasak} has the same meaning as a pseudo-Hermitian isometry in our terminology.
However,
Tanno's proof for his result used the real analyticity of \(M\) and \( g_{\theta }\).
We shall now give a new proof for this classification result.
The following lemma that is known in Riemannian geometry shows that local isometries have strong rigidity.

\begin{lemma}[cf. \cite{carmo1992riem}]
  \label{lem:11}                
  Let \(f_{i}:M\rightarrow N\), 
  \(i=1,2\), 
  be two local isometries of the (connected) Riemannian manifold \(M\) to the Riemannian manifold \(N\). 
  Suppose that there exists a point \(p\in M\) such that \(f_{1}(p)=f_{2}(p)\) and \((df_{1})_{p}=(df_{2})_{p}\). 
  Then \(f_{1}=f_{2}\).
\end{lemma}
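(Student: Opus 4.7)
The plan is a standard connectedness argument, exploiting the fact that a Riemannian local isometry is uniquely determined on a normal neighborhood by its value and differential at the center. Define
\[
  A = \{ p \in M : f_{1}(p) = f_{2}(p) \text{ and } (df_{1})_{p} = (df_{2})_{p} \}.
\]
By hypothesis $A \neq \emptyset$. Continuity of $f_{i}$ and of $df_{i}$ shows at once that $A$ is closed in $M$. The core step is to prove that $A$ is open; once this is done, connectedness of $M$ forces $A = M$, i.e.\ $f_{1} \equiv f_{2}$.

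To prove openness, fix $p_{0} \in A$ and pick a normal neighborhood $U$ of $p_{0}$ with respect to the Levi-Civita connection of $M$. For each $q \in U$ there is a unique geodesic $\gamma : [0,1] \to U$ with $\gamma(0) = p_{0}$ and $\gamma(1) = q$. Since each $f_{i}$ is a local Riemannian isometry, it maps Levi-Civita geodesics of $M$ to Levi-Civita geodesics of $N$. Hence $f_{1} \circ \gamma$ and $f_{2} \circ \gamma$ are both geodesics in $N$ starting at the common point $f_{1}(p_{0}) = f_{2}(p_{0})$ with the common initial velocity $(df_{1})_{p_{0}}(\gamma'(0)) = (df_{2})_{p_{0}}(\gamma'(0))$. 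Uniqueness of geodesics in $N$ then forces $f_{1} \circ \gamma = f_{2} \circ \gamma$; evaluating at $t = 1$ gives $f_{1}(q) = f_{2}(q)$.

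It remains to show that $(df_{1})_{q} = (df_{2})_{q}$. Given any $v \in T_{q}M$, let $V$ be the parallel vector field along $\gamma$ (with respect to the Levi-Civita connection of $M$) with $V(1) = v$. Because each $f_{i}$ is a Riemannian isometry it intertwines the Levi-Civita connections of $M$ and $N$, so $(df_{i}) V$ is a parallel vector field along $f_{i} \circ \gamma$. At $t = 0$ the two fields $(df_{1}) V$ and $(df_{2}) V$ agree, since $(df_{1})_{p_{0}} = (df_{2})_{p_{0}}$. Combined with $f_{1} \circ \gamma = f_{2} \circ \gamma$, uniqueness of solutions to the parallel-transport ODE yields $(df_{1})(V(t)) = (df_{2})(V(t))$ for all $t \in [0,1]$, and at $t = 1$ this gives $(df_{1})_{q}(v) = (df_{2})_{q}(v)$. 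Thus $q \in A$, so $U \subseteq A$ and $A$ is open.

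There is no substantial obstacle; the only step that needs a line of care is the argument for the differentials, which rests on the fact that an isometry pulls back the Levi-Civita connection of the target to that of the source, and hence preserves parallel transport along geodesics. Everything else follows from the uniqueness of geodesics and of solutions of the parallel-transport equation.
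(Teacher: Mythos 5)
Your argument is correct and is precisely the standard open--closed connectedness proof that the paper itself does not reproduce but delegates to the cited reference (do Carmo's \emph{Riemannian Geometry}), where the lemma is proved in exactly this way: geodesic uniqueness gives agreement of values on a normal neighborhood, and preservation of parallel transport gives agreement of differentials. No gaps; the one point needing care (that equality of differentials is only meaningful, and closed, on the locus where the values already agree) is handled implicitly and correctly by your definition of the set $A$.
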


\begin{lemma}[cf. \cite{dong2018comp}]
  \label{lem:10}                
  Let \((M^{2m+1},\theta ,J,L_{\theta })\) be a complete Sasakian manifold with \(K^{H}\leq 0\).
  Then \(\exp_{p}^{\nabla }:T_{p}(M)\rightarrow M\) is a covering map for any \( p\in M\).
  In particular,
  if \(M\) is simply connected,
  then \(\exp_{p}^{\nabla}:T_{p}(M)\rightarrow M\) is diffeomorphism.
\end{lemma}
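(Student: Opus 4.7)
The plan is to establish a Cartan--Hadamard type theorem for the canonical connection $\nabla$: first show that $\exp_p^{\nabla}$ has no critical points by ruling out conjugate points along every $\nabla$-geodesic, and then upgrade this everywhere-local diffeomorphism to a covering map using geodesic completeness. The simply connected case is then immediate.

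For the no-conjugate-points step I would start with the Jacobi equation \eqref{5.3} and exploit parallelism in the Sasakian setting. Since $\tau=0$, Theorem \ref{thm:6} gives $\Theta^{(2,0)}=0$ and $\sigma=0$, so \eqref{eqn22} reduces the torsion to $T_{\nabla}(X,Y)=2\,d\theta(X,Y)\,\xi$; combined with $\nabla\xi=0$, $\nabla\theta=0$, $\nabla J=0$ and $\nabla L_{\theta}=0$, a direct computation gives $\nabla d\theta=0$, hence $\nabla T_{\nabla}=0$, which kills the $(\nabla T_{\nabla})(T,V;T)$ term in \eqref{5.3}. Because $\theta(\gamma')$ is constant along any $\nabla$-geodesic $\gamma$ and $\nabla\xi=0$, I would write $\gamma'(t)=a\xi+X(t)$ with $a\in\mathbb{R}$ constant and $X(t)\in H(M)$ parallel, and decompose a Jacobi field as $V=f\xi+W$ with $W$ horizontal.

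Using \eqref{4.8} (all curvature quantities with a vertical slot vanish), $R(T,V)T$ collapses to the purely horizontal $R(X,W)X$, while $T_{\nabla}(T,\nabla_T V)$ collapses to $2\,d\theta(X,\nabla_T W)\xi$ since $T_\nabla(\xi,\cdot)=\tau=0$ and $T_\nabla(H,H)\subseteq \mathbb{R}\xi$. Hence the Jacobi equation decouples into a horizontal part $\nabla_T^{2}W=R(X,W)X$ and a vertical scalar part $f''=2\,d\theta(X,\nabla_T W)$. For the horizontal part, the K\"ahler-type symmetries of $R^H$ established just before Theorem \ref{thm:9} and the bound $K^{H}\leq 0$ give $\langle R(X,W)X,W\rangle\geq 0$, so $\tfrac{d^{2}}{dt^{2}}|W|^{2}\geq 0$; a nontrivial horizontal Jacobi field with $W(0)=0$ and $\nabla_T W(0)\neq 0$ therefore satisfies $|W(t)|>0$ for $t>0$. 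If instead $\nabla_T W(0)=0$ then $W\equiv 0$, the scalar equation becomes $f''=0$, so $f(t)=f'(0)\,t$ vanishes only at $t=0$. Either way, no Jacobi field with $V(0)=0$ can vanish again, and $\exp_p^{\nabla}$ is of maximal rank everywhere on $T_p(M)$.

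Finally, completeness of $(M,g_{\theta})$ together with $\nabla g_{\theta}=0$ yields $\nabla$-geodesic completeness, and then the analogue of the Cartan--Hadamard / Ambrose covering theorem for affine connections turns the surjective local diffeomorphism $\exp_p^{\nabla}$ into a covering map; simple connectedness then gives a global diffeomorphism. The main obstacle in this program is the torsion term $T_{\nabla}(T,\nabla_T V)$ in \eqref{5.3}, which is absent in the Riemannian Cartan--Hadamard theorem and which a priori could create conjugate points under a merely horizontal curvature hypothesis. The argument succeeds only because the Sasakian conditions $\tau=0$ and $\Theta^{(2,0)}=0$ force this term to be purely vertical and the parallelism $\nabla T_{\nabla}=0$ eliminates the $(\nabla T_{\nabla})(T,V;T)$ contribution, thereby completely decoupling the horizontal and vertical components of the Jacobi field.
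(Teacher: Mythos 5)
First, note that the paper does not prove Lemma \ref{lem:10} at all: it is imported from \cite{dong2018comp}, so there is no internal proof to compare your attempt against. Judged on its own merits, the conjugate-point half of your argument is correct and is certainly the analytic core of the lemma. On a Sasakian manifold $\tau=0$ and $\Theta^{(2,0)}=0$, so by \eqref{eqn22} the torsion is $T_{\nabla}=2\,d\theta(\cdot,\cdot)\xi$, and it is parallel because $d\theta=g_{\theta}(J\cdot,\cdot)$ while $\nabla g_{\theta}=\nabla J=\nabla\xi=0$; the Jacobi equation \eqref{5.3} then decouples exactly as you describe, the horizontal part gives $\tfrac{d^{2}}{dt^{2}}|W|^{2}=2|\nabla_{T}W|^{2}+2\langle R(X,W)X,W\rangle$ with $\langle R(X,W)X,W\rangle=-K^{H}(\mathrm{span}\{X,W\})\,(|X|^{2}|W|^{2}-\langle X,W\rangle^{2})\geq 0$ by the antisymmetries \eqref{4.7}, and your two cases correctly exclude a nontrivial Jacobi field vanishing twice. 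Together with \eqref{5.4} this makes $\exp_{p}^{\nabla}$ everywhere nonsingular, and your derivation of $\nabla$-geodesic completeness from metric completeness and constant speed is fine.

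The genuine gap is the final sentence, where you pass from ``nonsingular and geodesically complete'' to ``covering map'' by invoking a Cartan--Hadamard/Ambrose theorem ``for affine connections''. For a metric connection with torsion this implication is not automatic, and neither standard proof applies as stated. (i) The Hopf--Rinow route fails at the outset: the radial lines in $T_{p}(M)$ are geodesics of the pulled-back \emph{connection}, not of the pulled-back \emph{metric} $(\exp_{p}^{\nabla})^{\ast}g_{\theta}$, because the torsion $2\,d\theta(\cdot,\cdot)\xi$ is not totally skew and hence $\nabla$-geodesics are not $g_{\theta}$-geodesics; so completeness of the pullback metric, which is what the covering lemma needs, is not established. (ii) The expansiveness route (do Carmo's covering lemma, which requires $|(d\exp_{p}^{\nabla})_{u}(v)|\geq|v|$ relative to the flat metric on $T_{p}(M)$) is actually \emph{false} here: already on the Heisenberg group, where $R=0$, a Jacobi field with $V(0)=0$ has $W(t)=tw$ and $f''=2\,d\theta(X,w)=:2c$, hence $f(t)=f'(0)t+ct^{2}$; choosing $f'(0)=-ct_{0}$ with $c\neq0$ gives $f(t_{0})=0$ and $|V(t_{0})|=t_{0}|w|<t_{0}|V'(0)|$. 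So the vertical--horizontal coupling that you correctly flagged as the ``main obstacle'' is tamed in the conjugate-point analysis but resurfaces in the covering-map step, and closing it requires a dedicated argument (this is precisely what the cited reference \cite{dong2018comp} supplies); as written, your last step does not go through.
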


\begin{remark}
  \label{rmk:5}                 
  Let \(\pi :\widetilde{M}\rightarrow (M,\theta ,J,h)\)
  be a topological covering map into a pseudo-Hermitian manifold.
  There is a unique smooth structure on \(\widetilde{M}\)
  such that \(\pi :\widetilde{M}\rightarrow M\) is a smooth covering map
  (cf. \cite{lee1988einst}).
  Then \((d \pi)_{\widetilde{p}}:T_{\widetilde{p}}(\widetilde{M})\rightarrow T_{p}(M)\)
  is a linear isomorphism for any \(\widetilde{p}\in \widetilde{M}\) and \(p\in M\).
  Set \(\widetilde{J}_{\widetilde{p}}
  = (d \pi)_{\widetilde{p}}^{-1}\circ J_p\circ (d \pi)_{\widetilde{p}}\),
  \(\widetilde{\theta }_{\widetilde{p}}=\pi_{\widetilde{p}}^{\ast }(\theta_{p})\)
  and
  \(\widetilde{h}_{\widetilde{p}}=\pi_{\widetilde{p}}^{\ast }(h_{p})\).
  Hence \((\widetilde{M},\widetilde{\theta }, \widetilde{J},\widetilde{h})\)
  becomes a pseudo-Hermitian manifold with the induced pseudo-Hermitian structure,
  so that \(\pi :\widetilde{M} \rightarrow M\) is a local pseudo-Hermitian isometry.
\end{remark}

\begin{theorem}
  \label{thm:12}                
  Let \((M^{2m+1},\theta ,J,L_{\theta })\) be a complete Sasakian manifold with
  constant pseudo-Hermitian curvature \(K_{\theta }\).
  Then there is a pseudo-Hermitian isometry from the universal covering \(\widetilde{M}\) of \(M\) to
  \begin{enumerate}[(a)]
  \item \(H_{m}\), if \(K_{\theta }=0\);

  \item \(D^{2m+1}\), if \(K_{\theta }=-1\);

  \item \(S^{2m+1}\), if \(K_{\theta }=1\).
  \end{enumerate}
\end{theorem}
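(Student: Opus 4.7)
The plan is to lift to the universal cover and apply the Cartan-type rigidity already established in Theorem \ref{thm:11}. By Remark \ref{rmk:5}, the universal cover $\widetilde{M}$ inherits a complete, simply connected Sasakian structure with the same constant pseudo-Hermitian sectional curvature $K_{\theta}$; each of the three model spaces in Example \ref{ex:3} is likewise a complete, simply connected Sasakian space form with the prescribed curvature. On both sides we have $\tau = 0$ (Sasakian) and, by Proposition \ref{prp:2}, the full curvature tensor equals the universal expression $R_{0}$ of \eqref{4.11} with the same constant $c$.

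Fix a base point $p \in \widetilde{M}$ and a model point $\widetilde{p} \in N$ (where $N \in \{H_{m}, D^{2m+1}, S^{2m+1}\}$), choose a pseudo-Hermitian linear isometry $\rho : T_{p}\widetilde{M} \to T_{\widetilde{p}} N$ with $\rho(\xi) = \widetilde{\xi}$, and consider the map $f$ built from $\rho$ by formula \eqref{5.5}. The parallel transport maps $\phi_{t}$ of \eqref{5.6} are pseudo-Hermitian linear isometries along the relevant geodesics; since $R_{0}$ is constructed purely from $g_{h,\theta}$ and $J$, the curvature hypothesis of Theorem \ref{thm:11} is automatic, and the torsion hypothesis is trivial because $\tau = \widetilde{\tau} = 0$. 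Thus Theorem \ref{thm:11} gives a local pseudo-Hermitian isometry $f$ on a normal neighborhood of $p$ with $df_{p} = \rho$.

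In cases (a) and (b), where $K_{\theta} \in \{0,-1\}$, formula \eqref{4.14} yields $K^{H} \leq 0$ on both $\widetilde{M}$ and $N$, so Lemma \ref{lem:10} makes $\exp_{p}^{\nabla}$ and $\exp_{\widetilde{p}}^{\widetilde{\nabla}}$ global diffeomorphisms. Consequently the right-hand side of \eqref{5.5} defines $f$ on all of $\widetilde{M}$, and by the remark following Theorem \ref{thm:10}, $f$ is a global pseudo-Hermitian isometry onto $N$.

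The main obstacle is case (c), $K_{\theta} = 1$, where the exponential maps to $S^{2m+1}$ fail to be injective, so the formula \eqref{5.5} only produces a local isometry near $p$. The plan is to extend $f$ by analytic continuation along paths: for any $q \in \widetilde{M}$, join $p$ to $q$ by a broken geodesic, and extend the initial local isometry through each successive intermediate point by reapplying Theorem \ref{thm:11} at that point, using Lemma \ref{lem:11} to guarantee that consecutive local extensions agree on the overlap of their domains (since both realize the same $1$-jet at the joining point). Simple connectedness of $\widetilde{M}$ together with the standard monodromy argument shows the extension is path-independent, producing a globally defined smooth map $f : \widetilde{M} \to S^{2m+1}$ whose differential is everywhere a pseudo-Hermitian linear isometry. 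Completeness of $\widetilde{M}$ and compactness of $S^{2m+1}$ then force $f$ to be a covering map, and simple connectedness of $S^{2m+1}$ upgrades $f$ to a pseudo-Hermitian diffeomorphism. The delicate step here is the rigorous monodromy bookkeeping that transplants the classical Cartan--Ambrose--Hicks argument into the pseudo-Hermitian setting, which is precisely what Theorem \ref{thm:11} and Lemma \ref{lem:11} make available.
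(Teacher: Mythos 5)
Your argument for cases (a) and (b) is essentially the paper's own, up to reversing the direction of the map: the paper bases the construction at a point of the model space $\Sigma\in\{H_{m},D^{2m+1}\}$ and produces $f=\exp_{\widetilde p}^{\widetilde\nabla}\circ\rho\circ(\exp_{p}^{\nabla})^{-1}:\Sigma\to\widetilde M$, then inverts it, while you go from $\widetilde M$ to the model; both rest on Lemma \ref{lem:10}, the remark after Theorem \ref{thm:10}, and Corollary \ref{cor:7}/Theorem \ref{thm:11}. In case (c) your route genuinely diverges. The paper again works from the model side: it uses compactness of $S^{2m+1}$ to get a uniform normal radius $R_{0}$, chooses $a=\pi/n<R_{0}$, and extends the local isometry outward over the concentric balls $\overline{B(p;ka)}$ in exactly $n$ steps, gluing with Lemma \ref{lem:11}; it then concludes $f_{n}:S^{2m+1}\to\widetilde M$ is a covering map by compactness of the domain and a diffeomorphism by simple connectedness of $\widetilde M$. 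You instead run a Cartan--Ambrose--Hicks continuation over the simply connected domain $\widetilde M$, invoke monodromy for path-independence, and then use the fact that a local isometry from a complete manifold is a covering map, finishing with simple connectedness of $S^{2m+1}$. Both are correct; the paper's version buys a completely finite, explicit extension process (no homotopy-invariance argument is needed because the domain is the compact model), at the cost of having to invert the map at the end, whereas yours produces the isometry in the stated direction directly but defers the monodromy bookkeeping, which is the genuinely delicate part and is precisely what the paper's concentric-shell construction is designed to avoid. Note also that the uniform lower bound on the normal radius, which the paper extracts from compactness of $S^{2m+1}$, is available in your setup only along each fixed compact path in $\widetilde M$; that is enough for the continuation, but it is the point where your sketch would need the most care if written out in full.
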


\begin{proof}
  We first consider the cases (a) and (b),
  and denote \(H_{m}\) as well as \( D^{2m+1}\) by \(\Sigma \).
  Choose any points \(p\in \Sigma \),
  \(\widetilde{p}\in \widetilde{M}\)
  and a pseudo-Hermitian linear isometry
  \(\rho :T_{p}(\Sigma)\rightarrow T_{\widetilde{p}}(\widetilde{M})\).
  By \eqref{4.14},
  we see that both \(\Sigma \) and \(\widetilde{M}\) have non-positive horizontal sectional curvature.
  According to Lemma \ref{lem:10},
  both \(\exp_{p}^{\nabla }:T_{p}(\Sigma)\rightarrow \Sigma \) and
  \(\exp_{\widetilde{p}}^{\widetilde{\nabla }}:T_{\widetilde{p}}(\widetilde{M})\rightarrow \widetilde{M}\)
  are diffeomorphism.
  It follows from Corollary \ref{cor:7} that
  \(f=\exp_{\widetilde{p}}^{\widetilde{\nabla }}\circ \rho \circ
  (\exp_{p}^{\nabla })^{-1}:\Sigma \rightarrow \widetilde{M}\)
  is a global pseudo-Hermitian isometry from \(\Sigma \) to \( \widetilde{M}\),
  and so \(f^{-1}:\widetilde{M}\rightarrow \Sigma \) is also a pseudo-Hermitian isometry.
  This proves (a) and (b).

  For the case (c), 
  we fix points \(p\in S^{2m+1}\), 
  \(\widetilde{p}\in \widetilde{M}\) and a pseudo-Hermitian linear isometry \(\rho :T_{p}(S^{2m+1})\rightarrow T_{\widetilde{p}}(\widetilde{M})\). 
  Let \(B(x_{0};r)\) denote the metric ball with respect to the canonical metric \(g_{\theta }\) on \(S^{2m+1}\), 
  with center \(x_{0}\in S^{2m+1}\) and radius \(r\). 
  Since \(S^{2m+1}\) is compact, 
  there is a positive number \(R_{0}\) such that for any \(x\in S^{2m+1}\), 
  \(B(x;R_{0})\) is contained in a normal neighborhood of \(x\). 
  Notice that the normal neighborhood is defined with respect to the Tanaka-Webster connection \(\nabla\). 
  Choose a positive integer \(n\) such that \(a:=\pi /n<R_{0}\). 
  Now we define 
  \begin{equation*}
    f_{1}=\exp_{\widetilde{p}}^{\widetilde{\nabla }}\circ \rho \circ (\exp_{p}^{\nabla })^{-1}:\overline{B(p;a)}\rightarrow \widetilde{M}.
  \end{equation*}
  By Corollary \ref{cor:7}, 
  \(f_{1}\) is a local pseudo-Hermitian isometry. 
  For any point \(x\in \partial B(p;a)\), 
  set \(\widetilde{x}=f_{1}(x)\) and \(\rho_{x}=(df_{1})_{x}\). 
  Similarly, 
  we define the following map 
  \begin{equation*}
    h_{x}=\exp_{\widetilde{x}}^{\widetilde{\nabla }}\circ \rho_{x}\circ (\exp_{x}^{\nabla })^{-1}:\overline{B(x;a)}\rightarrow \widetilde{M}
  \end{equation*}
  which is also a local pseudo-Hermitian isometry. 
  This gives a family of local pseudo-Hermitian isometries \(\{h_{x}:x\in \partial B(p;a)\}\). 
  Clearly \(h_{x}(x)=\widetilde{x}=f_{1}(x)\) and \((dh_{x})_{x}=\rho_{x}=(df_{1})_{x}\). 
  It follows from Lemma \ref{lem:11} that \(f_{1}=h_{x}\) on \(B(p;a)\cap B(x;a)\). 
  If \(B(x;a)\cap B(y;a)\neq \varnothing\) for any two points \(x,y\in \partial B(p;a)\), 
  it is easy to see that \(h_{x}(q)=h_{y}(q)=f_{1}(q)\) and \((dh_{x})_{q}=(dh_{y})_{q}=(df_{1})_{q}\) for any \(q\in B(p;a)\cap B(x;a)\cap B(y;a)\). 
  Then Lemma \ref{lem:11} implies that \(h_{x}=h_{y}\) on \(B(x;a)\cap B(y;a)\). 
  Therefore we can define a map $f_{2}:\overline{B(p;2a)}\rightarrow \widetilde{M}$ by
  \begin{equation*}
    f_{2}(q)=
    \begin{cases}
      f_{1}(q), & \text{if } \ q\in \overline{B(p;a)} \\ 
      h_{x}(q), & \text{if } \ q\in \overline{B(x;a)} \text{ for some }x\in \partial B(p;a)
    \end{cases}
  \end{equation*}
  which is a local pseudo-Hermitian isometry. 
  We can carry this process on inductively, 
  obtaining \(f_{i+1}\) from \(f_{i}\), 
  \(i=1,...,n-1\). 
  Finally, 
  we get a global map \(f_{n}:S^{2m+1}\rightarrow \widetilde{M}\). 
  Clearly \(f_{n}\) is a local pseudo-Hermitian isometry, 
  and thus a local diffeomorphism. 
  By the compactness of \(S^{2m+1}\), 
  \(f_{n}\) is a covering map. 
  Since \(\widetilde{M}\) is simply connected, 
  we conclude that \(f_{n}:S^{2m+1}\rightarrow \widetilde{M}\) is a diffeomorphism, 
  and therefore a global pseudo-Hermitian isometry.
\end{proof}

\begin{remark}
  The arguments for Cartan-type theorems on pseudo-Hermitian manifolds in this section
  are modifications of those for Cartan's theorem on Riemannian manifolds in \cite{carmo1992riem}.
\end{remark}

It would be interesting to investigae geometric analysis problems on a pseudo-Hermitian manifold \((M,\theta ,J,h)\). 
We will consider some of these problems in future work. 
Before concluding this paper, 
we would like to discuss the following question related to the final section: A complete simply connected pseudo-K\"{a}hler manifold with \(\tau =0\) and constant pseudo-Hermtian sectional curvature is called a \emph{pseudo-K\"{a}hler space form}. 
Obviously, 
a mixed homothetic transformation (defined in \S \ref{sec:pseudo-herm-struct}) tranforms a pseudo-K\"{a}hler space form into another pseudo-K\"{a}hler space form. 
Consequently, 
there are examples of pseudo-K\"{a}hler space forms obtained from Sasakian space forms through mixed homothetic transformations. 
Are there any other examples of pseudo-K\"{a}hler space forms ? If there are such examples, 
can we give a classification of them?

\printbibliography

Yuxin Dong

School of Mathematical Sciences

Fudan University

Shanghai, 200433, P. R. China

yxdong@fudan.edu.cn

\vspace{20pt}

Yibin Ren

School of Mathematical Sciences

Zhejiang Normal University

Jinhua 321004, P.R. China

allenryb@outlook.com

\end{document}